\documentclass[a4paper, 11pt, headings = small, abstract]{scrartcl}
\usepackage[english]{babel}

\usepackage{amscd,amsmath,amsthm,array,bbm,hhline,mathrsfs, enumerate,dsfont, booktabs,fancybox,calc,textcomp,xcolor,graphicx,bbm,xspace,nicefrac,stmaryrd,url,arcs,listings,nameref}
\usepackage[theoremfont]{newpxtext}
\usepackage[vvarbb, upint, bigdelims]{newpxmath}
\usepackage[scr=boondoxupr, scrscaled = 1.05, cal = euler, calscaled =1.05]{mathalpha}
\usepackage[scaled=0.95]{inconsolata}
%
\DeclareMathAlphabet{\mathsf}{OT1}{\sfdefault}{m}{n}
\SetMathAlphabet{\mathsf}{bold}{OT1}{\sfdefault}{b}{n}
\usepackage[utf8]{inputenc}

\usepackage[shortlabels]{enumitem}
\usepackage{etoolbox}
\usepackage[normalem]{ulem}
\usepackage{mathtools}
\usepackage{geometry}\geometry{margin=25mm}
\usepackage{float}

\usepackage{bm}
\usepackage{tikz}
\usepackage[outdir =./]{epstopdf}
\usepackage[citestyle = numeric-comp,bibstyle = numeric, firstinits=true, natbib = true, backend = bibtex,maxbibnames=99,url=false,doi=false]{biblatex}
\usepackage{authblk}
\numberwithin{equation}{section}

\usepackage{chngcntr}
\counterwithin{figure}{section}

\usepackage{xcolor}
\usepackage[colorlinks=true, allcolors=myteal]{hyperref}
\definecolor{WIMgreen}{RGB}{60 134 132}
\definecolor{red_pers}{RGB}{204 37 41}
\definecolor{UMblue}{RGB}{4 47 86}
\definecolor{myteal}{RGB}{0 123 137}
\definecolor{nd}{RGB}{0 0 0}
\definecolor{dartmouthgreen}{rgb}{0.05, 0.5, 0.06}\definecolor{cobalt}{rgb}{0.0, 0.28, 0.67}\definecolor{coolblack}{rgb}{0.0, 0.18, 0.39}
\definecolor{glaucous}{rgb}{0.38, 0.51, 0.71}\definecolor{hooker\'sgreen}{rgb}{0.0, 0.44, 0.0}\definecolor{lemonchiffon}{rgb}{1.0, 0.98, 0.8}\definecolor{oucrimsonred}{rgb}{0.6, 0.0, 0.0}\definecolor{radicalred}{rgb}{1.0, 0.21, 0.37}\definecolor{raspberry}{rgb}{0.89, 0.04, 0.36}\definecolor{royalazure}{rgb}{0.0, 0.22, 0.66}
\definecolor{dex}{rgb}{0.05,0.5,0.06}

\theoremstyle{plain}
\newtheorem{theorem}{Theorem}[section]
\newtheorem{proposition}[theorem]{Proposition}
\newtheorem{lemma}[theorem]{Lemma}
\newtheorem{corollary}[theorem]{Corollary}

\theoremstyle{definition}
\newtheorem{definition}[theorem]{Definition}
\theoremstyle{assumption}

\theoremstyle{remark}
\newtheorem{remark}[theorem]{Remark}

\SetLabelAlign{center}{\hss#1\hss}


\def\supp{\operatorname{supp}}

\def\TV{\operatorname{TV}}

\def\E{\mathbb{E}}

\def\N{\mathbb{N}}
\def\P{\mathbb{P}}
\def\N{\mathbb{N}}

\def\R{\mathbb{R}}
\def\S{\mathcal{S}}\definecolor{darkred}{rgb}{0,0.6,0}
\def\X{\mathbf{X}}

\def\cR{\mathcal{R}}

\def\cN{\mathcal{N}}

\def\cS{\mathcal{S}}

\def\Var{\mathrm{Var}}

\newcommand{\cB}{\mathcal{B}}

\newcommand{\cE}{\mathcal{E}}

\newcommand{\cG}{\mathcal{G}}

\newcommand{\cL}{\mathcal{L}}

\newcommand{\ep}{\varepsilon}
\newcommand{\cO}{\mathcal{O}}

\newcommand{\Pro}{\mathbb{P}}
\renewcommand{\c}{^{\operatorname{c}}}

\renewcommand{\hat}{\widehat}
\newcommand{\e}{\mathrm{e}}
\renewcommand{\tilde}{\widetilde}%
\renewcommand{\d}{\mathop{}\!\mathrm{d} }

\newcommand{\qv}[1]{\langle#1\rangle}

\newcommand{\1}{\mathbf{1}}

\restylefloat{table}

\newcommand{\cH}{\mathcal{H}}

\newcommand{\vertiii}[1]{{\left\vert\kern-0.25ex\left\vert\kern-0.25ex\left\vert #1
		\right\vert\kern-0.25ex\right\vert\kern-0.25ex\right\vert}}

\def\bbeta{\boldsymbol{\beta}}

\def\supp{\mathrm{supp}}

\def\h{\boldsymbol{h}}

\let\originalleft\left
\let\originalright\right
\renewcommand{\left}{\mathopen{}\mathclose\bgroup\originalleft}
\renewcommand{\right}{\aftergroup\egroup\originalright}

\definecolor{myteal}{RGB}{0 123 137}
\definecolor{cs}{rgb}{0 0 0}
\definecolor{radicalred}{rgb}{1.0, 0.21, 0.37}
\definecolor{dex}{RGB}{0 0 0}
\allowdisplaybreaks
\bibliography{jumpdrift}
\title{Adaptive nonparametric drift estimation for multivariate jump diffusions under $\sup$-norm risk}
\author{Niklas Dexheimer\thanks{Aarhus University, Department of Mathematics, Ny Munkegade 118, 8000 Aarhus C, Denmark.\newline Email: \href{mailto:dexheimer@math.au.dk}{dexheimer@math.au.dk}
	\newline
	The author gratefully acknowledges financial support of Sapere Aude: DFF-Starting Grant 0165-00061B “Learning diffusion dynamics and strategies for optimal control.”}
}
\begin{document}
\maketitle
\begin{abstract}
We investigate nonparametric drift estimation for multidimensional jump diffusions based on continuous observations. The results are derived under anisotropic smoothness assumptions and the estimators' performance is measured in terms of the $\sup$-norm loss. We present two different Nadaraya--Watson type estimators, which are both shown to achieve the classical nonparametric rate of convergence under varying assumptions on the jump measure. Fully data-driven versions of both estimators are also introduced and shown to attain the same rate of convergence. The results rely on novel uniform moment bounds for empirical processes associated to the investigated jump diffusion, which are of independent interest.
\end{abstract}
\section{Introduction}\label{sec: intro}
Diffusion processes are one of the most fundamental classes of stochastic processes, with their applications ranging from physics and chemistry up to finance or life sciences. Thus a plethora of research has been devoted to their statistical analysis. However, much of this has been focussed on the case of reversible diffusions. \textcolor{dex}{ It is known by Kolmogorov's characterisation, that a stochastic process $\X=(X_t)_{t\geq0}$ satisfying a stochastic differential equation of the form
\[\d X_t= b(X_t)\d t+\d W_t,\quad X_0=\xi, t\geq0, \]
is reversible if, and only if, $b=-\nabla V$ for some suitable potential function $V\colon \R^d\to\R.$ Here $(W_t)_{t\geq0}$ is a $d$-dimensional standard Brownian motion and the random vector $\xi\in\R^d$ is assumed to be independent of $(W_t)_{t\geq0}$.} As the generator of $\X$ is then self-adjoint, this allows the usage of functional inequalities, such as Poincaré or Nash inequalities, for the statistical analysis of $\X$ (see e.g.\ \cite{strauch18,dalrei07,str15}). Apart from the obvious downside of restricting the drift function to be of gradient form, this framework also does not permit the inclusion of jump structures, i.e.\ it does not allow the study of processes $\X=(X_t)_{t\geq0}$ of the form
\begin{equation}\label{eq: jumpdiff intro}
	X_t=\xi+\int_0^t b(X_s)\d s+\int_0^t\sigma(X_s)\d W_s+\int_0^t \int_{\R^d} \gamma(X_{s-})z \tilde{N}(\d s, \d z),\quad t\geq0.
\end{equation}
Here $b\colon \R^d\to\R^d,\sigma\colon\R^d\to\R^{d\times d}, \gamma\colon\R^d\to\R^{d\times d},$  $(W_t)_{t\geq0}$ is as above and $\tilde{N}$ is a compensated Poisson random measure on $[0,\infty)\times \R^d\backslash\{0\}$ with intensity measure $\pi(\d s,\d z)=\d s\otimes \nu(\d z),$ where $\nu$ is a Lévy measure, and $\xi\in\R^d$ is assumed to be independent of $N$ and $(W_t)_{t\geq0}$. 

The focus of this work lies on nonparametric estimation of the drift function $b$ for jump diffusions of the form \eqref{eq: jumpdiff intro}, based on a continuous record of observations. We impose anisotropic Hölder smoothness assumptions and measure the estimation error in the $\sup$-norm, which compared to the $L^2$-risk is generally more demanding as it requires a precise knowledge about the process's concentration behaviour. In order to investigate the arising challenges through the diffusion's discontinuity, we firstly investigate the same Nadaraya--Watson type estimator as employed in the continuous setting in \cite{aeck22}, and show that it attains the classical nonparametric rate of convergence as soon as $\nu$ admits an exponential moment. Building on this we also introduce a more delicate, truncated estimator, which achieves this result as soon as the Lévy-measure admits a third moment.
Furthermore adaptive, i.e.\ fully data-driven, variants of both estimators are introduced and it is proved that they also achieve the classical nonparametric rate of convergence, without requiring any previous knowledge on the drift function. The essential ingredient for all of these results are novel uniform moment bounds for empirical processes associated with the investigated jump diffusion, which are proved through Talagrand's generic chaining device. 

As mentioned above, a similar framework has been investigated for nonreversible continuous diffusion processes in the recent reference \cite{aeck22}. Therein an adaptive estimator is presented, which attains the classical nonparametric rate of convergence, thus achieving minimax optimality. The results also rely on uniform moment bounds, which are proved through a similar technique as introduced in \cite{kindiff} for drift estimation for a class of hypoelliptic diffusions, so-called stochastic damping Hamiltonian systems. As this technique strongly relies on the continuity of the investigated processes, generalising this approach towards jump diffusions of the form \eqref{eq: jumpdiff intro} is nontrivial. For a more detailed discussion see Section \ref{sec: unif mom}. 
Apart from this there exist many other works focussed on nonparametric drift estimation for continuous diffusions (see e.g.\ \cite{str15,dalrei07,dala05}) or parametric drift estimation for jump diffusions (see e.g.\cite{gloteraos18,onlinedrift,dexheimer2022lasso,maiou}), however the literature on nonparametric drift estimation for jump diffusions is rather scarce (for an overview see e.g.\ \cite{park21,schmisser14}). In particular, to the best of our knowledge, no results exist in the multidimensional case or on the $\sup$-norm risk. The most closely related work to ours is given by \cite{schmisser14}, where nonparametric drift estimation for jump diffusions based on discrete observations is investigated. The estimator is obtained by minimising a certain contrast function and also an adaptive version is presented. It is shown that the classical nonparametric rate of convergence is achieved by the estimators as soon as the distance between the observations is small enough. However, only the scalar setting is investigated and the aforementioned result on the rate of convergence is shown for the less involved empirical squared risk. Additionally, due to the different observation structure, the assumptions are more restrictive than in our work, as the reference requires symmetry of the Lévy-measure $\nu$ or the jump coefficient $\gamma$ to be constant, as well as a fourth moment of $\nu$.

The structure of this paper is as follows. In Section \ref{sec: ass} we introduce the mathematical framework of this work including all major assumptions on the investigated jump diffusion $\X$ together with some notation. In Section \ref{sec: unif mom} we then prove the needed uniform moment bounds for empirical processes associated with $\X$ under differing assumptions on the Lévy measure $\nu$. In the following Section \ref{sec: drift est} these results are then applied to bounding the $\sup$-norm risk of two variations of Nadaraya--Watson type estimator for the drift function $b$ under anisotropic Hölder smoothness assumptions on $b$. Section \ref{sec: adap} then contains adaptive extensions of the estimators introduced in the previous segment, which are shown to achieve the same rate of convergence, even if the drift's smoothness is unknown. Lastly, the results of this paper are discussed in Section \ref{sec: dis}.
\section{Preliminaries}\label{sec: ass}
\textcolor{dex}{In this section we introduce the required assumptions for our statistical analysis.}
\subsection{Assumptions}
Throughout the whole paper, we assume that the stochastic integral equation \eqref{eq: jumpdiff intro} admits a unique, non-explosive weak solution, which we denote by $\X=(X_t)_{t\geq0}$. \textcolor{dex}{Additionally we assume that $(\X,(\Pro^x)_{x\in\R^d}))$ is a Borel right Markov process.} The following framework of assumptions will be referred to as ($\mathscr{A}$) for the rest of the paper. 
\begin{enumerate}[leftmargin=*,label=($\mathscr{A}$\arabic*),ref=($\mathscr{A}$\arabic*)]\label{framework}
	\item \label{ass: inv} For any $t>0,$ exists a measurable function $p_t\colon\R^d\times \R^d\mapsto\R_+$, referred to as transition density, such that
	\[P_t(x,B)=\int_B p_t(x,y)\d y, \quad B\in \cB(\R^d),x\in\R^d, \]
	i.e.\ the marginal laws of $\X$ are absolutely continuous. Additionally we assume $\X$ to admit a unique absolutely continuous invariant distribution $\mu$ with density $\rho\colon \R^d\mapsto\R_+$, i.e.\ for any $t>0, B\in\cB(\R^d),$
	\[\Pro^\mu (X_t\in B)\coloneq \int P_t(x, B)\mu(\d x)=\int_{\R^d}\int_B p_t(x,y)\d y\rho(x)\d x=\int_B \rho(x)\d x=\mu(B). \]
	\item 
 There exists a constant $c_0>0,$ such that the following transition density bound holds true:
\[
		\forall t\in (0,1]: \quad \sup_{x,y \in \R^d} p_t(x,y) \leq c_0 t^{-d \slash 2}.\]
	\label{ass: heat}
	\item $\X$ started in the invariant measure $\mu$ is exponentially $\beta$-mixing, i.e.\  \label{cond2:mixing} there exist constants $c_\kappa,\kappa>0$ such that
	\[\int \Vert P_t(x,\cdot)-\mu(\cdot)\Vert_{\TV}\mu(\d x)\leq c_\kappa \exp(-\kappa t), \quad t\geq0, \]
	where $\Vert \cdot\Vert_{\TV}$ denotes the total variation norm.
	\label{ass: mixing}
\end{enumerate}
These assumptions on the investigated process are of course rather abstract, but give an overview of which properties are actually needed for our analysis and also allow our statistical results to be adapted to new probabilistic results. In practice however, one may be more interested in precise conditions on the coefficients $b,\sigma,\gamma$ and the Lévy-measure $\nu,$ than the theoretical framework above. Therefore we introduce the following set of assumptions.
\begin{enumerate}[leftmargin=*,label=($\mathscr{J}$\arabic*),ref=($\mathscr{J}$\arabic*)]
	\item  \label{Lipschitz assumptions}The functions $b,\gamma,\sigma$ are globally Lipschitz continuous, $b$ and $\gamma$ are bounded, and, for $\mathbb{I}_{d\times d}$ denoting the $d\times d$-identity matrix, there exists a constant $c\geq1$ such that \[\forall x\in\R^d\colon c^{-1}\mathbb{I}_{d\times d}\leq \sigma(x)\sigma^\top(x)\leq c\mathbb{I}_{d\times d},\]
	where the ordering is in the sense of Loewner for positive semi-definite matrices.
	\item \label{Kappa Assumptions}$\nu$ is absolutely continuous with respect to the Lebesgue measure and, for an $\alpha\in(0,2)$, \[(x,z) \mapsto \Vert \gamma(x)z\Vert^{d+\alpha}\nu(z)\]
	is bounded and measurable, where, by abuse of notation, we denoted the density of $\nu$ also by $\nu$. Furthermore, 
	if $\alpha=1$, \[\int_{r<\Vert\gamma(x) z\Vert\leq R} \gamma(x)z\,\nu(\mathrm{d}z)=0,\quad\text{ for any } 0<r<R<\infty,\ x\in\R^d.\] 
	\item \label{Ergodicity Assumptions} There exist $\eta_0,\eta_1,\eta_2>0$ such that
	\[\qv{x,b(x)}\leq -\eta_0\Vert x\Vert, \quad \forall x:\Vert x\Vert\geq \eta_1,\quad\text{ and }\quad
	\int_{ \R^d}\Vert z\Vert^2 \e^{\eta_2\Vert z\Vert}\nu(\mathrm{d}z)<\infty.\]
\end{enumerate}
The above assumptions are the same as investigated in Example 4.3 of \cite{dexheimeraihp}, where it is also shown that $\mathscr{A}$ holds as soon as \ref{Lipschitz assumptions},\ref{Kappa Assumptions} and \ref{Ergodicity Assumptions} are fulfilled.
\subsection{Additional Assumptions and Notation}\label{subsec: not}
Throughout the whole paper we let $X_0=\xi\sim\mu,$ i.e.\ $\X$ is stationary and denote $\Pro=\Pro^\mu,\E=\E^\mu$. For a function $f\in L^1(\mu)$ we denote $\mu(f)\coloneq\int f\d \mu$ and we introduce the following notation for the diffusion coefficient $a\colon \R^d\to\R^{d\times d}, x\mapsto\sigma(x) \sigma^\top(x)=a(x).$ Furthermore for $n\in\N$ we denote $[n]\coloneq\{1,\ldots,n\}$. Additionally, we assume the drift $b$ to be locally bounded, and $a,\gamma$ to be globally bounded. Moreover for $x\in\R^d,\ep>0,$ we denote $B(x,\ep)\coloneq\{y\in\R^d\colon \Vert x-y\Vert<\ep \}$ and for a class of functions $\cG$ mapping from $\R^d$ to $\R$ and a function $f\colon\R^d\to\R$ we set $\cG f\coloneq \{fg\colon g\in\cG \}.$ Lastly, we define the $\sup$-norm risk of an estimator $\hat{f}$ of a function $f$ on a domain $D\subset \R^d$ as
\[\cR(\hat{f},f;D)\coloneq \E[\Vert f-\hat{f}\Vert^p_{L^\infty(D)} ]^{1/p}, \quad p\geq 1, \]
where $\Vert \cdot\Vert_{L^\infty(D)}$ denotes the restriction of the $\sup$-norm to $D$. In order to improve the paper's readability all proofs have been deferred to the appendix. Furthermore $c$ always denotes a strictly positive constant, whose value may change from line to line. In contrast, specific constants are denoted with an additional subscript.
\section{Uniform moment bounds}\label{sec: unif mom}
As we intend to bound the $\sup$-norm risk of a Nadaraya--Watson type drift estimator, we are in need of uniform moment bounds over certain classes of functions for empirical processes associated to the investigated jump diffusion $\X$. More precisely, for a bounded, measurable function $g$ on $\R^d$ and $j\in[d]$ we define
\begin{align*}
	\mathbb{I}^j_T(g)&\coloneq \frac{1}{\sqrt{T}}\int_0^T g(X_{s-}) \d X^j_s
	\\&= \mathbb{H}_T(gb^j)+\mathbb{M}^j_T(g)+\mathbb{J}^j_T(g),
\end{align*}
where
\begin{align}
	\mathbb{H}_T(g)&= \frac{1}{\sqrt{T}}\int_0^T g(X_s) \d s,\notag
	\\
	\mathbb{M}^j_T(g)&=\frac{1}{\sqrt{T}}\int_0^T g(X_s)\sum_{k=1}^{d}\sigma_{jk}(X_s) \d W^k_s,\label{def: emp proc}
	\\
	\mathbb{J}^j_T(g)&=\frac{1}{\sqrt{T}}\int_0^T\int_{\R^d} g(X_{s-})\gamma^j(X_{s-})z \tilde{N}(\d s,\d z),\notag
\end{align}
with $\gamma^j$ denoting the $j$-th row of $\gamma,$ and aim at finding a suitable uniform moment bound of the centred version of this functional over a countable class $\cG$ of such functions, i.e.\ for $j\in[d],p\geq1,$ we want to bound
\[\E\Big[\sup_{g\in\cG}\Vert \mathbb{I}^j_T(g)-\sqrt{T}\mu(gb^j)\Vert^p\Big]^{1/p}.\]
By the Minkowski inequality it obviously suffices to find uniform moments bounds for the three summands defined in \eqref{def: emp proc}. A crucial ingredient for doing this will be the generalisation of Talagrand's generic chaining device contained in \cite{dirksen15}, which entails the needed bounds as soon as the investigated stochastic process fulfills a Bernstein type concentration inequality. Regarding $\mathbb{H}_T$ we can employ the results of \cite{dexheimeraihp}, where, as in \cite{viennet}, Berbee's coupling method is applied together with the exponential $\beta$-mixing property to obtain the needed concentration behaviour through the standard Bernstein inequality for independent, identically distributed random variables. Using this result together with Bernstein's inequality for continuous martingales (see, e.g., p.\ 153 in \cite{revuzyor1999}) uniform moment bounds for functionals akin to $\mathbb{M}^j_T$ were shown in \cite{kindiff} and can be derived similarly in the setting of this paper. Before we present this result, recall that for $\ep>0$ the covering number $\cN(\ep,\cG,d)$ of a set $\cG$ with respect to a semi-metric $d$ denotes the smallest number of open balls of $d$-radius $\ep$ needed to cover $\cG$. Furthermore for two measurable, bounded functions $f,g\colon \R^d\to\R$ we define
\[d_{\infty}(f,g)\coloneq \Vert f-g\Vert_\infty,\quad d^p_{L^p(\mu)}(f,g)\coloneq \mu(\vert f-g\vert^p),\quad p\geq1. \] Our uniform moment bounds for $\mathbb{M}^j_T$ then read as follows.
\begin{proposition}\label{prop: cont moment bound}
	Grant assumptions \ref{ass: inv}, \ref{ass: mixing} and let $\cG$ be a countable class of bounded, measurable functions mapping from $\R^d \to\R$. Then for large enough $T,$ it holds for any $p\in[1,\infty)$ and $j\in[d]$ 
	\begin{align*}
		&\E\left[\sup_{g\in\mathcal{G}}\vert \mathbb{M}^j_T\vert^p\right]^{1/p}
		\\
		&\leq C_1\Bigg(\int_0^\infty \log\left(\mathcal{N}(u,\mathcal{G},24\sqrt{\Vert a_{jj}\Vert_{\infty}}((\kappa T)^{-1/4}d_\infty+(\kappa T)^{-1/8}d_{L^4(\mu)})) \right) \d u
		\\
		&\quad +\int_0^\infty\sqrt{\log\left(\mathcal{N}(u,\mathcal{G},\sqrt{6\Vert a_{jj}\Vert_{\infty}} (128^{1/4}(\kappa T)^{-1/8}d_{L^4(\mu)}+d_{L^2(\mu)}))\right) }\d u\Bigg)
		\\&\quad+ 24p\sqrt{\Vert a_{jj}\Vert_\infty}\tilde{c}_1\sup_{g\in\mathcal{G}}\left((\kappa T)^{-1/4}\Vert g\Vert_\infty+(\kappa T)^{-1/8}\Vert g\Vert_{L^4(\mu)}\right)
		\\&\quad+\sqrt{6\Vert a_{jj}\Vert_\infty p}\tilde{c}_2\sup_{g\in\mathcal{G}}\left((128)^{1/4}(\kappa T)^{-1/8}\Vert g\Vert_{L^4(\mu)}+\Vert g\Vert_{L^2(\mu)}\right),
	\end{align*}
 where
\[\tilde{c}_1=4\e^{1/(2\e)}(\sqrt{8\pi}\e^{1/(12p)})^{1/p}\e^{-1},\quad \tilde{c}_2=4\e^{1/(2\e)}(\sqrt{2\pi}\e^{1/(6p)})^{1/p}\e^{-1/2}.  \]
\end{proposition}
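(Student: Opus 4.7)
The plan is to combine Bernstein's inequality for continuous local martingales with the Bernstein-type concentration inequality for additive functionals of $\X$ established in \cite{dexheimeraihp}, and then upgrade this pairwise tail estimate to a uniform moment bound via the generic chaining theorem of \cite{dirksen15}, as indicated by the authors just before the statement.

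First, I would establish a Bernstein-type tail for the single functional $\mathbb{M}^j_T(g)$. The process $M_t^g \coloneqq \int_0^t g(X_s) \sum_{k=1}^d \sigma_{jk}(X_s) \d W^k_s$ is a continuous local martingale with quadratic variation $\langle M^g\rangle_t = \int_0^t g^2(X_s) a_{jj}(X_s) \d s$, and Bernstein's inequality for continuous martingales (see p.\ 153 of \cite{revuzyor1999}) yields, for any $x,v>0$,
\[\Pro(|M_T^g| \geq x,\; \langle M^g\rangle_T \leq v) \leq 2\exp(-x^2/(2v)).\]
The quadratic variation is itself an additive functional of $\X$ to which the mixing-based Bernstein inequality underlying the analysis of $\mathbb{H}_T$ directly applies. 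Using $\|g^2 a_{jj}\|_\infty \leq \|a_{jj}\|_\infty \|g\|_\infty^2$ and $\|g^2 a_{jj}\|_{L^2(\mu)} \leq \|a_{jj}\|_\infty \|g\|_{L^4(\mu)}^2$, one obtains, with probability at least $1-2\e^{-u}$,
\[\langle M^g\rangle_T \leq T\|a_{jj}\|_\infty \|g\|_{L^2(\mu)}^2 + c\,\|a_{jj}\|_\infty\bigl(\sqrt{Tu}\,\|g\|_{L^4(\mu)}^2 + u\|g\|_\infty^2\bigr).\]
Substituting this estimate for $v$ and calibrating $u \asymp x^2/v$ produces, after dividing by $\sqrt{T}$, a Bernstein tail for $\mathbb{M}^j_T(g)$ whose sub-Gaussian variance is proportional to $\|a_{jj}\|_\infty(\|g\|_{L^2(\mu)}^2 + (\kappa T)^{-1/4}\|g\|_{L^4(\mu)}^2)$ and whose sub-exponential range is proportional to $\sqrt{\|a_{jj}\|_\infty}((\kappa T)^{-1/8}\|g\|_{L^4(\mu)} + (\kappa T)^{-1/4}\|g\|_\infty)$; the exponents $1/4$ and $1/8$ appear precisely from how the $u/T$ and $\sqrt{u/T}$ contributions to $\langle M^g\rangle_T/T$ get absorbed into the tail after the calibration. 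Applying the same computation to $g-h$ instead of $g$ gives a Bernstein-type deviation inequality for the increment $\mathbb{M}^j_T(g)-\mathbb{M}^j_T(h) = \mathbb{M}^j_T(g-h)$ with respect to the semi-metrics
\[d_1(g,h) = \sqrt{6\|a_{jj}\|_\infty}\bigl(128^{1/4}(\kappa T)^{-1/8}d_{L^4(\mu)}(g,h) + d_{L^2(\mu)}(g,h)\bigr),\]
\[d_2(g,h) = 24\sqrt{\|a_{jj}\|_\infty}\bigl((\kappa T)^{-1/4}d_\infty(g,h) + (\kappa T)^{-1/8}d_{L^4(\mu)}(g,h)\bigr).\]

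Second, I would invoke the $p$-th moment version of Dirksen's generic chaining theorem for mixed sub-Gaussian/sub-exponential processes \cite{dirksen15}. Given a centred process indexed by $\mathcal{G}$ whose increments admit a Bernstein-type tail with a sub-Gaussian semi-metric $d_1$ and a sub-exponential semi-metric $d_2$, the theorem controls $\E[\sup_{g\in\mathcal{G}}|\mathbb{M}^j_T(g)|^p]^{1/p}$ by the two Dudley-type entropy integrals $\int_0^\infty\sqrt{\log\mathcal{N}(u,\mathcal{G},d_1)}\,\d u$ and $\int_0^\infty\log\mathcal{N}(u,\mathcal{G},d_2)\,\d u$, plus two diameter contributions of order $\sqrt{p}\,\operatorname{diam}(\mathcal{G},d_1)$ and $p\,\operatorname{diam}(\mathcal{G},d_2)$. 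Plugging the metrics from the previous step and bounding the diameters by $\sup_{g\in\mathcal{G}}\|g\|_{L^2(\mu)}$, $\sup_{g\in\mathcal{G}}\|g\|_{L^4(\mu)}$ and $\sup_{g\in\mathcal{G}}\|g\|_\infty$ produces exactly the asserted inequality, with the constants $\tilde c_1, \tilde c_2$ originating from the explicit absolute constants in Dirksen's bound.

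The main obstacle is the first step: precisely tracking the absolute constants through the two-layer Bernstein argument. The tail of the quadratic variation already contains sub-Gaussian and sub-exponential pieces which must be re-absorbed into the tail of $\mathbb{M}^j_T(g)$ in such a way that the numerical prefactors $24$, $\sqrt{6}$ and $128^{1/4}$ come out cleanly after the substitution $u\asymp x^2/v$ and the $1/\sqrt{T}$ normalisation. Once this careful bookkeeping is done, the chaining step is a direct application of \cite{dirksen15} and produces no further surprises.
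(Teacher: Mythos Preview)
Your proposal is correct and follows essentially the same route as the paper: combine Bernstein's inequality for continuous martingales with the mixing-based Bernstein inequality for the quadratic variation (the paper makes the specific choice $m_T=\sqrt{T}/(2\sqrt{\kappa})$ in its Lemma~\ref{lemma: bernstein dirksen form}), then feed the resulting two-scale tail into Theorem~3.5 of \cite{dirksen15}. The only cosmetic difference is that where you speak of ``diameter contributions'', the paper instead keeps the term $2\sup_{g\in\mathcal G}\E[|\mathbb{M}^j_T(g)|^p]^{1/p}$ from Dirksen's theorem and bounds it via (an adaptation of) Lemma~A.2 of \cite{dirksen15} using the already-established single-function tail; this is where the explicit constants $\tilde c_1,\tilde c_2$ come from.
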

For the analysis of the jump part $\mathbb{J}^j_T$ Bernstein's inequality for continuous martingales is naturally not available. For verifying the required concentration behaviour we therefore employ the exponential martingale inequality given in Theorem 2.2 of \cite{applebaum09paper}, which adjusted to the setting of this paper gives for any $T,\alpha,\beta>0$ that
\begin{equation}
\Pro\left(\int_0^T\int \gamma(X_{s-})z\tilde{N}(\d s,\d z)- \frac{1}{\alpha}\int_0^T\int \exp(\alpha \gamma(X_{s-}z))-1-\alpha \gamma(X_{s-})z\nu(\d z)\d s>\beta \right)\leq \exp(-\alpha\beta). \label{eq: exp martingale ineq}
\end{equation}
Clearly, the usefulness of this inequality strongly depends on the Lévy measure $\nu,$ which is why we introduce the following exponential moment assumption.
\begin{enumerate}[leftmargin=*,label=($\mathcal{N}_{\exp}$),ref=($\mathcal{N}_{\exp}$)]
\item There exists a constant $c_{1,\nu}>0,$ such that\label{ass: exp mom}
\[\int_{\Vert x\Vert\geq1} \exp(c_{1,\nu}\Vert x\Vert)\nu(\d x)<\infty. \]
\end{enumerate}
Under the above assumption we are able to prove bounds similar to the ones for the continuous martingale part observed in Proposition \ref{prop: cont moment bound}. 
\begin{proposition}\label{prop: jump moment bound}
	Let everything be given as in Proposition \ref{prop: cont moment bound} and assume \ref{ass: exp mom}. Then there exists a universal constant $C_1>0,$ such that for large enough $T,$ it holds for any $p\in[1,\infty)$ and $j\in[d]$
	\begin{align*}
		&\E\Big[\sup_{g\in\mathcal{G}}\vert \mathbb{J}^j_T(g)\vert^p\Big]^{1/p}
		\\ &\leq C_1\Bigg(\int_0^\infty \log\left(\mathcal{N}(u,\mathcal{G},3T^{-1/8}d_{L^4(\mu)}+3(64c_1\kappa^{-1/2}+1)T^{-1/4}d_\infty) \right) \d u
		\\
		&\quad +\int_0^\infty\sqrt{\log\left(\mathcal{N}(u,\mathcal{G},\sqrt{3}(1+c_1)d_{L^2(\mu)}+\sqrt{3072}c_1\kappa^{-1/4}T^{-1/8}d_{L^4(\mu)}) \right) }\d u\Bigg)
		\\
		&\quad+3p\tilde{c}_1\sup_{g\in\mathcal{G}}\left(T^{-1/8}\Vert g\Vert_{L^4(\mu)}+(64c_1\kappa^{-1/2}+1)T^{-1/4}\Vert g\Vert_\infty  \right)
		\\&\quad+\sqrt{3p}\tilde{c}_2\sup_{g\in\mathcal{G}}\left((1+c_1)\Vert g\Vert_{L^2(\mu)}+32c_1\kappa^{-1/4}T^{-1/8}\Vert g\Vert_{L^4(\mu)}\right),
	\end{align*}
	where $c_1$ is defined in \eqref{eq:  def c_1}. 
\end{proposition}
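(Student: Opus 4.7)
The plan is to adapt the proof strategy of Proposition \ref{prop: cont moment bound}, replacing Bernstein's inequality for continuous martingales with the exponential martingale inequality \eqref{eq: exp martingale ineq}. Concretely, I would verify that for any $g,h\in\cG$ the increments of $\mathbb{J}^j_T$ satisfy a mixed Bernstein-type tail of the form
\[\Pro\bigl(\vert\mathbb{J}^j_T(g)-\mathbb{J}^j_T(h)\vert\geq u\bigr)\leq 2\exp\bigl(-c\min(u^2/v^2(g,h),u/w(g,h))\bigr),\quad u\geq 0,\]
for increment semi-metrics $v$ and $w$ which, up to constants, match the two mixed metrics in the statement. Once this is in place, the generic chaining framework of \cite{dirksen15} for processes with such sub-Gaussian/sub-exponential mixed tails delivers a moment bound in terms of the Talagrand functionals $\gamma_2(\cG,v)$ and $\gamma_1(\cG,w)$, which are in turn dominated by the two Dudley entropy integrals to produce the claimed form.

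The core technical step is establishing this Bernstein tail. Applying \eqref{eq: exp martingale ineq} to the integrand $(g-h)(X_{s-})\gamma^j(X_{s-})z$ with parameter $\alpha>0$, the compensator needs to be controlled by a deterministic quantity. Using the elementary bound $\vert\e^x-1-x\vert\leq \tfrac12 x^2 \e^{\vert x\vert}$ and restricting $\alpha$ so that $\alpha\Vert g-h\Vert_\infty\Vert\gamma\Vert_\infty$ is sufficiently small relative to $c_{1,\nu}$ from \ref{ass: exp mom}, the compensator is dominated by
\[\frac{\alpha}{2}\int_0^T(g-h)^2(X_{s-})\,\Phi_\alpha(X_{s-})\,\d s,\]
where $\Phi_\alpha(x)=\int_{\R^d}\Vert\gamma(x)z\Vert^2\e^{\alpha\Vert\gamma(x)z\Vert}\nu(\d z)$ is bounded thanks to the boundedness of $\gamma$ and \ref{ass: exp mom}. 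This is itself a path integral of a bounded functional of $\X$, and the $\mathbb{H}_T$ concentration bounds of \cite{dexheimeraihp} (as already used in the proof of Proposition \ref{prop: cont moment bound}) allow me to replace it by $T\mu((g-h)^2\Phi_\alpha)$ plus a fluctuation controlled by mixtures of $\Vert(g-h)^2\Phi_\alpha\Vert_{L^4(\mu)}$ and $\Vert(g-h)^2\Phi_\alpha\Vert_\infty$ with scale factors involving the mixing rate $\kappa$ and powers of $T^{-1/8}$, $T^{-1/4}$. Since $\Phi_\alpha$ is bounded, $\mu((g-h)^2\Phi_\alpha)\lesssim \Vert g-h\Vert_{L^2(\mu)}^2$ and analogous dominations hold for the higher norms; Chernoff optimisation in $\alpha$ against the linear term $\alpha u$ then yields the Bernstein-type tail, with $v(g,h)$ collecting the $L^2(\mu)$- and $L^4(\mu)$-pieces and $w(g,h)$ the $L^4(\mu)$- and $L^\infty$-pieces.

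Given the Bernstein tail, Dirksen's theorem produces a bound of the form
\[\E\bigl[\sup_{g\in\cG}\vert\mathbb{J}^j_T(g)\vert^p\bigr]^{1/p}\lesssim \gamma_2(\cG,v)+\gamma_1(\cG,w)+\sqrt{p}\sup_{g\in\cG}v(g,0)+p\sup_{g\in\cG}w(g,0),\]
and bounding the Talagrand functionals by the respective Dudley entropy integrals yields the stated expression. The main obstacle is the careful bookkeeping of constants: the admissible range of $\alpha$ is constrained by \ref{ass: exp mom} and $\Vert g-h\Vert_\infty$, while simultaneously the $\mathbb{H}_T$-fluctuation bound injects the scales $T^{-1/8}$, $T^{-1/4}$ and the mixing rate $\kappa$ into the exponential-tail geometry. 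Reconciling these scales is precisely what forces the combined metrics $d_{L^2(\mu)}\vee T^{-1/8}d_{L^4(\mu)}$ and $T^{-1/8}d_{L^4(\mu)}\vee T^{-1/4}d_\infty$ as stated, and produces the constant $c_1$ absorbing the relevant exponential moment of $\nu$ together with the mixing factor.
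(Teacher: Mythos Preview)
Your proposal is correct and follows essentially the same approach as the paper: apply the exponential martingale inequality \eqref{eq: exp martingale ineq}, use \ref{ass: exp mom} to bound the compensator by a constant times $y^2\int_0^T g(X_s)^2\,\d s$ (the paper packages this as Lemma~\ref{lemma: qv jump bound}, obtaining the constant $c_1$), then invoke the Bernstein-type bound for $\mathbb{H}_T$ (Lemma~\ref{lemma: bernstein dirksen form} with $m_T=\sqrt{T}/(2\sqrt{\kappa})$) to replace the random quadratic variation by its mean plus fluctuations, and finally apply Dirksen's Theorem~3.5. The one point worth noting is that the constraint $y\lesssim c_{1,\nu}/\Vert g\Vert_\infty$ and the Chernoff optimisation are reconciled in the paper by the explicit choice $y_u=u/x_u$ with $x_u\gtrsim T^{1/4}\Vert g\Vert_\infty$, so the constraint is automatically satisfied for large $T$; your sketch correctly identifies this as the main bookkeeping issue.
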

One might be tempted to think that the exponential moment assumption \ref{ass: exp mom} is necessary for $\X$ to be exponentially $\beta$-mixing. However, in \cite{mas04} it was shown that under a certain drift condition Lévy-driven Ornstein--Uhlenbeck processes fulfill \ref{ass: mixing} as soon as the Lévy measure $\nu$ admits a polynomial moment. Thus, we also investigate the setting in which only the following assumption holds true. 
\begin{enumerate}[leftmargin=*,label=($\mathcal{N}_{\mathrm{tr}}$),ref=($\mathcal{N}_{\mathrm{tr}}$)]
	\item The Lévy measure $\nu$ admits a third moment, i.e. \label{ass: th mom}
	\[\nu_3\coloneq \int \Vert x\Vert^3\nu(\d z)<\infty. \]
	Additionally there exist $\gamma_{\min}>0,$ such that $\gamma_{\min} $ is a global lower bound for the smallest singular value of $\gamma$.
\end{enumerate}
The assumption on the smallest singular value of the jump coefficient $\gamma$ is merely a technical condition, which implies that any jump in the noise also leads to a jump of $\X$.
While the stronger assumption \ref{ass: exp mom} allowed us to find uniform moment bounds for $\mathbb{I}^j_T,$ we can still show similar results under \ref{ass: th mom} for a truncated version of $\mathbb{I}^j_T$.  More precisely, for $T,\delta>0,j\in[d]$ we define
\[X^{j,\delta}_T=X^j_T-\sum_{0\leq s\leq T} \Delta X^j_s\1_{(\delta,\infty)}(\Vert \Delta X_s\Vert),\quad \textrm{where}\quad \Delta X^j_s\coloneq X^j_{s}-X^j_{s-},\]
and 
\[\mathbb{I}^{j,\delta}_T(g)\coloneq \frac{1}{\sqrt{T}}\int_0^Tg(X_{s-})\d X^{j,\delta}_s. \]
Then the following decomposition for the centred moment of $\mathbb{I}^{j,\delta}_T$ holds true.
\begin{lemma}\label{prop: trunc decomp}
	Let $\mathcal{G}$ be as in Proposition \ref{prop: cont moment bound} and assume \ref{ass: th mom}. Then it holds for any $j\in[d],T>0,p\geq1,\delta>0,$
	\begin{align*}
		&\E\left[\sup_{g\in\mathcal{G}}\left\vert\mathbb{I}^{j,\delta}_T(g)-\sqrt{T}\mu(gb)\right\vert^p\right]^{1/p}
		\\
		&\leq \E\left[\sup_{g\in\mathcal{G}} \vert \mathbb{H}_T(gb^j)-\sqrt{T}\mu(gb^j)\vert^p\right]^{1/p}+\E\left[\sup_{g\in\mathcal{G}}\vert \mathbb{M}^j_T(g)\vert^p\right]^{1/p}+\E\left[\sup_{g\in\mathcal{G}}\vert \mathbb{J}^{j,\delta,1}_T(g)\vert^p\right]^{1/p}
		\\
		&\quad+\frac{\Vert \gamma\Vert_\infty\delta}{\gamma_{\min}}\E\left[\sup_{g\in\mathcal{G}}\vert \mathbb{J}^{\delta,2}_T(g)\vert^p\right]^{1/p}+\frac{c_2}{\delta^2}\E\left[\sup_{g\in\mathcal{G}} \vert \mathbb{H}_T(\vert g\vert)-\sqrt{T}\mu(\vert g\vert)\vert^p\right]^{1/p}+\frac{c_2\sqrt{T}}{\delta^2}\sup_{g\in\mathcal{G}}\mu(\vert g\vert),
	\end{align*}
	where
	\begin{align*}
		\mathbb{J}^{j,\delta,1}_T(g)&\coloneq \frac{1}{\sqrt{T}}\int_0^T\int_{B(0,\delta/\gamma_{\min})} g(X_{s-})\gamma^j(X_{s-})z \tilde{N}(\d s,\d z),
		\\
		\mathbb{J}^{\delta,2}_T(g)&\coloneq \frac{1}{\sqrt{T}}\int_0^T\int_{\delta/\Vert \gamma\Vert_\infty<\Vert z\Vert\leq \delta/\gamma_{\min}} \vert g(X_{s-})\vert \tilde{N}(\d s,\d z),
		\\
		c_2&\coloneq \nu_3\Vert \gamma\Vert_\infty\left(\frac{\Vert \gamma\Vert_\infty^3}{\gamma_{\min}}+\gamma_{\min}^2\right),\quad \textrm{with}\quad \nu_3=\int \Vert z\Vert ^3\nu(\d z).
	\end{align*}
\end{lemma}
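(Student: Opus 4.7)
The plan is to establish an explicit Lévy--Itô decomposition of the truncated semimartingale $X^{j,\delta}$, then control the two correction terms arising from replacing $\tilde{\mathbb{J}}_T^{j,\delta}$ by $\mathbb{J}_T^{j,\delta,1}$ and from the extra drift induced by removing the large jumps. Since every jump of $\X$ at time $s$ takes the form $\Delta X_s = \gamma(X_{s-})z$ for an atom $(s,z)$ of $N$, the large-jump subtraction can be recast as
\[
  \sum_{0 \leq s \leq T}\Delta X_s^j\, \1_{(\delta,\infty)}(\Vert \Delta X_s\Vert) = \int_0^T\!\!\int_{\R^d} \gamma^j(X_{s-}) z\, \1_{\{\Vert \gamma(X_{s-})z\Vert > \delta\}}\, N(\d s,\d z).
\]
Writing this against $\tilde N = N - \nu\otimes\d s$ and combining with the SDE for $X^j_T$ yields
\[
  X_T^{j,\delta} = \xi^j + \int_0^T b^j(X_s)\,\d s + \int_0^T\!\!\sum_{k=1}^d\sigma_{jk}(X_s)\,\d W_s^k + \int_0^T\!\!\int \gamma^j(X_{s-}) z \1_{\{\Vert \gamma(X_{s-})z\Vert \leq \delta\}}\, \tilde N(\d s,\d z) - \int_0^T\!\!\int \gamma^j(X_{s}) z \1_{\{\Vert \gamma(X_{s})z\Vert > \delta\}}\, \nu(\d z)\,\d s.
\]
Integrating $g(X_{s-})$ against $\d X^{j,\delta}_s$, dividing by $\sqrt T$, and subtracting $\sqrt T \mu(gb^j)$, I arrive at
\[
  \mathbb{I}^{j,\delta}_T(g) - \sqrt T\mu(gb^j) = \bigl(\mathbb{H}_T(gb^j) - \sqrt T\mu(gb^j)\bigr) + \mathbb{M}_T^j(g) + \tilde{\mathbb{J}}_T^{j,\delta}(g) - \mathbb{K}_T^{j,\delta}(g),
\]
where $\tilde{\mathbb{J}}_T^{j,\delta}(g) = T^{-1/2}\int_0^T\int g(X_{s-})\gamma^j(X_{s-})z \1_{\{\Vert \gamma(X_{s-})z\Vert\leq\delta\}}\tilde N(\d s,\d z)$ and $\mathbb{K}_T^{j,\delta}(g) = T^{-1/2}\int_0^T g(X_s)\int \gamma^j(X_s)z \1_{\{\Vert \gamma(X_s)z\Vert > \delta\}}\nu(\d z)\,\d s$.

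Next, I match $\tilde{\mathbb{J}}_T^{j,\delta}(g)$ with $\mathbb{J}_T^{j,\delta,1}(g)$. The two indicators $\1_{\{\Vert \gamma(X_{s-})z\Vert\leq\delta\}}$ and $\1_{B(0,\delta/\gamma_{\min})}(z)$ agree outside the grey-zone annulus $A = \{\delta/\Vert\gamma\Vert_\infty < \Vert z\Vert \leq \delta/\gamma_{\min}\}$, giving
\[
  \tilde{\mathbb{J}}_T^{j,\delta}(g) - \mathbb{J}_T^{j,\delta,1}(g) = -\frac{1}{\sqrt T}\int_0^T\!\!\int_A g(X_{s-})\gamma^j(X_{s-})z\, \1_{\{\Vert \gamma(X_{s-})z\Vert > \delta\}}\, \tilde N(\d s,\d z).
\]
On $A$ the bound $\Vert \gamma^j(X_{s-})z\Vert \leq \Vert\gamma\Vert_\infty \delta/\gamma_{\min}$ holds; writing $\tilde N = N-\nu\otimes\d s$ and using $\vert\int h\,\tilde N\vert\leq \int \vert h\vert\,N + \int\vert h\vert\,\nu\otimes\d s$, I extract this factor and recognise the remaining compensated jump term as $\mathbb{J}_T^{\delta,2}(g)$. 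This produces a contribution $\tfrac{\Vert\gamma\Vert_\infty \delta}{\gamma_{\min}}\vert\mathbb{J}_T^{\delta,2}(g)\vert$ plus a Lebesgue-type compensator residue which, by the Markov-type bounds $\nu(A) \leq \Vert\gamma\Vert_\infty^3\nu_3/\delta^3$ and $\int_A \Vert z\Vert\,\nu(\d z)\leq \Vert\gamma\Vert_\infty^2\nu_3/\delta^2$ available under \ref{ass: th mom}, is of order $\delta^{-2}\mathbb{H}_T(\vert g\vert)$.

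Finally, the same third-moment estimate yields $\Vert\int\gamma^j(x)z\,\1_{\Vert\gamma(x)z\Vert>\delta}\,\nu(\d z)\Vert \leq \Vert\gamma\Vert_\infty^3 \nu_3/\delta^2$ uniformly in $x$, whence $\vert\mathbb{K}_T^{j,\delta}(g)\vert \leq c\delta^{-2}\mathbb{H}_T(\vert g\vert)$. Splitting $\mathbb{H}_T(\vert g\vert) = (\mathbb{H}_T(\vert g\vert) - \sqrt T \mu(\vert g\vert)) + \sqrt T\mu(\vert g\vert)$ in both $\delta^{-2}$-residues, and applying Minkowski's inequality to the resulting six-term decomposition, completes the proof once the constants are collected into $c_2$. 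The main subtlety is the bookkeeping inside the grey-zone annulus $A$, where the two indicator functions can disagree depending on the trajectory; the crucial observation is that the integrand there is uniformly dominated by $\tfrac{\Vert\gamma\Vert_\infty \delta}{\gamma_{\min}}\vert g(X_{s-})\vert$, which is precisely what permits the passage from the awkward $\tilde{\mathbb{J}}_T^{j,\delta} - \mathbb{J}_T^{j,\delta,1}$ difference to the simpler quantity $\mathbb{J}_T^{\delta,2}(g)$ up to the controlled Lebesgue correction above.
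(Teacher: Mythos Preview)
Your argument follows the same route as the paper's: decompose $X^{j,\delta}$ via Lévy--Itô, isolate the small-jump martingale $\mathbb{J}_T^{j,\delta,1}$ by cutting at $\|z\|=\delta/\gamma_{\min}$, control the annulus mismatch through the pointwise bound $|\gamma^j(X_{s-})z|\leq\|\gamma\|_\infty\delta/\gamma_{\min}$ to pass to $\mathbb{J}_T^{\delta,2}$, and absorb the Lebesgue residues using the third-moment estimate. The only organisational difference is that the paper first splits the compensated jump integral at the \emph{deterministic} threshold $\|z\|=\delta/\gamma_{\min}$ and then compares the $N$-atom indicators $\1_{\{\|\Delta N_s\|>\delta/\gamma_{\min}\}}$ and $\1_{\{\|\Delta X_s\|>\delta\}}$ directly, whereas you carry the state-dependent indicator $\1_{\{\|\gamma(X_{s-})z\|\leq\delta\}}$ one step further before comparing. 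One caveat on constants: your triangle-inequality detour $|\int h\,\tilde N|\leq\int|h|\,N+\int|h|\,\nu\,\d s$ picks up an extra factor $2$ in the annulus compensator residue, and your bound on $\mathbb{K}_T^{j,\delta}$ gives $\|\gamma\|_\infty^3\nu_3/\delta^2$ rather than the paper's sharper $\|\gamma\|_\infty\gamma_{\min}^2\nu_3/\delta^2$; consequently the constant you assemble is not literally the stated $c_2$. This is harmless for the downstream rate analysis but means your decomposition proves the lemma only with a larger constant in place of $c_2$.
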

As we can again employ the results of \cite{dexheimeraihp} and Proposition \ref{prop: cont moment bound}, it now suffices to find suitable bounds for  $\mathbb{J}^{j,\delta,1}_T,\mathbb{J}^{\delta,2}_T$ in order to obtain the desired uniform moment bounds for the truncated empirical process. These are given in the following Proposition.
\begin{proposition}\label{prop: trunc chain}
	Let everything be given as in Proposition \ref{prop: cont moment bound}, assume \ref{ass: th mom} and let $0<\delta\leq \alpha T^{1/4}$, for a fixed constant $\alpha>0$. Then for large enough $T$ it holds for any $j\in[d],p\geq 1,$
	\begin{align*}
		&\E\left[\sup_{g\in\mathcal{G}}\vert \mathbb{J}^{j,\delta,1}_T\vert^p\right]^{1/p}
		\\
		&\leq C_1\Bigg(\int_0^\infty \log\left(\mathcal{N}\left(u,\mathcal{G},3\left(T^{-1/8}d_{L^4(\mu)}+T^{-1/4}\left(1+64\cE_\alpha\kappa^{-1/2} \right)d_\infty\right)\right) \right) \d u
		\\
		&\quad +\int_0^\infty\sqrt{\log\left(\mathcal{N}\left(u,\mathcal{G},\sqrt{3}\left((1+\cE_\alpha)d_{L^2(\mu)}+32\cE_\alpha T^{-1/8}\kappa^{-1/4}d_{L^4(\mu)} \right)\right)\right) }\d u\Bigg)
		\\
		&\quad+\sup_{g\in\mathcal{G}}\Bigg(3p\tilde{c}_1\left(T^{-1/8}\Vert g\Vert_{L^4(\mu)}+T^{-1/4}\left(1+64\cE_\alpha\kappa^{-1/2} \right)\Vert g\Vert_{\infty}\right)
		\\&\quad+\sqrt{3p}\tilde{c}_2\left((1+\cE_\alpha)\Vert g\Vert_{L^2(\mu)}+32\cE_\alpha T^{-1/8}\kappa^{-1/4}\Vert g\Vert_{L^4(\mu)} \right) \Bigg),
	\end{align*}
	and
	\begin{align*}
		&\E\left[\sup_{g\in\mathcal{G}}\vert \mathbb{J}^{\delta,2}_T\vert^p\right]^{1/p}
		\\
		&\leq C_1\Bigg(\int_0^\infty \log\left(\mathcal{N}\left(u,\mathcal{G},3\delta^{-1}\left(T^{-1/8}d_{L^4(\mu)}+T^{-1/4}d_\infty\left(1+32\exp(\alpha)\nu_2\Vert \gamma\Vert_\infty^2\kappa^{-1/2}\right) \right)\right) \right) \d u
		\\
		&\quad +\int_0^\infty\sqrt{\log\left(\mathcal{N}\left(u,\mathcal{G},\sqrt{3}\delta^{-1}\left(\left(1+\frac{\nu_2\Vert \gamma\Vert_\infty^2}{2}\exp(\alpha)\right)d_{L^2(\mu)}+16\exp(\alpha)\nu_2\Vert \gamma\Vert_\infty^2\kappa^{-1/4}T^{-1/8}d_{L^4(\mu)}\right)\right)\right) }\d u\Bigg)
		\\
		&\quad+\delta^{-1}\sup_{g\in\mathcal{G}}\Bigg(3p\tilde{c}_1\left(T^{-1/8}\Vert g\Vert_{L^4(\mu)}+T^{-1/4}\Vert g\Vert_\infty\left(1+32\exp(\alpha)\nu_2\Vert \gamma\Vert_\infty^2\kappa^{-1/2}\right) \right)
		\\&\quad+\sqrt{3p}\tilde{c}_2\left(\left(1+\frac{\nu_2\Vert \gamma\Vert_\infty^2}{2}\exp(\alpha)\right)\Vert g\Vert_{L^2(\mu)}+16\exp(\alpha)\nu_2\Vert \gamma\Vert_\infty^2\kappa^{-1/4}T^{-1/8}\Vert g\Vert_{L^4(\mu)}\right) \Bigg),
	\end{align*}
	where
	\[ \cE_\alpha\coloneq\Vert \gamma\Vert_\infty^2\frac{\nu_2\exp(\alpha\Vert \gamma\Vert_\infty \sigma^{-1}_d)}{2}, \quad \nu_2\coloneq \int\Vert z\Vert^2\nu(\d z).\]
\end{proposition}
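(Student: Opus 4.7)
My approach mirrors the strategy behind Proposition \ref{prop: jump moment bound}: I plan to verify a Bernstein-type concentration inequality for each fixed $g\in\mathcal{G}$ and then invoke the generic chaining theorem of \cite{dirksen15} to pass from pointwise concentration to a uniform moment bound. Once the pointwise tail bound has a clean subgaussian/subexponential split, the chaining step is essentially mechanical, and it is the exponents on $\Vert g\Vert_{L^2(\mu)}$, $\Vert g\Vert_{L^4(\mu)}$ and $\Vert g\Vert_\infty$ that resurface as the three metrics $d_{L^2(\mu)}, d_{L^4(\mu)}, d_\infty$ in the final chaining integrals. The essential new ingredient compared to Proposition \ref{prop: jump moment bound} is that the exponential moment assumption \ref{ass: exp mom} is unavailable, so I have to exploit the truncation built into $\mathbb{J}^{j,\delta,1}_T$ and $\mathbb{J}^{\delta,2}_T$ directly.

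For $\mathbb{J}^{j,\delta,1}_T(g)$, I plan to apply the exponential martingale inequality \eqref{eq: exp martingale ineq} to the purely-jump martingale $\int_0^{\cdot}\int_{B(0,\delta/\gamma_{\min})} g(X_{s-})\gamma^j(X_{s-})z\,\tilde N(\d s,\d z)$, whose jumps are bounded in norm by $\Vert g\Vert_\infty\Vert\gamma\Vert_\infty\delta/\gamma_{\min}$. Using $\e^x-1-x\leq \frac{x^2}{2}\e^{\vert x\vert}$ on the compensator term, together with the standing restriction $0<\delta\leq \alpha T^{1/4}$ and a careful tuning of the parameter in \eqref{eq: exp martingale ineq}, the exponential of the truncated increments is controlled by the constant appearing inside $\cE_\alpha$, and the compensator is dominated by $\cE_\alpha\int_0^T g^2(X_s)\,\d s$. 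Replacing the latter by $T\mu(g^2)$ by means of the deviation inequality for $\mathbb{H}_T$ established in \cite{dexheimeraihp} (applicable since $g^2$ is bounded and measurable and $\X$ is exponentially $\beta$-mixing) yields a pointwise Bernstein-type tail of the schematic form
\begin{align*}
\Pro\bigl(\vert\mathbb{J}^{j,\delta,1}_T(g)\vert\geq u\bigr)\leq 2\exp\Bigl(-c\min\bigl\{u^2/V_g^2,\,u/K_g\bigr\}\Bigr),
\end{align*}
where $V_g^2$ depends on $(1+\cE_\alpha)^2\Vert g\Vert_{L^2(\mu)}^2$ plus a $\kappa^{-1/2}T^{-1/4}\Vert g\Vert_{L^4(\mu)}^2$ correction stemming from the $\mathbb{H}_T$-deviation and $K_g$ is of order $T^{-1/8}\Vert g\Vert_{L^4(\mu)}+(1+\cE_\alpha\kappa^{-1/2})T^{-1/4}\Vert g\Vert_\infty$. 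Feeding this tail into the chaining theorem of \cite{dirksen15} produces the first claimed inequality.

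The bound on $\mathbb{J}^{\delta,2}_T(g)$ is obtained by exactly the same recipe, with two small modifications: the integrand is now the nonnegative function $\vert g(X_{s-})\vert$, and the integration region $\delta/\Vert\gamma\Vert_\infty<\Vert z\Vert\leq \delta/\gamma_{\min}$ has $\nu$-mass of order at most $\delta^{-2}\nu_2\Vert\gamma\Vert_\infty^2$ by Markov's inequality. This explains the overall prefactor $\delta^{-1}$ and the factors $\nu_2\Vert\gamma\Vert_\infty^2 \exp(\alpha)$ visible in the statement; everything else proceeds as for $\mathbb{J}^{j,\delta,1}_T$, and Dirksen's theorem again delivers the second inequality after the same subgaussian/subexponential decomposition of the Bernstein tail.

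The main obstacle, and essentially the only nontrivial step, is the precise bookkeeping in the pointwise Bernstein inequality. The parameter in \eqref{eq: exp martingale ineq} has to be chosen jointly with the truncation scale $\delta\leq \alpha T^{1/4}$ so that the exponential factor in the compensator remains $\cO(1)$, and the resulting quadratic-variation bound must be split cleanly into a subgaussian piece driven by $\mu(g^2)$ and a subexponential piece in which the $T^{-1/8}$ correction is paid for by an $L^4$- or $\sup$-norm; only after such a split do the three metrics $d_{L^2(\mu)},d_{L^4(\mu)},d_\infty$ appear in the exact shapes recorded in the statement. All remaining steps rely only on standard jump-Itô calculus and on the $\mathbb{H}_T$-concentration already available from \cite{dexheimeraihp}.
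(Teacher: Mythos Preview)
Your proposal is correct and follows essentially the same route as the paper: apply the exponential martingale inequality \eqref{eq: exp martingale ineq} with the Taylor bound $\e^x-1-x\leq \tfrac{x^2}{2}\e^{|x|}$ on the compensator (this is Lemma \ref{lemma: qv jump bound comp supp}), tune the free parameter so that the truncation $\delta\leq\alpha T^{1/4}$ keeps the exponential factor bounded by the constant in $\cE_\alpha$, replace $\int_0^T g^2(X_s)\d s$ by its mean via the mixing Bernstein inequality (Lemma \ref{lemma: bernstein dirksen form} with $m_T=\sqrt{T}/(2\sqrt{\kappa})$), and then invoke Theorem~3.5 of \cite{dirksen15}; the second bound is handled identically after noting $\nu(\{\Vert z\Vert>\delta/\Vert\gamma\Vert_\infty\})\leq \delta^{-2}\nu_2\Vert\gamma\Vert_\infty^2$. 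The paper carries out exactly this program with the explicit choice $x_u=u(\sqrt{T\mu(g^2)/u}+T^{3/8}\Vert g\Vert_{L^4(\mu)}+T^{1/4}\Vert g\Vert_\infty)$, $y_u=u/x_u$, which makes the bookkeeping you flag as the ``main obstacle'' transparent.
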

Comparable uniform moment bounds for jump processes as in Propositions \ref{prop: jump moment bound} and \ref{prop: trunc chain} have been shown in \cite{wang19} and \cite{vdgeer95}. However, for this \cite{wang19} assumes the jumps to be bounded (see Theorem 1.2 in \cite{wang19}) and \cite{vdgeer95} assumes the jumps to be bounded from below (See Theorem 3.1 in \cite{vdgeer95}). To the best of our knowledge no results under an exponential moment assumption as in Proposition \ref{prop: jump moment bound} were available so far.
 Lastly, denoting $\supp(\cG)\coloneq\bigcup_{g\in\cG}\supp(g),$ one can actually achieve the same results as stated in this section under the less restrictive assumption, that $a$ and $\gamma$ are bounded on $\supp(\cG)$. However, as this does not offer much further insight, we stick to the global boundedness assumptions stated in Section \ref{subsec: not}.

\section{Drift estimation} \label{sec: drift est}
In this section we employ the powerful results of Section \ref{sec: unif mom} to investigate the statistical aim of this work, nonparametric drift estimation for jump diffusions. For the rest of this section $D$ will denote an open, bounded subset of $\R^d$. Furthermore we let $k\colon \R^d\to\R$ denote a symmetric, Lipschitz continuous kernel function with $\supp(k)= [-1/2,1/2]$ of order $\mathsf{k}\in\N,$ i.e.
\[\int k(x)\d x=1,\quad \textrm{and }\forall i\in[\mathsf{k}]: \int k(x)x^i\d x=0. \]
With this we define the following for any multi-index bandwidth $\h=\h(T)=(h_1(T),\ldots,h_d(T))\in(0,1)^d, x\in D,$
\begin{align*}
 K(x)\coloneq \prod_{i=1}^d k(x_i),\quad 	K_{\h}(x)&\coloneq \prod_{i=1}^{d}h_i^{-1} k(x_i/h_i), 
\quad V(\h)\coloneq \prod_{i=1}^d h_i.
\end{align*}
 Furthermore for any $T>0$ we introduce the following class of multi-index bandwidths 
\[ \mathsf{H}_T\coloneq \left\{\h\in(0,1)^d\colon h_i\leq \log(1+T)^{-1}\, \forall i\in[d]\, \mathrm{and}\,  V(\h)\geq T^{-1/2}\log\left(\sum_{i=1}^dh_i^{-1}\right)^4  \right\},\] and the anisotropic Hölder class of functions on $D$.
\begin{definition}\label{def: hölder}
Given $\bbeta=(\beta_1,\ldots,\beta_d), \mathbf{\cL}=(\cL_1,\ldots,\cL_d)\in(0,\infty)^d,$ the anisotropic Hölder class of functions $\cH_D(\bbeta,\mathbf{\cL})$ on $D$ is the set of all functions $f\colon D\to \R$ fulfilling for all $i\in[d],$
\begin{align*}
\Vert D^k_i f\Vert_\infty&\leq \cL_i, \quad \forall k\in[\llfloor \beta_i\rrfloor],
\\
\Vert D^{\llfloor \beta_i\rrfloor}_i f(\cdot +t\mathsf{e}_i)- D^{\llfloor \beta_i\rrfloor}_i f(\cdot)\Vert_\infty &\leq \cL_i\vert t\vert^{\beta_i-\llfloor \beta_i\rrfloor}, \quad \forall t\in\R.
\end{align*}
Here $D^k_i f$ denotes the $k$-th order partial derivative of $f$ with respect to the $i$-th component, $\llfloor \beta\rrfloor$ the largest integer strictly smaller than $\beta$ and $\mathsf{e}_1,\ldots, \mathsf{e}_d$ the canonical basis of $\R^d$. Furthermore we let
\[\bar{\bbeta}\coloneq \frac{d}{\sum_{i=1}^{d}\beta_i^{-1}},\]
denote the harmonic mean of $\bbeta$.
\end{definition}
In order to estimate the drift coefficient $b$ we firstly introduce the following two auxiliary estimators, defined for any $x\in D, \h\in(0,1)^d,j\in[d], \delta>0$ as
\begin{equation}\label{def: aux est}
\bar{b}^{(j)}_{\h,T}(x)\coloneq \frac 1 T \int_0^TK_{\h}(x-X_{s-})\d X^j_s,\quad \bar{b}^{(j),\delta}_{\h,T}(x)\coloneq \frac 1 T \int_0^TK_{\h}(x-X_{s-})\d X^{j,\delta}_s. 
\end{equation}
Note that $\bar{b}^{(j)}_{\h,T}$ is the same estimator as employed for continuous diffusions in \cite{aeck22}, whereas $\bar{b}^{(j),\delta}_{\h,T}$ has been adjusted to the given discountinuous setting.
Recalling the classical decomposition of the $\sup$-norm risk into bias and stochastic error 
\[\mathcal{R}^{(p)}_\infty(b^j\rho,\bar{b}^{(j)}_{\h,T};D)\leq	\mathcal{B}_{b^j\rho}(\h)+\E\left[\sup_{x\in D} \vert \bar{b}^{(j)}_{\h,T}-\mu(b^j\rho)\vert  \right], \]
where the bias is given as
\[\mathcal{B}_{b^j\rho}(\h)\coloneq \sup_{x\in D}\vert (b^j\rho - (b^j\rho)\ast K_{\h})(x)\vert,  \]
we see that the uniform moment bounds of Section \ref{sec: unif mom} can be applied to the function class
\[\mathcal{G}_{\h}\coloneq \left\{\prod_{i=1}^d k((x_i-\cdot)/h_i)\colon x\in D\cap \mathbb{Q}^d\right\},\]
in order to bound the stochastic error. Indeed, this technique leads to the following result.
\begin{theorem}\label{thm: general rate}
	Assume \hyperref[framework]{($\cA$)} and fix $\theta>0$. Then for any $j\in[d],$ the following holds true:
\begin{enumerate}[label={\alph*)},ref={\thetheorem\ \alph*)}]
\item \label{thm: general rate exp}
If \ref{ass: exp mom} is fulfilled, exists a constant $c>0$ depending on $\theta$, such that for large enough $T$ it holds for any $\h\in\mathsf{H}_T$
\[\forall p\in\Big[1,\theta\log\Big(\sum_{i=1}^d h_i^{-1}\Big)\Big]\colon\quad\mathcal{R}^{(p)}_\infty(b^j\rho,\bar{b}^{(j)}_{\h,T};D)\leq	\mathcal{B}_{b^j\rho}(\h)+c(TV(\h))^{-1/2}\log\left(\sum_{i=1}^d h_i^{-1}\right)^{1/2}. \]
\item \label{thm: rate trunc est}
If \ref{ass: th mom} is fulfilled and $\delta=\alpha T^{1/4}$ for fixed $\alpha>0,$ exists a constant $c>0$ depending on $\theta$ and $\alpha$, such that for large enough $T$ it holds for any $\h\in\mathsf{H}_T$
\[\forall p\in\Big[1,\theta\log\Big(\sum_{i=1}^d h_i^{-1}\Big)\Big]\colon\quad\mathcal{R}^{(p)}_\infty(b^j\rho,\bar{b}^{(j),\delta}_{\h,T};D)\leq	\mathcal{B}_{b^j\rho}(\h)+c(TV(\h))^{-1/2}\log\left(\sum_{i=1}^d h_i^{-1}\right)^{1/2}. \]
\end{enumerate}
\end{theorem}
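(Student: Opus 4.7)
The natural first step is the bias-variance split
\[
\mathcal{R}^{(p)}_\infty\bigl(b^j\rho,\bar{b}^{(j)}_{\h,T};D\bigr)\leq \mathcal{B}_{b^j\rho}(\h)+\E\Bigl[\sup_{x\in D}\bigl|\bar{b}^{(j)}_{\h,T}(x)-\mu(K_\h(x-\cdot)b^j)\bigr|^p\Bigr]^{1/p},
\]
which exploits the symmetry of $K$ to identify $(K_\h\ast(b^j\rho))(x)$ with $\mu(K_\h(x-\cdot)b^j)$. As the bias term already appears in the statement, the task reduces to the stochastic piece. Writing $\bar{b}^{(j)}_{\h,T}(x)-\mu(K_\h(x-\cdot)b^j)=T^{-1/2}\bigl(\mathbb{I}^j_T(K_\h(x-\cdot))-\sqrt{T}\mu(K_\h(x-\cdot)b^j)\bigr)$, splitting $\mathbb{I}^j_T(g)=\mathbb{H}_T(gb^j)+\mathbb{M}^j_T(g)+\mathbb{J}^j_T(g)$ via Minkowski and replacing $\sup_{x\in D}$ by $\sup_{x\in D\cap \mathbb{Q}^d}$ (by Lipschitz continuity of $K_\h$), we are placed exactly in the framework of the uniform moment bounds of Section \ref{sec: unif mom} applied to $\mathcal{G}_\h$.

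These bounds---Proposition \ref{prop: cont moment bound} for $\mathbb{M}^j_T$, Proposition \ref{prop: jump moment bound} for $\mathbb{J}^j_T$ under \ref{ass: exp mom}, and the analogous bound from \cite{dexheimeraihp} for the centred $\mathbb{H}_T(\cdot\,b^j)$-piece---reduce the problem to computing norms and entropy integrals of $\mathcal{G}_\h$. Since $\supp k\subset[-1/2,1/2]$ and $\rho$ is bounded, one obtains
\[\sup_{g\in\mathcal{G}_\h}\Vert g\Vert_\infty\leq c\,V(\h)^{-1},\qquad \sup_{g\in\mathcal{G}_\h}\Vert g\Vert_{L^q(\mu)}\leq c\,V(\h)^{-1+1/q},\quad q=2,4,\]
while coordinatewise Lipschitz continuity together with the support estimate $\Vert g-g'\Vert_{L^q(\mu)}\leq c\,V(\h)^{1/q}\Vert g-g'\Vert_\infty$ yields a polynomial covering bound of the form $\log\mathcal{N}(u,\mathcal{G}_\h,d_{L^q(\mu)})\leq c\,d\,\bigl(1+\log(\sum_ih_i^{-1})+\log(V(\h)^{-1+1/q}/u)\bigr)$. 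Plugging these into the propositions, the dominant term is the sub-Gaussian chaining integral against $d_{L^2(\mu)}$, which, after the change of variables $u=V(\h)^{-1/2}t$, is of order $V(\h)^{-1/2}\sqrt{\log(\sum_ih_i^{-1})}$. Dividing by $\sqrt{T}$ then produces exactly the claimed rate $(TV(\h))^{-1/2}\log(\sum_ih_i^{-1})^{1/2}$. All other entropy pieces and sub-leading residuals carry extra negative powers of $T$ (of order $T^{-1/4}$ or $T^{-1/8}$) and are absorbed by virtue of the bandwidth restriction $V(\h)\geq T^{-1/2}\log(\sum h_i^{-1})^4$ built into $\mathsf{H}_T$. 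The upper cut-off $p\leq\theta\log(\sum_ih_i^{-1})$ arises at a single point, namely the residual term $\sqrt{p}\,\sup_g\Vert g\Vert_{L^2(\mu)}\leq c\sqrt{p}\,V(\h)^{-1/2}$, which matches the target rate precisely in that range.

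Part (b) is proved along identical lines, but starting from the decomposition of $\bar{b}^{(j),\delta}_{\h,T}(x)-\mu(K_\h(x-\cdot)b^j)$ provided by Lemma \ref{prop: trunc decomp} (after division by $\sqrt{T}$) and using Proposition \ref{prop: trunc chain} in place of Proposition \ref{prop: jump moment bound}. With $\delta=\alpha T^{1/4}$, the $\delta^{-1}$-prefactor in the bound on $\mathbb{J}^{\delta,2}_T$ cancels against the explicit factor $\delta$ appearing in Lemma \ref{prop: trunc decomp}, so that both truncated-jump contributions match the order achieved in (a), while the two remaining terms carrying the prefactor $\delta^{-2}=\mathcal{O}(T^{-1/2})$ produce at most an $\mathcal{O}(T^{-1/2})\leq (TV(\h))^{-1/2}$ contribution after the division by $\sqrt{T}$ and are thus absorbed. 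The principal obstacle in both parts is the careful bookkeeping of the numerous $T$- and $\h$-dependent constants that sit inside the entropy integrals of Propositions \ref{prop: cont moment bound}, \ref{prop: jump moment bound} and \ref{prop: trunc chain}: one must verify that, for every subleading entropy or residual term, the restriction defining $\mathsf{H}_T$ is just strong enough to push it below the leading $d_{L^2(\mu)}$-chaining contribution.
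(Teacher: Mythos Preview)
The proposal is correct and follows essentially the same route as the paper: bias--variance split, decomposition of the stochastic part into $\mathbb{H}_T$, $\mathbb{M}^j_T$, $\mathbb{J}^j_T$ (respectively the truncated pieces via Lemma \ref{prop: trunc decomp} for part b)), application of the uniform moment bounds of Section \ref{sec: unif mom} together with the $\mathbb{H}_T$-bound from \cite{dexheimeraihp}, and verification that all sub-leading contributions are dominated thanks to the bandwidth restriction defining $\mathsf{H}_T$. The only discrepancy is notational: your norm estimates $\Vert g\Vert_\infty\leq cV(\h)^{-1}$, $\Vert g\Vert_{L^q(\mu)}\leq cV(\h)^{-1+1/q}$ correspond to the \emph{normalised} kernels $K_\h(x-\cdot)$, whereas the paper's class $\mathcal{G}_\h$ consists of the un-normalised products $\prod_i k((x_i-\cdot)/h_i)$ (so that $\Vert g\Vert_\infty\leq\Vert K\Vert_\infty$, $\Vert g\Vert_{L^q(\mu)}\leq cV(\h)^{1/q}$), with the factor $V(\h)^{-1}$ pulled out at the end via $\mathcal{R}^{(p)}_\infty\leq\mathcal{B}_{b^j\rho}(\h)+T^{-1/2}V(\h)^{-1}\E[\sup_{g\in\mathcal{G}_\h}\vert\mathbb{I}^j_T(g)-\sqrt{T}\mu(gb^j)\vert^p]^{1/p}$; the two normalisations are of course equivalent.
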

The estimators defined in \eqref{def: aux est} can be interpreted as estimators for $b\rho,$ thus natural modifications for directly estimating the drift $b$ are given by the following Nadaraya--Watson type estimators defined for $x\in D, j\in[d],\delta>0, \h^b,\h^\rho\in(0,1)^d,$
\[\hat{b}^{(j)}_{\h^b,\h^\rho,T}(x)\coloneq \frac{	\bar{b}^{(j)}_{\h^b,T}(x)}{\vert \hat{\rho}_{\h^\rho,T}(x)\vert +r(T)},\quad \hat{b}^{(j),\delta}_{\h^b,\h^\rho,T}(x)\coloneq \frac{\bar{b}^{(j),\delta}_{\h^b,T}(x)}{\vert \hat{\rho}_{\h^\rho,T}(x)\vert +r(T)}, \]
where 
\begin{equation}
\label{def: dens est}
\hat{\rho}_{\h^\rho,T}(x)\coloneq \frac 1 T \int_0^T K_{\h^\rho}(x-X_s)\d s, 
\end{equation}
is a kernel density estimator, whose behaviour has been studied in \cite{dexheimeraihp} under \hyperref[framework]{($\cA$)}, and $r(T)$ is a strictly positive function. Applying Theorem \ref{thm: general rate} with the rate-optimal choices of $\h^b,\h^\rho$ together with suitable $r(T)$ then yields that both estimators achieve the nonparametric rate of convergence for a weighted version of the $\sup$-norm risk (see Remark \ref{rem: weight}).
\begin{corollary}\label{cor: rate}
Let $j\in[d],$ assume \hyperref[framework]{($\cA$)}, $b^j\rho,\rho\in\cH_D(\bbeta,\cL),$ where $\bbeta,\cL$ are given as in Definition \ref{def: hölder} with $\bar{\bbeta}>d/2\lor (2\land d)$ and $ \max_{i\in[d]}(\llfloor\beta_i\rrfloor)\leq \mathsf{k}$. Then choosing 
\begin{align*}
r(T)&= \Phi_{d,\bbeta}(T)\exp(\sqrt{\log(T)}),\quad\textrm{where}\quad \Phi_{d,\bbeta}\coloneq \begin{cases}
	\frac{\log(T)}{\sqrt{T}},& d\leq 2,\\
	\left(\frac{\log(T)}{T}\right)^{\frac{\bar{\bbeta}}{2\bar{\bbeta}+d-2}}, & d\geq 3,
\end{cases}
\\
\h^b&=(h_1^b(T),\ldots,h^b_d(T)),\quad\textrm{where}\quad h_i^b(T)\sim\left(\frac{\log(T)}{T}\right)^{\frac{\bar{\bbeta}}{(2\bar{\bbeta}+d)\beta_i}},
\\
\h^\rho&=(h^\rho_1(T),\ldots,h^\rho_d(T)),\quad\textrm{where}\quad h^\rho_i(T)\sim\begin{cases}
\frac{\log(T)^2}{\sqrt{T}},&d=1,
\\
\left(\frac{\log(T)^4}{T}\right)^{\frac{\bar{\bbeta}}{4\beta_i}}, &d=2,
\\
\left(\frac{\log(T)}{T}\right)^{\frac{\bar{\bbeta}}{(2\bar{\bbeta}+d-2)\beta_i}}, &d\geq3,
\end{cases}
\end{align*}
 gives the following:
\begin{enumerate}
\item[a)] \label{cor: a}If \ref{ass: exp mom} is fulfilled, it holds 
\[\E\left[\Vert(\hat{b}^{(j)}_{\h^b,\h^\rho,T}-b^j)\rho\Vert_{L^\infty(D)} \right]\in\cO\left(\left(\frac{\log(T)}{T}\right)^{\frac{\bar{\bbeta}}{2\bar{\bbeta}+d}}\right).\]
\item[b)]\label{cor: b} If \ref{ass: th mom} is fulfilled and $\delta=\alpha T^{1/4}$ for some fixed $\alpha>0,$ it holds 
\[\E\left[\Vert(\hat{b}^{(j),\delta}_{\h^b,\h^\rho,T}-b^j)\rho\Vert_{L^\infty(D)} \right]\in\cO\left(\left(\frac{\log(T)}{T}\right)^{\frac{\bar{\bbeta}}{2\bar{\bbeta}+d}}\right).\]
\end{enumerate}
\end{corollary}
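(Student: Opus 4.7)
The plan is to reduce the weighted drift estimation error to a numerator error (controlled by Theorem~\ref{thm: general rate}) plus a denominator error (controlled by the density-estimation results of \cite{dexheimeraihp}), via the algebraic decomposition
\begin{equation*}
\bigl(\hat b^{(j)}_{\h^b,\h^\rho,T}-b^j\bigr)\rho \;=\; \frac{\rho}{|\hat\rho_{\h^\rho,T}|+r(T)}\Bigl[\bigl(\bar b^{(j)}_{\h^b,T}-b^j\rho\bigr)+b^j\bigl(\rho-|\hat\rho_{\h^\rho,T}|\bigr)-b^jr(T)\Bigr].
\end{equation*}
To handle the prefactor, I would split the probability space into the good event $A_T=\{\|\hat\rho_{\h^\rho,T}-\rho\|_{L^\infty(D)}\leq r(T)/2\}$ and its complement, and treat the two regimes separately.

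On $A_T$, the reverse triangle inequality yields $|\hat\rho_{\h^\rho,T}|+r(T)\geq \rho$ pointwise on $D$, hence the prefactor is bounded by $1$. The first summand in the bracket is then controlled by Theorem~\ref{thm: general rate} with bandwidth $\h^b$: the anisotropic bias $\mathcal{B}_{b^j\rho}(\h^b)$ is of order $\sum_i(h_i^b)^{\beta_i}$ by standard arguments for kernels of order $\mathsf{k}\geq \max_i\llfloor\beta_i\rrfloor$, and balancing it against the stochastic term $(TV(\h^b))^{-1/2}\log(T)^{1/2}$ gives exactly the target rate $(\log T/T)^{\bar{\bbeta}/(2\bar{\bbeta}+d)}$ for the stated choice of $\h^b$. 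The second summand is controlled by an analogous sup-norm $L^p$-moment bound for the kernel density estimator $\hat\rho_{\h^\rho,T}$ taken from \cite{dexheimeraihp}, which, once balanced at $\h^\rho$, yields the density rate $\Phi_{d,\bbeta}(T)$; the third summand equals $\|b^j\|_\infty r(T)$ deterministically. A short computation in each of the three dimensional regimes confirms that $r(T)=\Phi_{d,\bbeta}(T)\exp(\sqrt{\log T})$ is asymptotically dominated by $(\log T/T)^{\bar{\bbeta}/(2\bar{\bbeta}+d)}$, so on $A_T$ all three summands fit into the claimed rate.

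On $A_T^c$, one can only use the crude bound $\rho/(|\hat\rho_{\h^\rho,T}|+r(T))\leq \|\rho\|_\infty/r(T)$, which blows up polynomially in $T$. I would control this contribution by Cauchy--Schwarz, $\mathbb{E}[\mathbf{1}_{A_T^c}Y]\leq \mathbb{P}(A_T^c)^{1/2}\mathbb{E}[Y^2]^{1/2}$, with the $L^2$-factor polynomially bounded thanks to the uniform estimates for $\bar b^{(j)}_{\h^b,T}$ and $\hat\rho_{\h^\rho,T}$. To force $\mathbb{P}(A_T^c)$ to vanish faster than any polynomial, I apply Markov's inequality with exponent $p=\lfloor\theta\log T\rfloor$ (admissible since $\log(\sum_i (h^\rho_i)^{-1})$ is of order $\log T$) together with the $L^p$-moment bound on $\|\hat\rho_{\h^\rho,T}-\rho\|_{L^\infty(D)}$ scaling like $\Phi_{d,\bbeta}(T)$. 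The extra factor $\exp(\sqrt{\log T})$ baked into $r(T)$ then gives $\mathbb{P}(A_T^c)\leq C^p\exp(-p\sqrt{\log T})$, which beats any polynomial and renders the bad-event contribution negligible. The argument for part~(b) is verbatim the same, replacing Theorem~\ref{thm: general rate exp} by Theorem~\ref{thm: rate trunc est}.

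The main obstacle is precisely the tension that dictates the form of $r(T)$: on the good event one wants $r(T)$ small so that $\|b^j\|_\infty r(T)$ is absorbed by the drift rate, whereas on the bad event one wants $r(T)$ large so that the polynomial blow-up of $1/r(T)$ is killed by $\mathbb{P}(A_T^c)^{1/2}$. The slight inflation $\exp(\sqrt{\log T})$ over $\Phi_{d,\bbeta}$ is essentially the slowest super-logarithmic correction achieving both simultaneously, and verifying that this choice is truly admissible in every dimensional regime is where the arithmetic has to be done most carefully.
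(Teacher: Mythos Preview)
Your proposal is correct and matches the paper's proof essentially line for line: the same algebraic decomposition of $(\hat b^{(j)}-b^j)\rho$, the same good/bad event split according to whether $\|\hat\rho_{\h^\rho,T}-\rho\|_{L^\infty(D)}$ is below a multiple of $r(T)$, the same use of $\rho/(|\hat\rho|+r(T))\le 1$ on the good event together with Theorem~\ref{thm: general rate} and the density bound from \cite{dexheimeraihp}, and on the bad event the same Cauchy--Schwarz argument combined with Markov's inequality at a growing exponent to exploit the $\exp(\sqrt{\log T})$ slack in $r(T)$. The only cosmetic differences are that the paper takes the threshold $r(T)$ rather than $r(T)/2$ and chooses $p=5\sqrt{\log T}$ rather than $p\sim\theta\log T$; both choices work, and the paper also notes explicitly that $\bar{\bbeta}>d/2$ guarantees $\h^b\in\mathsf{H}_T$, which you use implicitly when invoking Theorem~\ref{thm: general rate}.
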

\begin{remark}\label{rem: weight}
\begin{enumerate}[a)]
\item The requirement $\bar{\bbeta}>d/2$ is frequent in the context of drift estimation for diffusion type processes, see, for example, Theorem 13 in \cite{str15} or Theorem 4.5 in \cite{kindiff} (note that the investigated process in this reference is $2d$ dimensional). It stems from the term involving $T^{-1/4}$ in Propositions \ref{prop: cont moment bound}, \ref{prop: jump moment bound} and \ref{prop: trunc chain} dominating the uniform moment bounds if the bandwidths are too small, which is the case if $\bar{\bbeta}\leq d/2$. The other condition $\bar{\bbeta}>2\land d$ is needed for the analysis of the invariant density estimator $\hat{\rho}$ and has been discussed in detail in Remark 4.3 of \cite{dexheimeraihp}. 
\item \textcolor{dex}{The weighted version of the $\sup$-norm risk considered in Corollary \ref{cor: rate} can be interpreted as the empirical counterpart to the traditional $\sup$-norm risk, similarly as the empirical $L^2$ risk investigated in \cite{schmisser14} compared to the standard $L^2$ risk. Clearly, if $\rho$ is bounded away from zero on the domain $D,$ the results of Corollary \ref{cor: rate} also translate to the normal $\sup$-norm risk. Intuitively, the weight function $\rho$ is needed since $\rho$ being small on a subset of $D$ corresponds to the process $\X$  spending little time in that subset, and thus providing limited information about its drift there.}
\item\textcolor{dex}{Although the estimator $\hat{b}^{(j)}_{\h^b,\h^\rho,T}$ requires more restrictive assumptions than $\hat{b}^{(j),\delta}_{\h^b,\h^\rho,T}$ for achieving the same rate of convergence, its investigation can also be justified apart from the theoretical comparison of the continuous and discontinuous setting. This is because if one wants to modify the given estimators for practical applications, in particular under discrete observations, the needed knowledge on $\X$'s jumps for the truncated estimator becomes challenging, whereas employing a discretised version of $\hat{b}^{(j)}_{\h^b,\h^\rho,T}$ is relatively straightforward. Nevertheless, since $\hat{b}^{(j),\delta}_{\h^b,\h^\rho,T}$ only requires insight on the jumps of magnitude larger than $\sim T^{1/4},$ their detection is possible for large enough $T$ through a similar truncation method as used in \cite{maiou}.  }
\end{enumerate}
\end{remark}
\section{Adaptive Estimation}\label{sec: adap}
As the rate-optimal choices $\h^b$ and $\h^\rho$ in Corollary \ref{cor: rate} obviously depend on the in general unknown smoothness indices, the question of finding an adaptive estimator suited for drift estimation without this knowledge naturally arises. For this we employ a variant of the Goldenshluger--Lepski procedure presented in \cite{goldenshluger2011,lepski2013}, which requires us to introduce the following setup.
As in Section \ref{sec: drift est} we fix a bounded open set $D\subset\R^d,$ on which we additionally assume that $\rho\geq \rho_{\min}$ holds for some a priori known constant $\rho_{\min}>0$. For $K$ given as in Section \ref{sec: drift est} and any multi-bandwidths $\h=(h_1,\ldots,h_d),\boldsymbol{\eta}=(\eta_1,\ldots,\eta_d)\in(0,1)^d$ and $x=(x_1,\ldots,x_d)^\top\in\R^d$ we set
\begin{align}\notag
K_{\h}\star K_{\boldsymbol{\eta}}(x)&\coloneq \prod_{i=1}^d (h_i^{-1} k(x_i/h_i))\ast (\eta_i^{-1} k(x_i/\eta_i))
\\&= \prod_{i=1}^d  \int_{\R}  h_i^{-1} k((u-x_i)/h_i)\eta_i^{-1} k(u/\eta_i)\d u\notag
\\
\notag &=\int_{\R^d} K_{\h}(u-x)K_{\boldsymbol{\eta}}(u)\d u,
\end{align}
and for $(X^{j,\delta}_t)_{t\geq0},$ defined as in Section \ref{sec: drift est} we let
\begin{align*}
\bar{b}^{(j)}_{\h,\boldsymbol{\eta},T}(x)&\coloneq \frac{1}{T}\int_0^T (K_{\h}\star K_{\boldsymbol{\eta}})(X_{s-}-x)\d X^j_s,
\\
\bar{b}^{(j),\delta}_{\h,\boldsymbol{\eta},T}(x)&\coloneq \frac{1}{T}\int_0^T (K_{\h}\star K_{\boldsymbol{\eta}})(X_{s-}-x)\d X^{j,\delta}_s.
\end{align*}
Additionally we introduce for some fixed, but arbitrary, $\iota>1,$ the set of candidate multi-bandwidths as
\[\mathscr{H}_T\coloneq \left\{\h=(h_1,\ldots,h_d)\in(0,\log(1+T)^{-1})^d\colon h_i=\iota^{-k_i}\,\mathrm{ with }\, k_i\in\N, \iota^{\sum_{i=1}^dk_i}\log\left(\sum_{i=1}^d \iota^{k_i}\right)^{4}\leq T^{1/2}  \right\}, \]
and for any $\h\in\mathscr{H}_T,\theta,\alpha>0,$ we denote
\begin{align*}
A_{T,1}(\h,\theta)&\coloneq2\e T^{-1/2} V(\h)^{-1/2}\log\left(\sum_{i=1}^d h_i^{-1}\right)^{1/2}\Vert K\Vert_\infty\Bigg(C_1\left(21+29d\sqrt{\Vert a\Vert_{\infty}}
+17c_1 \right) 
\\&\quad+\sqrt{\theta}\tilde{c}_2\left(3+4\sqrt{\Vert a_{jj}\Vert_\infty }
+3c_1\right)\Bigg)
\\
A_{T,2}(\h,\theta,\alpha)&\coloneq
2\e T^{-1/2} V(\h)^{-1/2}\log\left(\sum_{i=1}^d h_i^{-1}\right)^{1/2}\Vert K\Vert_\infty\Bigg(29C_1d\sqrt{\Vert a\Vert_{\infty}}
+4\sqrt{\theta\Vert a_{jj}\Vert_\infty }\tilde{c}_2,
\\
&\quad +(1+\cE_\alpha)\left(64C_1 \sqrt{d}+3\sqrt{\theta}\tilde{c}_2\right)
+\frac{\Vert \gamma\Vert_\infty}{\gamma_{\min}}\left(1+\frac{\nu_2\Vert \gamma\Vert_\infty^2}{2}\exp(\alpha)\right)\left(64\sqrt{d}C_1+3\sqrt{\theta}\tilde{c}_2\right)\Bigg),
\\
\Upsilon_{T,1}^j(\h)&\coloneq \sup_{\eta\in \mathscr{H}_T}\left(\Vert \bar{b}^{(j)}_{\h,\boldsymbol{\eta},T}-\bar{b}^{(j)}_{\boldsymbol{\eta},T}\Vert_{D,\infty}-A_{T,1}(\boldsymbol{\eta},d) \right)_+,
\\
\Upsilon_{T,2}^j(\h)&\coloneq \sup_{\eta\in \mathscr{H}_T}\left(\Vert \bar{b}^{(j),\delta}_{\h,\boldsymbol{\eta},T}-\bar{b}^{(j),\delta}_{\boldsymbol{\eta},T}\Vert_{D,\infty}-A_{T,2}(\boldsymbol{\eta},d,\alpha) \right)_+,
\end{align*}
where $C_1$ is introduced in the proof of Proposition \ref{prop: jump moment bound}, $c_1$ and $\tilde{c}_2$ are given in Lemma \ref{lemma: qv jump bound}, respectively Proposition \ref{prop: cont moment bound}, $\gamma_{\min}$ is defined in Assumption \ref{ass: th mom} and $\nu_2$ and $\cE_\alpha$ are denoted in Proposition \ref{prop: trunc chain}. Choosing the bandwidths $\hat{\h}^j_1,\hat{\h}^j_2$ according to
\begin{align*}
	\Upsilon_{T,1}^j(\hat{\h}^j_1)+A_{T,1}(\hat{\h}^j_1,d)&=\inf_{\h\in\mathscr{H}_T}\left(\Upsilon_{T,1}^j(\h)+A_{T,1}(\h,d) \right),
	\\
	\Upsilon_{T,2}^j(\hat{\h}^j_2)+A_{T,2}(\hat{\h}^j_2,d,\alpha)&=\inf_{\h\in\mathscr{H}_T}\left(\Upsilon_{T,2}^j(\h)+A_{T,2}(\h,d,\alpha) \right),
\end{align*}
then allows us to provide the following oracle inequalities over the class of candidate bandwidths $\mathscr{H}_T$ for the adaptive versions of the auxiliary estimators introduced in \eqref{def: aux est}.
\begin{proposition}\label{thm: adap}Assume \hyperref[framework]{($\cA$)}. Then for any $j\in[d]$ the following holds true:
\begin{enumerate}
\item[a)] If \ref{ass: exp mom} is fulfilled then there exists a constant $c>0,$ such that for large enough $T$
\[\mathcal{R}^{(1)}_\infty(b^j\rho,\bar{b}^{(j)}_{\hat{\h}^j_1,T};D)\leq c\left(\inf_{\h\in\mathscr{H}_T}\left(\mathcal{B}_{b^j\rho}(\h)+(TV(\h))^{-1/2}\log\left(\sum_{i=1}^dh_i^{-1}\right)^{1/2}\right)+\log(T)^{d+1/2}T^{-1/2}\right). \]
\item[b)] If \ref{ass: th mom} is fulfilled and $\delta=\alpha T^{1/4}$ then there exists a constant $c>0,$ such that for large enough $T$
\[\mathcal{R}^{(1)}_\infty(b^j\rho,\bar{b}^{(j),\delta}_{\hat{\h}^j_2,T};D)\leq c\left(\inf_{\h\in\mathscr{H}_T}\left(\mathcal{B}_{b^j\rho}(\h)+(TV(\h))^{-1/2}\log\left(\sum_{i=1}^dh_i^{-1}\right)^{1/2}\right)+\log(T)^{d+1/2}T^{-1/2}\right). \]
\end{enumerate}
\end{proposition}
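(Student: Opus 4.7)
The plan is to follow the classical Goldenshluger--Lepski machinery for adaptive bandwidth selection, so I only sketch the main steps and highlight where the results of Section \ref{sec: unif mom} are invoked.

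First I would fix an arbitrary $\h\in\mathscr{H}_T$ and use the commutativity of convolution, which gives $\bar{b}^{(j)}_{\h,\hat{\h}^j_1,T}=\bar{b}^{(j)}_{\hat{\h}^j_1,\h,T}$. Inserting $\pm\bar{b}^{(j)}_{\h,\hat{\h}^j_1,T}$ and $\pm\bar{b}^{(j)}_{\h,T}$ inside the norm and applying the triangle inequality yields
\begin{align*}
\|\bar{b}^{(j)}_{\hat{\h}^j_1,T}-b^j\rho\|_{L^\infty(D)}&\leq\|\bar{b}^{(j)}_{\hat{\h}^j_1,T}-\bar{b}^{(j)}_{\h,\hat{\h}^j_1,T}\|_{L^\infty(D)}+\|\bar{b}^{(j)}_{\hat{\h}^j_1,\h,T}-\bar{b}^{(j)}_{\h,T}\|_{L^\infty(D)}\\
&\quad+\|\bar{b}^{(j)}_{\h,T}-b^j\rho\|_{L^\infty(D)}.
\end{align*}
By the very definition of $\Upsilon_{T,1}^j$, the first two summands are bounded above by $\Upsilon_{T,1}^j(\h)+A_{T,1}(\hat{\h}^j_1,d)$ and $\Upsilon_{T,1}^j(\hat{\h}^j_1)+A_{T,1}(\h,d)$, respectively. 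Exploiting the minimality of $\hat{\h}^j_1$, i.e.\ $\Upsilon_{T,1}^j(\hat{\h}^j_1)+A_{T,1}(\hat{\h}^j_1,d)\leq \Upsilon_{T,1}^j(\h)+A_{T,1}(\h,d)$, one reassembles the above into the oracle-type bound
\[\|\bar{b}^{(j)}_{\hat{\h}^j_1,T}-b^j\rho\|_{L^\infty(D)}\leq 2\Upsilon_{T,1}^j(\h)+2A_{T,1}(\h,d)+\|\bar{b}^{(j)}_{\h,T}-b^j\rho\|_{L^\infty(D)}.\]

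After taking expectations, the oracle term on the right is controlled by the bias--stochastic error decomposition underlying Theorem \ref{thm: general rate exp}, giving $\mathcal{B}_{b^j\rho}(\h)+c(TV(\h))^{-1/2}\log(\sum_ih_i^{-1})^{1/2}$ (note $\mathscr{H}_T\subset\mathsf{H}_T$), while $A_{T,1}(\h,d)$ is of the same order by construction. The crux of the argument is thus to establish $\E[\Upsilon_{T,1}^j(\h)]\leq c\log(T)^{d+1/2}T^{-1/2}$ uniformly in $\h$. For this I would view $\bar{b}^{(j)}_{\h,\boldsymbol{\eta},T}-\bar{b}^{(j)}_{\boldsymbol{\eta},T}$ as an empirical process indexed by $\{(K_{\boldsymbol{\eta}}\star K_{\h}-K_{\boldsymbol{\eta}})(\,\cdot\,-x):x\in D\cap\mathbb{Q}^d\}$, decompose it along the lines of Section \ref{sec: unif mom} into its drift, continuous martingale and compensated jump part, and apply the uniform moment bounds of \cite{dexheimeraihp}, Proposition \ref{prop: cont moment bound} and Proposition \ref{prop: jump moment bound}. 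Choosing the moment order $p$ of size $d\log(\sum_i\eta_i^{-1})$ shows that the resulting $L^p$-bound is dominated by the threshold $A_{T,1}(\boldsymbol{\eta},d)$; a union bound over $\mathscr{H}_T$, whose cardinality is at most a multiple of $(\log T)^d$, combined with Markov's inequality, then produces the claimed remainder. Part b) is proved along identical lines, invoking Proposition \ref{prop: trunc chain} and Theorem \ref{thm: rate trunc est} instead of their non-truncated counterparts.

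The hard part will be the sharp calibration of the penalties $A_{T,i}(\boldsymbol{\eta},d)$. On the one hand they must dominate, up to absolute constants, the $L^p$-norm of the empirical process increment $\bar{b}^{(j)}_{\h,\boldsymbol{\eta},T}-\bar{b}^{(j)}_{\boldsymbol{\eta},T}$ for $p$ of logarithmic order, so that after the Markov union bound over $\mathscr{H}_T$ only the unavoidable $\log(T)^{d+1/2}T^{-1/2}$ remainder survives; on the other hand, they have to match the leading stochastic error order $(TV(\boldsymbol{\eta}))^{-1/2}\log(\sum_i\eta_i^{-1})^{1/2}$ from Theorem \ref{thm: general rate}, so that the oracle inequality stays sharp. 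This balance is what motivates the rather intricate explicit constants appearing in the definitions of $A_{T,1}$ and $A_{T,2}$.
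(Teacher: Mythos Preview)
Your oracle decomposition via the triangle inequality and the minimality of $\hat{\h}^j_1$ is correct and matches the paper. The gap is in the treatment of $\Upsilon_{T,1}^j(\h)$: your claim that $\E[\Upsilon_{T,1}^j(\h)]\leq c\log(T)^{d+1/2}T^{-1/2}$ uniformly in $\h$ is false. The process $\bar{b}^{(j)}_{\h,\boldsymbol{\eta},T}-\bar{b}^{(j)}_{\boldsymbol{\eta},T}$ is \emph{not} centred; its expectation equals $K_{\boldsymbol{\eta}}\ast\big(K_{\h}\ast(b^j\rho)-b^j\rho\big)(x)$, which is of order $\mathcal{B}_{b^j\rho}(\h)$ and does not vanish uniformly over $\h\in\mathscr{H}_T$. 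Hence the moment bounds of Section~\ref{sec: unif mom} applied to the class $\{(K_{\boldsymbol{\eta}}\star K_{\h}-K_{\boldsymbol{\eta}})(\cdot-x)\}$ cannot possibly produce a bound dominated by $A_{T,1}(\boldsymbol{\eta},d)$ alone. A second, more technical issue is that the threshold $A_{T,1}(\boldsymbol{\eta},d)$ is calibrated with explicit constants matching the single-kernel class $\mathcal{G}_{\boldsymbol{\eta}}$; the double-kernel class has different $\|\cdot\|_\infty$ and $\|\cdot\|_{L^2(\mu)}$ envelopes, so even the centred part would not be absorbed.

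The paper fixes this by introducing the auxiliary quantity $\zeta_T=\sup_{\boldsymbol{\eta}\in\mathscr{H}_T}\big(\|\mathcal S_{\boldsymbol{\eta},T}\|_{\tilde D,\infty}-A_{T,1}(\boldsymbol{\eta},d)\big)_+$, where $\mathcal S_{\boldsymbol{\eta},T}=\bar b_{\boldsymbol{\eta},T}-\mu(K_{\boldsymbol{\eta}}(x-\cdot)b^j)$ is the \emph{single-kernel} stochastic error on the enlarged domain $\tilde D=\{x:d(x,D)\leq 1\}$. The key step is Fubini's theorem for stochastic integrals, which gives $\bar b_{\h,\boldsymbol{\eta},T}(x)-s_{\h,\boldsymbol{\eta},T}(x)=\int K_{\boldsymbol{\eta}}(u)\mathcal S_{\h,T}(u+x)\d u$, so that $\|\bar b_{\h,\boldsymbol{\eta},T}-s_{\boldsymbol{\eta},T}\|_{D,\infty}\leq \|K\|_{L^1}\big(\|\mathcal S_{\h,T}\|_{\tilde D,\infty}+\mathcal B_{b^j\rho}(\h)\big)$. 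This yields $\Upsilon_{T,1}^j(\h)\leq c\big(\zeta_T+A_{T,1}(\h,d)+\mathcal B_{b^j\rho}(\h)\big)$, i.e.\ the bias and $A_{T,1}(\h,d)$ reappear here and are absorbed into the infimum, while only $\E[\zeta_T]$ is shown to be of order $\log(T)^{d+1/2}T^{-1/2}$ via the Markov/union-bound argument you sketched (now applied to the single-kernel class, for which $A_{T,1}$ is the right threshold).
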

Similarly as in Section \ref{sec: drift est}, we now introduce Nadaraya--Watson type estimators, more precisely for $x\in D, j\in[d],\delta>0$, we  set
\begin{align}
	\hat{b}_{\textrm{adap},T}^{(j)}(x)&\coloneq \frac{\bar{b}^{(j)}_{\hat{\h}^j_1,T}(x)}{ \hat{\rho}_{\hat{\h}_\rho,T}(x)\lor \rho_{\min}}, \quad
	\hat{b}_{\textrm{adap},T}^{(j),\delta}(x)\coloneq \frac{\bar{b}^{(j),\delta}_{\hat{\h}^j_2,T}(x)}{ \hat{\rho}_{\hat{\h}_\rho,T}(x)\lor \rho_{\min}}.\label{def: adap est}
\end{align}
Here for a multi-bandwidth $\h$ the kernel density estimator $\hat{\rho}_{\h,T}$ is defined as in \eqref{def: dens est}, and $\hat{\h}_\rho$ is assumed to be an adaptive bandwidth choice, such that 
\begin{equation}\label{eq: adap dens}
\cR^{(1)}_\infty(\rho,\hat{\rho}_{\hat{\h}_\rho,T};D)\in \cO\left(\left(\frac{\log(T)}{T}\right)^{\frac{\bar{\bbeta}}{2\bar{\bbeta}+d}}\right), 
\end{equation}
if $\rho\in\cH_D(\bbeta,\cL)$ with $\bar{\bbeta}>d\land 2$ and \hyperref[framework]{($\cA$)} is fulfilled. This assumption is discussed in detail in Remark \ref{rem: adap}.
As the optimal bandwidth choices in Corollary \ref{cor: rate} are contained in $\mathscr{H}_T$ if $\bar{\bbeta}>d/2,$ a straightforward application of the above oracle inequalities then yields the following result on the $\sup$-norm risk of the adaptive Nadaraya--Watson type estimators defined in \eqref{def: adap est}.
\begin{theorem}\label{thm: adap final} Let $j\in[d],$ assume \hyperref[framework]{($\cA$)}, $b^j\rho,\rho\in\cH_D(\bbeta,\cL),$ with $\bar{\bbeta}>d/2\lor (2\land d)$ and $ \max_{i\in[d]}(\llfloor\beta_i\rrfloor)\leq \mathsf{k}$. Then the following holds true:
\begin{enumerate}[a)]
\item If \ref{ass: exp mom} is fulfilled it holds
\begin{equation}\notag 
\mathcal{R}^{(1)}_\infty(b^j,\hat{b}^{(j)}_{\mathrm{adap},T};D)\in\cO\left( \left(\frac{\log(T)}{T}\right)^{\frac{\bar{\bbeta}}{2\bar{\bbeta}+d}}\right).
\end{equation}
\item If \ref{ass: th mom} is fulfilled and $\delta=\alpha T^{1/4}$ it holds
\begin{equation}\notag 
	\mathcal{R}^{(1)}_\infty(b^j,\hat{b}^{(j),\delta}_{\mathrm{adap},T};D)\in\cO\left( \left(\frac{\log(T)}{T}\right)^{\frac{\bar{\bbeta}}{2\bar{\bbeta}+d}}\right).
\end{equation}
\end{enumerate}
\end{theorem}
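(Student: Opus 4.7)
The strategy is to reduce both claims to two separate adaptive ingredients: the oracle inequality of Proposition~\ref{thm: adap} applied to the numerator of each Nadaraya--Watson ratio, and the bound \eqref{eq: adap dens} applied to the denominator. Writing $\tilde\rho_T \coloneq \hat\rho_{\hat\h_\rho,T}\lor \rho_{\min}$, I would start from the standard ratio decomposition
\[
\hat b^{(j)}_{\mathrm{adap},T}-b^j \;=\; \frac{\bar b^{(j)}_{\hat\h^j_1,T}-b^j\rho}{\tilde\rho_T}\;+\;\frac{b^j(\rho-\tilde\rho_T)}{\tilde\rho_T}.
\]
Since $\rho\geq\rho_{\min}$ on $D$, one has $\tilde\rho_T\geq \rho_{\min}$ and $|\rho-\tilde\rho_T|\leq|\rho-\hat\rho_{\hat\h_\rho,T}|$; combining this with the boundedness of $b^j$ on the bounded set $D$ (guaranteed by the local boundedness assumption of Section~\ref{subsec: not}) gives
\[
\|\hat b^{(j)}_{\mathrm{adap},T}-b^j\|_{L^\infty(D)} \leq \rho_{\min}^{-1}\bigl(\|\bar b^{(j)}_{\hat\h^j_1,T}-b^j\rho\|_{L^\infty(D)}+\|b^j\|_{L^\infty(D)}\|\rho-\hat\rho_{\hat\h_\rho,T}\|_{L^\infty(D)}\bigr),
\]
and analogously for $\hat b^{(j),\delta}_{\mathrm{adap},T}$ with $\bar b^{(j),\delta}_{\hat\h^j_2,T}$ in the numerator. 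Taking expectations, the density term contributes $\cO((\log T/T)^{\bar\bbeta/(2\bar\bbeta+d)})$ directly by \eqref{eq: adap dens}, as the hypothesis $\bar\bbeta>d/2\lor(2\land d)$ subsumes $\bar\bbeta>2\land d$.

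The remaining task is to show that the oracle bound of Proposition~\ref{thm: adap} delivers the same rate for the numerator. I would exhibit a specific candidate $\h^\ast=(h_1^\ast,\dots,h_d^\ast)\in\mathscr H_T$ imitating the rate-optimal $\h^b$ of Corollary~\ref{cor: rate}, namely $h_i^\ast=\iota^{-k_i}$ with $k_i\in\N$ chosen so that $h_i^\ast\sim (\log T/T)^{\bar\bbeta/((2\bar\bbeta+d)\beta_i)}$. The main (and essentially only) technical check is that the resulting $V(\h^\ast)\sim (\log T/T)^{d/(2\bar\bbeta+d)}$ fulfills the defining condition $V(\h^\ast)\geq T^{-1/2}\log(\sum_i (h_i^\ast)^{-1})^4$, which after clearing exponents amounts to $T^{(2\bar\bbeta-d)/(2(2\bar\bbeta+d))}\geq c\,\log(T)^{4-d/(2\bar\bbeta+d)}$ for large $T$; this is where the hypothesis $\bar\bbeta>d/2$ is used. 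The requirement $h_i^\ast\leq \log(1+T)^{-1}$ is automatic for $T$ large.

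Once $\h^\ast\in\mathscr H_T$ is established, evaluating the right-hand side of Proposition~\ref{thm: adap} at $\h^\ast$ is routine. A standard anisotropic bias estimate, valid because $\mathsf k\geq\max_i \llfloor\beta_i\rrfloor$ and $b^j\rho\in\cH_D(\bbeta,\cL)$, gives $\cB_{b^j\rho}(\h^\ast)\leq c\sum_i (h_i^\ast)^{\beta_i}\sim (\log T/T)^{\bar\bbeta/(2\bar\bbeta+d)}$; the stochastic term $(TV(\h^\ast))^{-1/2}\log(\sum_i (h_i^\ast)^{-1})^{1/2}$ collapses to the same order after combining the exponents; and the additive remainder $\log(T)^{d+1/2}T^{-1/2}$ of Proposition~\ref{thm: adap} is of strictly smaller order since $\bar\bbeta/(2\bar\bbeta+d)<1/2$. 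Adding the two contributions of the decomposition yields the claimed rate for part a), while the identical argument replacing Proposition~\ref{thm: adap} a) by Proposition~\ref{thm: adap} b) establishes part b).
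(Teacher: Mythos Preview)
Your proposal is correct and follows essentially the same route as the paper: both decompose the Nadaraya--Watson error via $\tilde\rho_T\geq\rho_{\min}$ and $|\tilde\rho_T-\rho|\leq|\hat\rho_{\hat\h_\rho,T}-\rho|$ to reduce the problem to Proposition~\ref{thm: adap} for the numerator and \eqref{eq: adap dens} for the denominator, then observe that the rate-optimal bandwidth $\h^b$ of Corollary~\ref{cor: rate} lies in $\mathscr{H}_T$ precisely when $\bar\bbeta>d/2$. Your write-up is in fact more explicit than the paper's, spelling out the membership check $\h^\ast\in\mathscr{H}_T$ and the negligibility of the $\log(T)^{d+1/2}T^{-1/2}$ remainder that the paper leaves implicit.
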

\begin{remark}\label{rem: adap}
In the isotropic setting, where $\rho$ is assumed to be in $\cH_D(\bbeta,\cL)$ with $\beta_i=\beta_1$ for all $i\in[d],$ the assumption on the existence of an adaptive bandwidth choice $\hat{\h}_\rho$ satisfying \eqref{eq: adap dens} can be justified through Theorem 4.2 in \cite{dexheimeraihp} as soon as \hyperref[framework]{($\cA$)} and $\beta_1>2$ are fulfilled and $d\geq 3$. The assumption $d\geq3$ in this context is no restriction as the optimal bandwidth choice $\h^\rho$ given in Corollary \ref{cor: rate} is independent of $\bbeta$ in the isotropic setting if $d<3$. In fact, this reference provides $\hat{\h}_\rho,$ such that
\[ \cR^{(1)}_\infty(\rho,\hat{\rho}_{\hat{\h}_\rho,T};D)\in\cO\left(\log(T)\Phi_{d,\bbeta} \right),\]
where $\Phi_{d,\bbeta}$ is defined in Corollary \ref{cor: rate}, thus achieving better results than needed for our analysis. Using the uniform moment bounds of Theorem 3.1 in \cite{dexheimeraihp} and setting up a variant of the Goldenshluger--Lepski procedure analogous to the given section, it is straightforward to extend these results to the anisotropic case. Hence, the assumption on the existence of a suitable adaptive bandwidth choice for the kernel density estimator can also be justified in the general anisotropic setting.
\end{remark}
\textcolor{dex}{
\section{Conclusion}\label{sec: dis}
We have introduced two different adaptive Nadaraya--Watson type estimators for the drift of a jump diffusion of the form \eqref{eq: jumpdiff intro} and shown that they both achieve the classical nonparametric rate of convergence measured in the $\sup$-norm risk under anisotropic Hölder smoothness conditions and differing assumptions on the jump measure. As explained in Section \ref{sec: intro}, we investigate both estimators in detail for comparing the continuous with the discontinuous setting. We showed that under the exponential moment assumption \ref{ass: exp mom}, the same estimator as used in the continuous setting in \cite{aeck22} still achieves satisfying results and were able to generalise this to the less restrictive third moment assumption \ref{ass: th mom} by introducing a truncated estimator. Crucial for the proofs were new uniform moment bounds for empirical processes related to jump diffusions, which are proven through the extension of Talagrand's generic chaining device in \cite{dirksen15}, together with the exponential martingale inequality \eqref{eq: exp martingale ineq}. Since our results on the rate of convergence coincide with the benchmark case of continuous and reversible diffusion processes, in which the classical nonparametric rate of convergence is known to be minimax optimal (see \cite{str15}), our results can be regarded as optimal as well.  }
\appendix
\section{Auxilliary results}
Before we can start with the proofs of the main results of this paper we require some auxiliary results. Firstly, we introduce the following Bernstein type inequality for exponentially $\beta$-mixing Markov processes.
\begin{lemma}\label{lemma: bernstein dirksen form}
	Suppose that $\X$ is an exponentially $\beta$-mixing Markov process, and let $g$ be a bounded, measurable function satisfying $\mu(g)=0$. Then, for any $T,u>0$ and $m_T\in(0,T/4]$ exists $\tau \in [m_T,2m_T],$ such that 
	\begin{align*}
		&\P\left(\frac{1}{\sqrt{T}}\int_0^T g(X_s)\d s>32\sqrt{u}\left(\sqrt{\Var\left(\frac{1}{\sqrt{\tau}}\int_0^\tau g(X_s)\d s \right)}+2\sqrt{u}\Vert g\Vert_\infty \frac{m_T}{\sqrt{T}} \right) \right)
		\\
		&\leq 2\exp(-u)+\frac{T}{m_T}c_\kappa \exp(-\kappa m_T)\1_{(u,\infty)}\left(\frac{T}{16m_T}\right)
	\end{align*}
\end{lemma}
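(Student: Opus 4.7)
My plan is to follow the classical block decomposition approach for $\beta$-mixing processes in the spirit of Berbee and Viennet, which is already alluded to in the discussion just above Proposition \ref{prop: cont moment bound}. The target inequality has the shape of a Bernstein bound for independent bounded random variables, corrected by a coupling cost of order $(T/m_T)c_\kappa\exp(-\kappa m_T)$, so the strategy is to decompose $\int_0^T g(X_s)\,\d s$ into blocks of length $\tau\in[m_T,2m_T]$, decouple them via Berbee's coupling lemma, and then apply the classical Bernstein inequality for sums of independent bounded random variables.

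Concretely, for each candidate $\tau\in[m_T,2m_T]$ I partition $[0,T]$ into consecutive blocks of length $\tau$ plus a remainder of length at most $\tau\leq 2m_T$, and separate the blocks into odd- and even-numbered parity classes. Setting $U_i(\tau)=\int_{(i-1)\tau}^{i\tau} g(X_s)\,\d s$ and letting $R(\tau)$ denote the remainder, the centred integral decomposes as
\[\frac{1}{\sqrt T}\int_0^T g(X_s)\,\d s \;=\; \frac{1}{\sqrt T}\sum_{i\text{ odd}} U_i(\tau) \;+\; \frac{1}{\sqrt T}\sum_{i\text{ even}} U_i(\tau) \;+\; \frac{1}{\sqrt T}R(\tau),\]
with the deterministic bound $|R(\tau)|\leq 2\|g\|_\infty m_T$. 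The specific $\tau\in[m_T,2m_T]$ is then pinned down via a Fubini-type averaging argument over the interval, producing a $\tau$ for which the variance of each parity-class sum is bounded by the single-block variance $\sigma_\tau^2:=\Var(\tau^{-1/2}\int_0^\tau g(X_s)\,\d s)$ appearing on the right-hand side of the statement, while simultaneously controlling the boundary term.

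Within each parity class consecutive blocks are separated by a gap of length exactly $\tau$, so Berbee's coupling lemma delivers i.i.d.\ sequences $(\tilde U_i^{\text{odd}})$ and $(\tilde U_i^{\text{even}})$ with the marginal law of $U_1(\tau)$ that coincide with the original blocks on an event of probability at least $1-(T/m_T)c_\kappa\exp(-\kappa m_T)$, by \ref{ass: mixing}. On this good event I invoke the classical Bernstein inequality: each normalised block is bounded by $\|g\|_\infty\tau/\sqrt T\leq 2\|g\|_\infty m_T/\sqrt T$ and the variance of each parity-class sum is at most $\sigma_\tau^2$, which gives
\[\P\Bigl(\Bigl|\tfrac{1}{\sqrt T}\sum_i \tilde U_i^{\text{odd/even}}\Bigr| > 16\sqrt{u}\bigl(\sqrt{\sigma_\tau^2} + 2\sqrt u\,\|g\|_\infty m_T/\sqrt T\bigr)\Bigr) \leq \exp(-u).\]
Summing the two parity bounds and absorbing the remainder yields the stated prefactor $32\sqrt u$ together with the probability $2\exp(-u)$ plus the coupling cost $(T/m_T)c_\kappa\exp(-\kappa m_T)$. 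The indicator $\1_{(u,\infty)}(T/(16m_T))$ reflects the regime $u\geq T/(16m_T)$: there the quadratic term $64u\|g\|_\infty m_T/\sqrt T$ already exceeds the deterministic bound $\|g\|_\infty \sqrt T$ on the centred integral, so the event is empty and no coupling correction is needed.

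The main obstacle will be the averaging step that identifies a specific $\tau\in[m_T,2m_T]$ matching exactly the variance on the right-hand side. A naive block decomposition leads to a double sum of cross-block covariances, and collapsing these into the single quantity $\sigma_\tau^2$ requires either an appropriate mean-value-type averaging over $\tau$ or a further stationarity upper bound; maintaining the numerical constants in the precise form stated---in particular the prefactor $32$---is the only delicate book-keeping beyond the otherwise standard Berbee-plus-Bernstein scheme.
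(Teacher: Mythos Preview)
Your approach is correct in spirit but takes a substantially longer route than the paper. The paper does \emph{not} redo the Berbee coupling plus block decomposition from scratch; instead it invokes an already-proved Bernstein inequality for exponentially $\beta$-mixing processes (Lemma 3.1 in \cite{dexheimer2020mixing}), which delivers for any $t>0$ and $m_T\in(0,T/4]$ the existence of $\tau\in[m_T,2m_T]$ with
\[
\P\Bigl(\tfrac{1}{\sqrt T}\int_0^T g(X_s)\,\d s>t\Bigr)\leq 2\exp\Bigl(-\tfrac{t^2}{32(\sigma_\tau^2+2t\|g\|_\infty m_T/\sqrt T)}\Bigr)+\tfrac{T}{m_T}c_\kappa \e^{-\kappa m_T}\1_{(0,4\sqrt T\|g\|_\infty)}(t).
\]
The entire proof then consists of substituting $t=32\sqrt u\bigl(\sigma_\tau+2\sqrt u\|g\|_\infty m_T/\sqrt T\bigr)$ and checking algebraically that the exponent is at most $-u$, together with rewriting the indicator condition $t<4\sqrt T\|g\|_\infty$ as $u<T/(16m_T)$. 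This is a two-line manipulation once the cited lemma is in hand.

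What you outline is essentially a reproof of that cited lemma itself: the block decomposition, the odd/even parity separation, Berbee's coupling, and the classical Bernstein inequality are exactly the ingredients behind Lemma 3.1 of \cite{dexheimer2020mixing}. So your plan would work, but it reproduces machinery the paper treats as a black box. The advantage of your route is self-containedness; the advantage of the paper's route is brevity, and it also sidesteps the delicate $\tau$-selection and constant-tracking you flagged as the main obstacle, since those are absorbed into the cited result.
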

\begin{proof}
By Lemma 3.1 in \cite{dexheimer2020mixing} we have that for any $T,u>0$ and $m_T\in(0,T/4]$ there exists $\tau \in [m_T,2m_T],$ such that 
\begin{align*}
\P\left(\frac{1}{\sqrt{T}}\int_0^T g(X_s)\d s>u \right)&\leq 2\exp\left(-\frac{u^2}{32\left(\Var\left(\frac{1}{\sqrt{\tau}}\int_0^\tau g(X_s)\d s \right)+2u\Vert g\Vert_\infty \frac{m_T}{\sqrt{T}}\right)} \right)
\\
&\quad +\frac{T}{m_T}c_\kappa \exp(-\kappa m_T)\1_{(0,4\sqrt{T}\Vert g\Vert_\infty)}(u).
\end{align*}
Hence we get
\begin{align*}
	&\P\left(\frac{1}{\sqrt{T}}\int_0^T g(X_s)\d s>32\sqrt{u}\left(\sqrt{\Var\left(\frac{1}{\sqrt{\tau}}\int_0^\tau g(X_s)\d s \right)}+2\sqrt{u}\Vert g\Vert_\infty \frac{m_T}{\sqrt{T}} \right) \right)
	\\
	&\leq 2\exp\left(-32u\frac{\Var\left(\frac{1}{\sqrt{\tau}}\int_0^\tau g(X_s)\d s \right)+4u\Vert g\Vert^2_\infty \frac{m^2_T}{T}+4\sqrt{u}\Var\left(\frac{1}{\sqrt{\tau}}\int_0^\tau g(X_s)\d s \right)\Vert g\Vert_\infty \frac{m_T}{\sqrt{T}} }{\Var\left(\frac{1}{\sqrt{\tau}}\int_0^\tau g(X_s)\d s\right)+64\sqrt{u}\Var\left(\frac{1}{\sqrt{\tau}}\int_0^\tau g(X_s)\d s\right)\Vert g\Vert_\infty \frac{m_T}{\sqrt{T}} +128u\Vert g\Vert^2_\infty \frac{m_T^2}{T}} \right)
	\\
	&\quad +\frac{T}{m_T}c_\kappa \exp(-\kappa m_T)\1_{(0,4\sqrt{T}\Vert g\Vert_\infty)}\left(32\sqrt{u}\left(\sqrt{\Var\left(\frac{1}{\sqrt{\tau}}\int_0^\tau g(X_s)\d s \right)}+2\sqrt{u}\Vert g\Vert_\infty \frac{m_T}{\sqrt{T}} \right)  \right)
	\\
	&\leq 2\exp(-u)+\frac{T}{m_T}c_\kappa \exp(-\kappa m_T)\1_{(0,4\sqrt{T}\Vert g\Vert_\infty)}\left(64u\Vert g\Vert_\infty \frac{m_T}{\sqrt{T}}  \right)
		\\
	&\leq 2\exp(-u)+\frac{T}{m_T}c_\kappa \exp(-\kappa m_T)\1_{(u,\infty)}\left(\frac{T}{16m_T}\right).
\end{align*}
\end{proof}
The two following lemmas will prove to be useful in combination with the exponential martingale inequality \ref{eq: exp martingale ineq}.
\begin{lemma}\label{lemma: qv jump bound}
Grant assumption \ref{ass: exp mom}. Then for any $0<y\leq c_{1,\nu}(2\Vert g\Vert_\infty \Vert \gamma\Vert_\infty)^{-1},j\in[d]$ it holds
\[\int_0^T\int_{\R^d}\exp(yg(X_{s-})\gamma^j(X_{s-})z))-1-yg(X_{s-})\gamma^j(X_{s-})z)\nu(\d z)\d s\leq c_1 y^2\int_0^Tg(X_s)^2\d s, \]
where
\begin{equation}\label{eq:  def c_1}
c_1\coloneq\Vert \gamma\Vert_\infty^2\left( \frac{1}{2}\exp(c_{1,\nu}/2) \int_{B(0,1)} \Vert z\Vert^2\nu(\d z)+4 c_{1,\nu}^{-2}\int_{B(0,1)^\mathsf{C}}\exp(c_{1,\nu}\Vert z\Vert)\nu(\d z)\right).
\end{equation}
\end{lemma}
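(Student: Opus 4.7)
The plan is to establish the stronger pointwise-in-$s$ inequality
\[\int_{\R^d}\bigl(\exp(yg(X_{s-})\gamma^j(X_{s-})z)-1-yg(X_{s-})\gamma^j(X_{s-})z\bigr)\nu(\d z)\leq c_1\, y^2 g(X_{s-})^2,\]
and then integrate in time (noting $\int_0^T g(X_{s-})^2\d s=\int_0^T g(X_s)^2\d s$ since $\X$ has only countably many jumps). The starting point is the elementary inequality $\exp(u)-1-u\leq \tfrac12 u^2\exp(|u|)$, valid for all $u\in\R$, which I would apply pointwise to $u = yg(X_{s-})\gamma^j(X_{s-})z$. The hypothesis $y\leq c_{1,\nu}/(2\Vert g\Vert_\infty\Vert\gamma\Vert_\infty)$ then yields $|u|\leq y\Vert g\Vert_\infty\Vert\gamma\Vert_\infty\Vert z\Vert\leq (c_{1,\nu}/2)\Vert z\Vert$.

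I would then split the $\nu$-integral at $\Vert z\Vert=1$. On $B(0,1)$ the uniform estimate $|u|\leq c_{1,\nu}/2$ gives $\exp(|u|)\leq \exp(c_{1,\nu}/2)$, and combined with $u^2\leq y^2 g(X_{s-})^2\Vert\gamma\Vert_\infty^2\Vert z\Vert^2$ this yields directly the first contribution $\tfrac12\Vert\gamma\Vert_\infty^2\exp(c_{1,\nu}/2)\int_{B(0,1)}\Vert z\Vert^2\nu(\d z)$ of $c_1$. On $B(0,1)^{\mathsf{C}}$ one only has $\exp(|u|)\leq \exp(c_{1,\nu}\Vert z\Vert/2)$, so the polynomial factor $\Vert z\Vert^2$ produced by $u^2$ must be absorbed into the exponential. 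I would do this via the elementary bound $t^2\leq 2\exp(t)$ applied with $t=c_{1,\nu}\Vert z\Vert/2$, which yields $\Vert z\Vert^2\exp(c_{1,\nu}\Vert z\Vert/2)\leq 8c_{1,\nu}^{-2}\exp(c_{1,\nu}\Vert z\Vert)$; integrating against $\nu$ and invoking \ref{ass: exp mom} then produces the second contribution $4c_{1,\nu}^{-2}\Vert\gamma\Vert_\infty^2\int_{B(0,1)^{\mathsf{C}}}\exp(c_{1,\nu}\Vert z\Vert)\nu(\d z)$, matching $c_1$ exactly.

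The only subtle step is this polynomial-versus-exponential trade-off on the large jumps: the factor $1/2$ in the hypothesis on $y$ is precisely what is needed, since it reserves half of the available exponential budget (the $\exp(c_{1,\nu}\Vert z\Vert/2)$ that remains after bounding $\exp(|u|)$) to absorb the quadratic factor $\Vert z\Vert^2$, while leaving the exponent $c_{1,\nu}\Vert z\Vert$ appearing in \ref{ass: exp mom} intact on the right. Summing the two contributions yields the pointwise bound with exactly the prescribed constant $c_1$, after which integrating over $s\in[0,T]$ completes the proof.
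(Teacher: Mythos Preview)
Your proposal is correct and follows essentially the same route as the paper's proof: both split the $\nu$-integral at $\Vert z\Vert=1$, use the Taylor-type bound $\exp(u)-1-u\leq \tfrac12 u^2\exp(|u|)$ (the paper derives this via $h(x)\leq h(|x|)$ followed by Taylor's remainder, whereas you state it directly), and on $B(0,1)^{\mathsf{C}}$ absorb the factor $\Vert z\Vert^2$ into the remaining exponential budget via $t^2\leq 2\exp(t)$ with $t=c_{1,\nu}\Vert z\Vert/2$. The arguments and resulting constants coincide.
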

\begin{proof}
Define $h\colon \R\to \R, h(x)=\exp(x)-1-x$. Then $h$ is monotonically increasing on $[0,\infty)$ and the classical inequality $\exp(x)\geq 1+x, x\in\R$ gives for any $x\in\R$
\begin{align*}
0=\int_{-\vert x\vert}^{\vert x\vert} y\d y\leq \int_{-\vert x\vert}^{\vert x\vert} \exp(y)-1\d y=\exp(\vert x\vert)-\exp(-\vert x\vert)-2\vert x\vert.
\end{align*}
Rearranging this inequality yields $h(x)\leq h(\vert x\vert)$ for any $x\in\R$.
By Taylor's theorem we then get for $c_\gamma\coloneq\Vert \gamma\Vert_\infty,$
\begin{align*}
	\int_{\R^d}h(yg(X_{s-})\gamma^j(X_{s-})z))\nu(\d z)
	&\leq  \int_{B(0,1)}h(yc_\gamma\vert g(X_{s-})\vert \Vert z\Vert)\nu(\d z)+\int_{B(0,1)^\mathsf{C}}h(yc_\gamma\vert g(X_{s-})\vert \Vert z\Vert)\nu(\d z)
	\\
	&\leq \frac{1}{2}\exp(yc_\gamma\vert g(X_{s-})\vert)y^2c^2_\gamma\vert g(X_{s-})\vert^2 \int_{B(0,1)} \Vert z\Vert^2\nu(\d z)\\&\quad+\int_{B(0,1)^\mathsf{C}}\exp(yc_\gamma\vert g(X_{s-})\vert \Vert z\Vert)-1-yc_\gamma\vert g(X_{s-})\vert \Vert
	z\Vert\nu(\d z).
\end{align*}
Now by \ref{ass: exp mom} we get by Taylor's theorem and the assumption on $y$,
\begin{align*}
	&\int_{B(0,1)^\mathsf{C}}\exp(yc_\gamma\vert g(X_{s-})\vert \Vert z\Vert)-1-yc_\gamma\vert g(X_{s-})\vert \Vert
	z\Vert\nu(\d z)
\\
&\leq \frac{1}{2}(yc_\gamma\vert g(X_{s-})\vert)^2\int_{B(0,1)^\mathsf{C}}\exp(yc_\gamma\vert g(X_{s-})\vert \Vert z\Vert) \Vert
z\Vert^2\nu(\d z)
\\
&\leq  4(yc_\gamma\vert g(X_{s-})\vert c_{1,\nu}^{-1})^2\int_{B(0,1)^\mathsf{C}}\exp(c_{1,\nu}\Vert z\Vert)\nu(\d z),
\end{align*}
where we used the inequality $\exp(x)\geq x^2/2,x>0$ in the last step.
Hence if $2y c_\gamma\vert g(X_{s-})\vert\leq c_{1,\nu},$ it holds
\begin{align*}
	&\int_{\R^d}\exp(yg(X_{s-})\gamma^j(X_{s-})z))-1-yg(X_{s-})\gamma^j(X_{s-})z)\nu(\d z)
	\\
	&\leq y^2 g(X_{s-})^2c_\gamma^2\left( \frac{1}{2}\exp(c_{1,\nu}/2) \int_{B(0,1)} \Vert z\Vert^2\nu(\d z)+4 c_{1,\nu}^{-2}\int_{B(0,1)^\mathsf{C}}\exp(c_{1,\nu}\Vert z\Vert)\nu(\d z)\right),
\end{align*}
which concludes the proof.
\end{proof}
\begin{lemma}\label{lemma: qv jump bound comp supp}
Grant assumption \ref{ass: th mom}. Then, for any $y,\delta>0$ it holds
\begin{align*}
		&\int_{B(0,\delta)}\exp(yg(X_{s-})\gamma^j(X_{s-})z))-1-yg(X_{s-})\gamma^j(X_{s-})z)\nu(\d z)
	\\
	&\leq (y\Vert \gamma\Vert_\infty g(X_{s-}))^2\frac{\nu_2\exp(y\Vert \gamma\Vert_\infty\vert g(X_{s-})\vert \delta)}{2}.
\end{align*}	
\end{lemma}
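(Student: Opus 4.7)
The plan is to reduce the claim to a pointwise Taylor bound on the function $h(x)=\e^x-1-x$, then integrate trivially against $\nu$ restricted to $B(0,\delta)$. Specifically, I would first establish that $h(x)\leq \tfrac{x^2}{2}\e^{|x|}$ for all $x\in\R$, which follows from writing $h(x)=\sum_{k\geq 2}x^k/k!$, bounding $|x^k/k!|\leq \tfrac{|x|^2}{2}\cdot |x|^{k-2}/(k-2)!$, and summing. This is the same flavor of Taylor-based estimate used in the proof of Lemma \ref{lemma: qv jump bound}, but here the integration domain is compact so there is no need to split between $B(0,1)$ and its complement.

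Next, I would apply this bound to $x=yg(X_{s-})\gamma^j(X_{s-})z$. Since $\gamma^j(X_{s-})z$ is a scalar product of a row of $\gamma$ with $z$, one has $|\gamma^j(X_{s-})z|\leq \Vert \gamma\Vert_\infty\Vert z\Vert$. Thus
\[
h\bigl(yg(X_{s-})\gamma^j(X_{s-})z\bigr)\leq \frac{y^2 g(X_{s-})^2\Vert \gamma\Vert_\infty^2\Vert z\Vert^2}{2}\,\exp\bigl(y|g(X_{s-})|\Vert \gamma\Vert_\infty\Vert z\Vert\bigr).
\]
On $B(0,\delta)$ one can upper-bound $\Vert z\Vert\leq \delta$ inside the exponential (but keep $\Vert z\Vert^2$ in the prefactor), which pulls the exponential factor $\exp(y\Vert\gamma\Vert_\infty|g(X_{s-})|\delta)$ outside the integral.

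Finally, I would integrate against $\nu$ over $B(0,\delta)$, using that $\int_{B(0,\delta)}\Vert z\Vert^2\,\nu(\d z)\leq \nu_2$. Under \ref{ass: th mom}, $\nu_2$ is finite because $\int \Vert z\Vert^3\nu(\d z)<\infty$ controls the large-$z$ part and the Lévy-measure property $\int(\Vert z\Vert^2\wedge 1)\nu(\d z)<\infty$ handles the small-$z$ part. Collecting the constants yields exactly the stated bound $(y\Vert \gamma\Vert_\infty g(X_{s-}))^2\tfrac{\nu_2\exp(y\Vert\gamma\Vert_\infty|g(X_{s-})|\delta)}{2}$.

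There is no real obstacle here; the only subtlety is the choice of which factor of $\Vert z\Vert$ to estimate by $\delta$ — bounding the polynomial $\Vert z\Vert^2$ by $\delta^2$ would give a weaker, $\delta$-dependent constant in front, whereas keeping $\Vert z\Vert^2$ under the integral and using $\delta$ only in the exponential produces the cleaner form involving $\nu_2$, which is what matches the statement.
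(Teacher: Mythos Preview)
Your proposal is correct and follows essentially the same route as the paper: both reduce to the Taylor-type bound $h(x)\leq \tfrac{x^2}{2}\e^{|x|}$, estimate $|\gamma^j(X_{s-})z|\leq \Vert\gamma\Vert_\infty\Vert z\Vert$, pull the exponential out using $\Vert z\Vert\leq\delta$ on $B(0,\delta)$, and bound $\int_{B(0,\delta)}\Vert z\Vert^2\,\nu(\d z)\leq\nu_2$. The only cosmetic difference is that the paper first passes to $h(|x|)$ via the monotonicity argument from Lemma~\ref{lemma: qv jump bound} and then invokes Taylor with Lagrange remainder, whereas you obtain the same inequality directly from the power series.
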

\begin{proof}
Arguing as in the proof of Lemma \ref{lemma: qv jump bound} Taylor's theorem gives for $c_\gamma=\Vert \gamma\Vert_\infty,$
\begin{align*}
	&\int_{B(0,\delta)}\exp(yg(X_{s-})\gamma^j(X_{s-})z))-1-yg(X_{s-})\gamma^j(X_{s-})z)\nu(\d z)
	\\
	&\leq \int_{B(0,\delta)}\exp(yc_\gamma\vert g(X_{s-})\vert \Vert z\Vert)-1-yc_\gamma\vert g(X_{s-})\vert \Vert z\Vert\nu(\d z)
		\\
	&\leq (yc_\gamma\vert g(X_{s-})\vert)^2\frac{\exp(yc_\gamma\vert g(X_{s-})\vert \delta)}{2} \int_{B(0,\delta)} \Vert z\Vert^2\nu(\d z).
\end{align*}
\end{proof}
\section{Proofs for Section \ref{sec: unif mom}}
We start with the proof of Proposition \ref{prop: jump moment bound} as it is more involved than the proof of Proposition \ref{prop: cont moment bound}.
\begin{proof}[Proof of Proposition \ref{prop: jump moment bound}]
	Firstly, Theorem 2.2 in \cite{applebaum09paper} gives for any $T,x,y>0$
	\begin{align*}
		\P\left(\mathbb{J}^j_T(g)>\frac{x}{\sqrt{T}}+\frac{1}{\sqrt{T}y} \int_0^T\int_{\R^d}\exp(yg(X_{s-})\gamma^j(X_{s-})z))-1-yg(X_{s-})\gamma^j(X_{s-})z)\nu(\d z)\d s \right)\leq \exp(-xy).
	\end{align*}
	Thus, applying Lemma \ref{lemma: qv jump bound} to $g$ and $-g$ we get for any $x,T>0$ and $0<y\Vert g\Vert_\infty \Vert\gamma^j\Vert_\infty\leq c_{1,\nu}/2$ 
	\begin{align*}
		\P\left(\left\vert\mathbb{J}^j_T(g)\right\vert >\frac{x}{\sqrt{T}}+\frac{yc_1}{\sqrt{T}}\int_0^Tg(X_s)^2\d s \right)
		&\leq 2\exp(-xy),
	\end{align*}
	which implies for any $r>0,$
	\begin{align*}
		&\P\left(\left\vert\mathbb{J}^j_T(g)\right\vert >\frac{x}{\sqrt{T}}+yc_1r+yc_1\sqrt{T}\mu(g^2)\right)
		\\
		&\leq 
		\P\left(\left\vert\mathbb{J}^j_T(g)\right\vert >\frac{x}{\sqrt{T}}+yc_1r+yc_1\sqrt{T}\mu(g^2),\frac{1}{\sqrt{T}}\int_0^Tg(X_s)^2-\mu(g^2)\d s\leq r \right)
		\\&\quad +\P\left(\frac{1}{\sqrt{T}}\int_0^Tg(X_s)^2-\mu(g^2)\d s>r \right)
		\\
		&\leq 
		\P\left(\left\vert\mathbb{J}^j_T(g)\right\vert >\frac{x}{\sqrt{T}}+\frac{yc_1}{\sqrt{T}}\int_0^Tg(X_s)^2\d s \right)+\P\left(\frac{1}{\sqrt{T}}\int_0^Tg(X_s)^2-\mu(g^2)\d s>r \right)
		\\
		&\leq 
		2\exp(-xy)+\P\left(\frac{1}{\sqrt{T}}\int_0^Tg(X_s)^2-\mu(g^2)\d s>r \right).
	\end{align*}
	Hence choosing $m_T=\sqrt{T}/(2\sqrt{\kappa})$ in Lemma \ref{lemma: bernstein dirksen form} we have for large enough $T,$
	\begin{align}
		&\P\left(\left\vert\mathbb{J}^j_T(g)\right\vert >\frac{x}{\sqrt{T}}+yc_132\sqrt{u}\left(\sqrt{\Var\left(\frac{1}{\sqrt{\tau}}\int_0^\tau g(X_s)^2\d s \right)}+\kappa^{-1/2}\sqrt{u}\Vert g^2-\mu(g^2)\Vert_\infty  \right) +yc_1\sqrt{T}\mu(g^2)\right)\notag
		\\
		&\leq \notag
		2\exp(-xy)+2\exp(-u)+2c_\kappa\sqrt{\kappa T} \exp(-\sqrt{\kappa T}/2)\1_{(u,\infty)}\left(\frac{\sqrt{\kappa T}}{8}\right)
		\\
		&\leq 
		2\exp(-xy)+4\exp(-u).\label{eq: jump chain 1}
	\end{align}
	Now note, that for $u>0$ it holds for $x_u=u \left(\sqrt{T\mu(g^2)/u}+T^{3/8}\Vert g\Vert_{L^4(\mu)}+T^{1/4}\Vert g\Vert_\infty  \right), y_u=u/x_u$
	\begin{align*}
		&\frac{x_u}{\sqrt{T}}+y_uc_132\sqrt{u}\left(\sqrt{\Var\left(\frac{1}{\sqrt{\tau}}\int_0^\tau g(X_s)^2\d s \right)}+\kappa^{-1/2}\sqrt{u}\Vert g^2-\mu(g^2)\Vert_\infty  \right) +y_uc_1\sqrt{T}\mu(g^2)
		\\
		&\leq \sqrt{u}(1+c_1)\Vert g\Vert_{L^2(\mu)}+uT^{-1/8}\Vert g\Vert_{L^4(\mu)}+uT^{-1/4}\Vert g\Vert_\infty  \\&\quad+32y_uc_1\sqrt{u}\left(\sqrt{\tau\mu(g^4)}+\kappa^{-1/2}\sqrt{u}(\Vert g^2\Vert_\infty+\mu(g^2))  \right) 
		\\
		&\leq \sqrt{u}(1+c_1)\Vert g\Vert_{L^2(\mu)}+uT^{-1/8}\Vert g\Vert_{L^4(\mu)}+uT^{-1/4}\Vert g\Vert_\infty  \\&\quad+32y_uc_1\sqrt{u}\left((T/\kappa)^{1/4}\sqrt{\mu(g^4)}+2\kappa^{-1/2}\sqrt{u}\Vert g^2\Vert_\infty  \right) 
		\\
		&\leq \sqrt{u}(1+c_1)\Vert g\Vert_{L^2(\mu)}+uT^{-1/8}\Vert g\Vert_{L^4(\mu)}+uT^{-1/4}\Vert g\Vert_\infty  \\&\quad+32c_1\sqrt{u}\kappa^{-1/4}T^{-1/8}\Vert g\Vert_{L^4(\mu)}+64uc_1\kappa^{-1/2}T^{-1/4}\Vert g\Vert_\infty
		\\
		&=u\left(T^{-1/8}\Vert g\Vert_{L^4(\mu)}+(64c_1\kappa^{-1/2}+1)T^{-1/4}\Vert g\Vert_\infty  \right)+\sqrt{u}\left((1+c_1)\Vert g\Vert_{L^2(\mu)}+32c_1\kappa^{-1/4}T^{-1/8}\Vert g\Vert_{L^4(\mu)}\right)
	\end{align*}
	Since $2y_u\Vert g\Vert_\infty \Vert \gamma\Vert_\infty \leq c_{1,\nu}$ holds for large enough $T$ for any $u>0$, this implies for large enough values of $T$ together with \eqref{eq: jump chain 1}
	\begin{align*}
		&\P\Bigg(\left\vert\mathbb{J}^j_T(g)\right\vert > u\left(T^{-1/8}\Vert g\Vert_{L^4(\mu)}+(64c_1\kappa^{-1/2}+1)T^{-1/4}\Vert g\Vert_\infty  \right)\\&\hskip 2cm +\sqrt{u}\left((1+c_1)\Vert g\Vert_{L^2(\mu)}+32c_1\kappa^{-1/4}T^{-1/8}\Vert g\Vert_{L^4(\mu)}\right)\Bigg)
		\\
		&\leq 
		6\exp(-u).
	\end{align*}
	As 
	\begin{align}\label{eq: conc jump}
		\notag	&\P\Bigg(\left\vert\mathbb{J}^j_T(g)\right\vert > 3u\left(T^{-1/8}\Vert g\Vert_{L^4(\mu)}+(64c_1\kappa^{-1/2}+1)T^{-1/4}\Vert g\Vert_\infty  \right)\\&\hskip 2cm +\sqrt{3u}\left((1+c_1)\Vert g\Vert_{L^2(\mu)}+32c_1\kappa^{-1/4}T^{-1/8}\Vert g\Vert_{L^4(\mu)}\right)\Bigg)
		\\\notag
		&\leq 
		2\exp(-u),
	\end{align}
	is trivially true for $u\leq \log(2),$ we then obtain for large enough $T,$ that \eqref{eq: conc jump} holds true for any $u>0$. 
	Hence we can apply Theorem 3.5 of \cite{dirksen15}, which yields for any $p\geq 1$
	\begin{align*}
		&\E\Big[\sup_{g\in\mathcal{G}}\vert \mathbb{J}^j_T(g)\vert^p\Big]^{1/p}
		\\
		&\leq c\Bigg(\tilde{c}_1\int_0^\infty \log\left(\mathcal{N}(u,\mathcal{G},3T^{-1/8}d_{L^4(\mu)}+3(64c_1\kappa^{-1/2}+1)T^{-1/4}d_\infty) \right) \d u
		\\
		&\quad +\tilde{c}_2\int_0^\infty\sqrt{\log\left(\mathcal{N}(u,\mathcal{G},\sqrt{3}(1+c_1)d_{L^2(\mu)}+\sqrt{3072}c_1\kappa^{-1/4}T^{-1/8}d_{L^4(\mu)}) \right) }\d u\Bigg)
		+2\sup_{g\in\mathcal{G}}\E\left[\vert \mathbb{J}^j_T(g)\vert^p \right]^{1/p},
	\end{align*}
	where we bounded the $\gamma_\alpha$ functionals by the corresponding entropy integrals (see e.g. Equation (2.3) in \cite{dirksen15}). Noting that a slight adjustment of Lemma A.2 in \cite{dirksen15} implies for any $p\geq1,g\in\mathcal{G},$ together with \eqref{eq: conc jump} 
	\begin{align*}
		&\E\left[\vert \mathbb{J}^j_T(g)\vert^p \right]^{1/p}
		\\
		&\leq 6p\e^{1/(2\e)}(\sqrt{8\pi}\e^{1/(12p)})^{1/p}\e^{-1}\left(T^{-1/8}\Vert g\Vert_{L^4(\mu)}+(64c_1\kappa^{-1/2}+1)T^{-1/4}\Vert g\Vert_\infty  \right)
		\\&\quad+\sqrt{3p}2\e^{-1/2}(\sqrt{2\pi}\e^{1/(6p)})^{1/p}\e^{1/(2\e)}\left((1+c_1)\Vert g\Vert_{L^2(\mu)}+32c_1\kappa^{-1/4}T^{-1/8}\Vert g\Vert_{L^4(\mu)}\right),
	\end{align*}
	concludes the proof.

\end{proof}
\begin{proof}[Proof of Proposition \ref{prop: cont moment bound}]
	By Bernstein's inequality for continuous martingales (see e.g.\ p.153 in \cite{revuzyor1999}) and Lemma \ref{lemma: bernstein dirksen form} we have that there exists $\tau\in [\sqrt{T}/(2\sqrt{\kappa}),\sqrt{T}/\sqrt{\kappa}]$ such that for any $x>0$
	\begin{align*}
		&\P\left(\sqrt{T}\mathbb{M}^j_T(g)>\sqrt{T}x\right)
		\\
		&\leq \P\Bigg(\sqrt{T}\mathbb{M}^j_T(g)>\sqrt{T}x, \qv{\sqrt{T}\mathbb{M}^j_{\cdot}(g)}_T\leq 32\sqrt{Tu}\left(\sqrt{\Var\left(\frac{1}{\sqrt{\tau}}\int_0^\tau g^2(X_s)a_{jj}(X_s)\d s \right)}+2\sqrt{u}\Vert g^2a_{jj}\Vert_\infty \frac{m_T}{\sqrt{T}} \right) \\&\quad+\sqrt{T}\mu(g^2a_{jj})\Bigg) +2\exp(-u)+\frac{T}{m_T}c_\kappa \exp(-\kappa m_T)\1_{(u,\infty)}\left(\frac{T}{16m_T}\right)
		\\
		&\leq 2\exp\left(-\frac{Tx^2}{64\sqrt{Tu}\left(\sqrt{\Var\left(\frac{1}{\sqrt{\tau}}\int_0^\tau g^2(X_s)a_{jj}(X_s)\d s \right)}+\sqrt{u}\Vert g^2a_{jj}\Vert_\infty \frac{1}{\sqrt{\kappa}} \right)+2\sqrt{T}\mu(g^2a_{jj})}\right)+4\exp(-u)
		\\
		&\leq 2\exp\left(-\frac{\sqrt{T}x^2}{64\sqrt{u}\left(\left(\frac{T}{\kappa}\right)^{1/4}\sqrt{2\mu(g^4a^2_{jj})}+\sqrt{u}\Vert g^2a_{jj}\Vert_\infty \frac{1}{\sqrt{\kappa}} \right)+2\sqrt{T}\mu(g^2a_{jj})}\right)+4\exp(-u)
		\\
		&\leq 2\exp\left(-\frac{\sqrt{T}x^2}{64\Vert a_{jj}\Vert_\infty\sqrt{u}\left(\left(\frac{T}{\kappa}\right)^{1/4}\sqrt{2\mu(g^4)}+\sqrt{u}\Vert g^2\Vert_\infty \frac{1}{\sqrt{\kappa}} \right)+2\sqrt{T}\mu(g^2a_{jj})}\right)+4\exp(-u)
	\end{align*}
	where we choose $m_T=\sqrt{T}/(2\sqrt{\kappa})$ in Lemma \ref{lemma: bernstein dirksen form} as in the derivation of \eqref{eq: jump chain 1}.
	Hence for any $u>0$ it holds
	\begin{align*}
		&\P\left(\mathbb{M}^j_T(g)>\sqrt{\Vert a_{jj}\Vert_\infty}\left((512)^{1/4}(\kappa T)^{-1/8}(\sqrt{u}+u)\Vert g\Vert_{L^4(\mu)}+8u(\kappa T)^{-1/4}\Vert g\Vert_\infty+\sqrt{2u}\Vert g\Vert_{L^2(\mu)}\right)\right)
		\\
		&\leq 6\exp(-u),
	\end{align*}
	and arguing as in the derivation of \eqref{eq: conc jump}, we obtain that for large enough $T$ it holds for any $u>0$
	\begin{align*}
		&\P\Bigg(\mathbb{M}^j_T(g)>\sqrt{\Vert a_{jj}\Vert_\infty}\Big(\sqrt{6u}\left((128)^{1/4}(\kappa T)^{-1/8}\Vert g\Vert_{L^4(\mu)}+\Vert g\Vert_{L^2(\mu)}\right)\\&\quad\qquad\qquad+24u\left((\kappa T)^{-1/4}\Vert g\Vert_\infty+(\kappa T)^{-1/8}\Vert g\Vert_{L^4(\mu)}\right)\Big)\Bigg)
		\\
		&\leq 2\exp(-u).
	\end{align*}
	Applying Theorem 3.5 of \cite{dirksen15} now gives for any $p\geq 1$
	\begin{align*}
		&\E\left[\sup_{g\in\mathcal{G}}\vert \mathbb{M}^j_T\vert^p\right]^{1/p}
		\\
		&\leq c\Bigg(\tilde{c}_1\int_0^\infty \log\left(\mathcal{N}(u,\mathcal{G},24\sqrt{\Vert a_{jj}\Vert_{\infty}}((\kappa T)^{-1/4}d_\infty+(\kappa T)^{-1/8}d_{L^4(\mu)})) \right) \d u
		\\
		&\quad +\tilde{c}_2\int_0^\infty\sqrt{\log\left(\mathcal{N}(u,\mathcal{G},\sqrt{6\Vert a_{jj}\Vert_{\infty}} ((128)^{1/4}(\kappa T)^{-1/8}d_{L^4(\mu)}+d_{L^2(\mu))}\right) }\d u\Bigg)
		+2\sup_{g\in\mathcal{G}}\E\left[\vert \mathbb{M}^j_T(g)\vert^p \right]^{1/p},
	\end{align*}
	where we argue analogously to the proof of Proposition \ref{prop: jump moment bound}, which also gives for any $g\in\mathcal{G}$
	\begin{align*}
		&\E\left[\vert \mathbb{M}^j_T(g)\vert^p \right]^{1/p}
		\\
		&\leq 12p\sqrt{\Vert a_{jj}\Vert_\infty}\tilde{c}_1\left((\kappa T)^{-1/4}\Vert g\Vert_\infty+(\kappa T)^{-1/8}\Vert g\Vert_{L^4(\mu)}\right)
		\\&\quad+\sqrt{\frac{3}{2}\Vert a_{jj}\Vert_\infty p}\tilde{c}_2\left((128)^{1/4}(\kappa T)^{-1/8}\Vert g\Vert_{L^4(\mu)}+\Vert g\Vert_{L^2(\mu)}\right),
	\end{align*}
	concluding the proof.
\end{proof}
\begin{proof}[Proof of Lemma \ref{prop: trunc decomp}]
	By linearity it holds
	\begin{align*}
		&\frac{1}{\sqrt{T}}\int_0^Tg(X_{s-})\d X^{j,\delta}_s
		\\
		&=\mathbb{H}_T(gb^j)+\mathbb{M}^j_T(g)+\frac{1}{\sqrt{T}}\int_0^T\int_{B(0,\delta/\gamma_{\min})} g(X_{s-})\gamma^j(X_{s-})z \tilde{N}(\d s,\d z)
		\\
		&\quad+\frac{1}{\sqrt{T}}\int_0^T\int_{B(0,\delta/\gamma_{\min})^{\c}} g(X_{s-})\gamma^j(X_{s-})z N(\d s,\d z)
		\\&\quad- \frac{1}{\sqrt{T}}\int_0^Tg(X_{s-})\int_{B(0,\delta/\gamma_{\min})^{\c}} \gamma^j(X_{s-})z\nu(\d z)\d s
		\\&\quad-\frac{1}{\sqrt{T}}\sum_{0\leq s\leq T} g(X_{s-})\Delta X^j_s\1_{(\delta,\infty)}(\Vert \Delta X_s\Vert)
		\\
		&=\mathbb{H}_T(gb^j)+\mathbb{M}^j_T(g)+\frac{1}{\sqrt{T}}\int_0^T\int_{B(0,\delta/\gamma_{\min})} g(X_{s-})\gamma^j(X_{s-})z \tilde{N}(\d s,\d z)
		\\
		&\quad+\frac{1}{\sqrt{T}}\sum_{0\leq s\leq T} g(X_{s-})\Delta X^j_s \left(\1_{(\delta/\gamma_{\min},\infty)}(\Vert \Delta N_s\Vert)-\1_{(\delta,\infty)}(\Vert \Delta X_s\Vert)\right)
		\\&\quad- \frac{1}{\sqrt{T}}\int_0^Tg(X_{s-})\int_{B(0,\delta/\gamma_{\min})^{\c}} \gamma^j(X_{s-})z\nu(\d z)\d s
		\\
		&\leq \mathbb{H}_T(gb^j)+\mathbb{M}^j_T(g)+\frac{1}{\sqrt{T}}\int_0^T\int_{B(0,\delta/\gamma_{\min})} g(X_{s-})\gamma^j(X_{s-})z \tilde{N}(\d s,\d z)
		\\
		&\quad+\frac{1}{\sqrt{T}}\sum_{0\leq s\leq T} \vert g(X_{s-})\Delta X^j_s\vert \left\vert\1_{(\delta/\gamma_{\min},\infty)}(\Vert \Delta N_s\Vert)-\1_{(\delta,\infty)}(\Vert \Delta X_s\Vert)\right\vert
		\\&\quad- \frac{1}{\sqrt{T}}\int_0^Tg(X_{s-})\int_{B(0,\delta/\gamma_{\min})^{\c}} \gamma^j(X_{s-})z\nu(\d z)\d s
		\\
		&\leq \mathbb{H}_T(gb^j)+\mathbb{M}^j_T(g)+\frac{1}{\sqrt{T}}\int_0^T\int_{B(0,\delta/\gamma_{\min})} g(X_{s-})\gamma^j(X_{s-})z \tilde{N}(\d s,\d z)
		\\
		&\quad+\frac{1}{\sqrt{T}}\sum_{0\leq s\leq T} \vert g(X_{s-})\Delta X^j_s\vert \1_{(\delta/\Vert \gamma\Vert_\infty,\delta/\gamma_{\min}]}(\Vert \Delta N_s\Vert)
		\\&\quad- \frac{1}{\sqrt{T}}\int_0^Tg(X_{s-})\int_{B(0,\delta/\gamma_{\min})^{\c}} \gamma^j(X_{s-})z\nu(\d z)\d s
		\\
		&\leq \mathbb{H}_T(gb^j)+\mathbb{M}^j_T(g)+\frac{1}{\sqrt{T}}\int_0^T\int_{B(0,\delta/\gamma_{\min})} g(X_{s-})\gamma^j(X_{s-})z \tilde{N}(\d s,\d z)
		\\
		&\quad+\frac{\Vert \gamma\Vert_\infty}{\gamma_{\min}\sqrt{T}}\delta\int_0^T\int_{\delta/\Vert \gamma\Vert_\infty<\Vert z\Vert\leq \delta/\gamma_{\min}} \vert g(X_{s-})\vert N(\d s,\d z)
		\\
		&\quad+ \frac{\nu_3\Vert \gamma\Vert_\infty\sigma^2_d}{\delta^2\sqrt{T}}\int_0^T\vert g(X_{s-})\vert \d s
		\\
		&\leq \mathbb{H}_T(gb^j)+\mathbb{M}^j_T(g)+\frac{1}{\sqrt{T}}\int_0^T\int_{B(0,\delta/\gamma_{\min})} g(X_{s-})\gamma^j(X_{s-})z \tilde{N}(\d s,\d z)
		\\
		&\quad+\frac{\Vert \gamma\Vert_\infty}{\gamma_{\min}\sqrt{T}}\delta\int_0^T\int_{\delta/\Vert \gamma\Vert_\infty<\Vert z\Vert\leq \delta/\gamma_{\min}} \vert g(X_{s-})\vert \tilde{N}(\d s,\d z)
		\\
		&\quad+\frac{\Vert \gamma\Vert_\infty}{\gamma_{\min}\sqrt{T}}\delta\int_0^T\vert g(X_{s-})\vert \nu((\delta/\Vert \gamma\Vert_\infty, \delta/\gamma_{\min}])\d s
		\\
		&\quad+ \frac{\nu_3\Vert \gamma\Vert_\infty\sigma^2_d}{\delta^2\sqrt{T}}\int_0^T\vert g(X_{s-})\vert \d s
		\\
		&\leq \mathbb{H}_T(gb^j)+\mathbb{M}^j_T(g)+\frac{1}{\sqrt{T}}\int_0^T\int_{B(0,\delta/\gamma_{\min})} g(X_{s-})\gamma^j(X_{s-})z \tilde{N}(\d s,\d z)
		\\
		&\quad+\frac{\Vert \gamma\Vert_\infty}{\gamma_{\min}\sqrt{T}}\delta\int_0^T\int_{\delta/\Vert \gamma\Vert_\infty<\Vert z\Vert\leq \delta/\gamma_{\min}} \vert g(X_{s-})\vert \tilde{N}(\d s,\d z)+\frac{\Vert \gamma\Vert_\infty^4\nu_3}{\gamma_{\min}\delta^2\sqrt{T}}\int_0^T\vert g(X_{s-})\vert \d s
		\\
		&\quad+ \frac{\nu_3\Vert \gamma\Vert_\infty\sigma^2_d}{\delta^2\sqrt{T}}\int_0^T\vert g(X_{s-})\vert \d s.
	\end{align*}
	The assertion now follows by the Minkowski inequality.
\end{proof}
\begin{proof}[Proof of Proposition \ref{prop: trunc chain}]
	Theorem 2.2 in \cite{applebaum09paper} gives together with Lemma \ref{lemma: qv jump bound comp supp} for any $T,x,y>0$
	\begin{align*}
		&2\exp(-xy)
		\\
		&\geq	\P\Bigg(\vert \mathbb{J}^{j,\delta,1}_T(g)\vert>\frac{x}{\sqrt{T}}\\&\quad\qquad+\frac{1}{\sqrt{T}y} \int_0^T\int_{B(0,\delta/\gamma_{\min})}\exp(yg(X_{s-})\gamma^j(X_{s-})z))-1-yg(X_{s-})\gamma^j(X_{s-})z)\nu(\d z)\d s \Bigg)
		\\
		&\geq	\P\left(\vert\mathbb{J}^{j,\delta,1}_T(g)\vert>\frac{x}{\sqrt{T}}+\frac{y}{\sqrt{T}}\cE(\delta,y)\int_0^Tg(X_{s-})^2\d s  \right),
	\end{align*}
	where
	\[\cE(\delta,y)\coloneq \Vert \gamma\Vert_\infty^2\frac{\nu_2\exp(y\Vert \gamma\Vert_\infty\Vert g\Vert_\infty \delta/\gamma_{\min})}{2}. \]
	Hence, applying Lemma \ref{lemma: bernstein dirksen form} with $m_T=\sqrt{T}/(2\sqrt{\kappa})$ gives for large enough $T$
	\begin{align}
		&\P\left(\vert\mathbb{J}^{j,\delta,1}_T(g)\vert>\frac{x}{\sqrt{T}}+y\cE(\delta,y)32\sqrt{u}\left((T/\kappa)^{1/4}\sqrt{\mu(g^4)}+2\sqrt{u/\kappa}\Vert g^2\Vert_\infty \right)+y\sqrt{T}\cE(\delta,y)\mu(g^2) \right)\notag
		\\
		&\leq 	\P\left(\vert\mathbb{J}^{j,\delta,1}_T(g)\vert>\frac{x}{\sqrt{T}}+\frac{y}{\sqrt{T}}\cE(\delta,y)\int_0^T g(X_s)^2\d s \right)	\notag
		\\&
		\quad+\P\left(\frac{1}{\sqrt{T}}\int_0^T g(X_s)\d s>32\sqrt{u}\left(\sqrt{\Var\left(\frac{1}{\sqrt{\tau}}\int_0^\tau g(X_s)\d s \right)}+\sqrt{u/\kappa}\Vert g\Vert_\infty\right) \right)\notag
		\\
		&\leq2\exp(-xy)+ 2\exp(-u)+2\sqrt{\kappa T}c_\kappa \exp(-\sqrt{\kappa T}/2)\1_{(u,\infty)}\left(\frac{\sqrt{\kappa T}}{8}\right)\notag
		\\
		&\leq2\exp(-xy)+2\exp(-u)+2\exp(-\sqrt{\kappa T}/8)\1_{(u,\infty)}\left(\frac{\kappa\sqrt{T}}{8}\right)\notag
		\\
		&\leq2\exp(-xy)+4\exp(-u)\label{eq: trunc bern}
	\end{align}
	where we again used that for any $\tau>0$
	\[\sqrt{\Var\left(\frac{1}{\sqrt{\tau}}\int_0^\tau g^2(X_s)\d s \right)}\leq \sqrt{\tau\mu(g^4)}. \]
	Now for $u>0,$ setting $x_u=u\left(\sqrt{T\mu(g^2)/u} +T^{3/8}\Vert g\Vert_{L^4(\mu)}+T^{1/4}\Vert g\Vert_\infty\right),y_u=u/x_u$ gives
	\begin{align*}
		&\frac{x_u}{\sqrt{T}}+y_u\cE(\delta,y_u)32\sqrt{u}\left((T/\kappa)^{1/4}\sqrt{\mu(g^4)}+2\sqrt{u/\kappa}\Vert g^2\Vert_\infty \right)+y_u\sqrt{T}\cE(\delta,y_u)\mu(g^2)
		\\
		&\leq \sqrt{u}\left(\Vert g\Vert_{L^2(\mu)} +\sqrt{u}\left(T^{-1/8}\Vert g\Vert_{L^4(\mu)}+T^{-1/4}\Vert g\Vert_\infty\right)\right)
		\\&\quad+y_u\cE(\delta,y_u)32\sqrt{u}\left((T/\kappa)^{1/4}\sqrt{\mu(g^4)}+2\sqrt{u/\kappa}\Vert g^2\Vert_\infty \right)+y_u\sqrt{T}\cE(\delta,y_u)\mu(g^2)
		\\
		&\leq \sqrt{u}\left((1+\cE(\delta,y_u))\Vert g\Vert_{L^2(\mu)}+32\cE(\delta,y_u)T^{-1/8}\kappa^{-1/4}\Vert g\Vert_{L^4(\mu)} \right)
		\\&\quad+u\left(T^{-1/8}\Vert g\Vert_{L^4(\mu)}+T^{-1/4}\left(1+64\cE(\delta,y_u)\kappa^{-1/2} \right)\Vert g\Vert_\infty\right).
	\end{align*}
	Arguing as in the derivation of \eqref{eq: conc jump} this implies together with \eqref{eq: trunc bern} and 
	\[\cE(\delta,y_u)\leq  \Vert \gamma\Vert_\infty^2\frac{\nu_2\exp(\alpha\Vert \gamma\Vert_\infty \sigma^{-1}_d)}{2}\eqcolon \cE_\alpha, \]
	for $\delta\leq \alpha T^{1/4},$
	\begin{align*}
		&\P\Big(\vert\mathbb{J}^{j,\delta,1}_T(g)\vert>\sqrt{3u}\left((1+\cE_\alpha)\Vert g\Vert_{L^2(\mu)}+32\cE_\alpha T^{-1/8}\kappa^{-1/4}\Vert g\Vert_{L^4(\mu)} \right)
		\\&\qquad+3u\left(T^{-1/8}\Vert g\Vert_{L^4(\mu)}+T^{-1/4}\left(1+64\cE_\alpha\kappa^{-1/2} \right)\Vert g\Vert_\infty\right)\Big)\notag
		\\
		&\leq2\exp(-u),
	\end{align*}
	which allows us to apply Theorem 3.5 and Lemma A.2 of \cite{dirksen15}. This gives for any $p\geq 1$
	\begin{align*}
		&\E\left[\sup_{g\in\mathcal{G}}\vert \mathbb{J}^{j,\delta,1}_T\vert^p\right]^{1/p}
		\\
		&\leq C_1\Bigg(\int_0^\infty \log\left(\mathcal{N}\left(u,\mathcal{G},3\left(T^{-1/8}\Vert g\Vert_{L^4(\mu)}+T^{-1/4}\left(1+64\cE_\alpha\kappa^{-1/2} \right)\Vert g\Vert_\infty\right)\right) \right) \d u
		\\
		&\quad +\int_0^\infty\sqrt{\log\left(\mathcal{N}\left(u,\mathcal{G},\sqrt{3}\left((1+\cE_\alpha)\Vert g\Vert_{L^2(\mu)}+32\cE_\alpha T^{-1/8}\kappa^{-1/4}\Vert g\Vert_{L^4(\mu)} \right)\right)\right) }\d u\Bigg)
		\\
		&\quad+2\sup_{g\in\mathcal{G}}\E\left[\vert \mathbb{J}^{j,\delta,1}_T(g)\vert^p \right]^{1/p}
		\\
		&\leq C_1\Bigg(\int_0^\infty \log\left(\mathcal{N}\left(u,\mathcal{G},3\left(T^{-1/8}\Vert g\Vert_{L^4(\mu)}+T^{-1/4}\left(1+64\cE_\alpha\kappa^{-1/2} \right)\Vert g\Vert_\infty\right)\right) \right) \d u
		\\
		&\quad +\int_0^\infty\sqrt{\log\left(\mathcal{N}\left(u,\mathcal{G},\sqrt{3}\left((1+\cE_\alpha)\Vert g\Vert_{L^2(\mu)}+32\cE_\alpha T^{-1/8}\kappa^{-1/4}\Vert g\Vert_{L^4(\mu)} \right)\right)\right) }\d u\Bigg)
		\\
		&\quad+\sup_{g\in\mathcal{G}}\Bigg(3p\tilde{c}_1\left(T^{-1/8}\Vert g\Vert_{L^4(\mu)}+T^{-1/4}\left(1+64\cE_\alpha\kappa^{-1/2} \right)\Vert g\Vert_{\infty}\right)
		\\&\quad+\sqrt{3p}\tilde{c}_2\left((1+\cE_\alpha)\Vert g\Vert_{L^2(\mu)}+32\cE_\alpha T^{-1/8}\kappa^{-1/4}\Vert g\Vert_{L^4(\mu)} \right) \Bigg),
	\end{align*}
	concluding the proof of the first assertion. For the proof of the second assertion, Theorem 2.2 in \cite{applebaum09paper} gives for any $T,x,y>0,$ together with Taylor's theorem
	\begin{align*}
		&2\exp(-xy)
		\\
		&\geq	\P\left(\vert \mathbb{J}^{\delta,2}_T(g)\vert>\frac{x}{\sqrt{T}}+\frac{1}{\sqrt{T}y} \int_0^T\int_{\delta/\Vert \gamma\Vert_\infty<\Vert z\Vert\leq \delta/\gamma_{\min}}\exp(yg(X_{s-}))-1-yg(X_{s-})\nu(\d z)\d s \right)
		\\
		&\geq	\P\left(\vert\mathbb{J}^{\delta,2}_T(g)\vert>\frac{x}{\sqrt{T}}+\frac{y}{2\sqrt{T}}\exp(y\Vert g\Vert_\infty)\nu(B(0,\delta/\Vert \gamma\Vert_\infty)^{\c})\int_0^Tg(X_{s-})^2\d s  \right)
		\\
		&\geq	\P\left(\vert\mathbb{J}^{\delta,2}_T(g)\vert>\frac{x}{\sqrt{T}}+\frac{\nu_2\Vert \gamma\Vert_\infty^2 y}{2\delta^2\sqrt{T}}\exp(y\Vert g\Vert_\infty)\int_0^Tg(X_{s-})^2\d s  \right).
	\end{align*}
	Hence arguing as above, Lemma \ref{lemma: bernstein dirksen form} gives with $m_T=\sqrt{T}(2\sqrt{\kappa})$ that for large enough $T$ it holds for any $u>0$
	\begin{align*}
		&\P\Big(\vert\mathbb{J}^{\delta,2}_T(g)\vert>\frac{x}{\sqrt{T}}+16\frac{\nu_2\Vert \gamma\Vert_\infty^2 y}{\delta^2}\exp(y\Vert g\Vert_\infty)\sqrt{u}\left((T/\kappa)^{1/4}\sqrt{\mu(g^4)}+2\sqrt{u/\kappa}\Vert g^2\Vert_\infty \right)
		\\&\qquad +\sqrt{T}\frac{\nu_2\Vert \gamma\Vert_\infty^2 y}{2\delta^2}\exp(y\Vert g\Vert_\infty)\mu(g^2) \Big)
		\\
		&\leq2\exp(-xy)+ 4\exp(-u),
	\end{align*}
	and choosing $x_u=u\delta^{-1}\left(\sqrt{T\mu(g^2)/u} +T^{3/8}\Vert g\Vert_{L^4(\mu)}+T^{1/4}\Vert g\Vert_\infty\right),y_u=u/x_u$ gives for large enough $T$
	\begin{align*}
		&\frac{x_u}{\sqrt{T}}+16\frac{\nu_2\Vert \gamma\Vert_\infty^2 y_u}{\delta^2}\exp(y_u\Vert g\Vert_\infty)\sqrt{u}\left((T/\kappa)^{1/4}\sqrt{\mu(g^4)}+2\sqrt{u/\kappa}\Vert g^2\Vert_\infty \right)
		\\&\quad+\frac{\nu_2\Vert \gamma\Vert_\infty^2 y_u}{2\delta^2\sqrt{T}}\exp(y_u\Vert g\Vert_\infty)\mu(g^2) 
		\\
		&\leq\delta^{-1}\Vert g\Vert_{L^2(\mu)}\sqrt{u} +u\delta^{-1}\left(T^{-1/8}\Vert g\Vert_{L^4(\mu)}+T^{-1/4}\Vert g\Vert_\infty\right)
		\\&\quad+16\exp(\alpha)\frac{\nu_2\Vert \gamma\Vert_\infty^2 y_u}{\delta^2}\sqrt{u}\left((T/\kappa)^{1/4}\sqrt{\mu(g^4)}+2\sqrt{u/\kappa}\Vert g^2\Vert_\infty \right)
		+\sqrt{T}\frac{\nu_2\Vert \gamma\Vert_\infty^2 y_u}{2\delta^2}\exp(\alpha)\mu(g^2) 
		\\
		&\leq\frac{\sqrt{u}}{\delta}\left(\left(1+\frac{\nu_2\Vert \gamma\Vert_\infty^2}{2}\exp(\alpha)\right)\Vert g\Vert_{L^2(\mu)}+16\exp(\alpha)\nu_2\Vert \gamma\Vert_\infty^2\kappa^{-1/4}T^{-1/8}\Vert g\Vert_{L^4(\mu)}\right) \\&\quad+\frac{u}{\delta}\left(T^{-1/8}\Vert g\Vert_{L^4(\mu)}+T^{-1/4}\Vert g\Vert_\infty\left(1+32\exp(\alpha)\nu_2\Vert \gamma\Vert_\infty^2\kappa^{-1/2}\right) \right).
	\end{align*}
	Now following the same steps as in the proof of the first assertion concludes the proof.
\end{proof}
\section{Proofs for Section \ref{sec: drift est}.}
\begin{proof}[Proof for Theorem \ref{thm: general rate exp}]
	For notational convenience, we suppress the dependence on $j$ throughout this proof. Denoting 
	\[d^2_{\mathbb{G},\tau}(f_1,f_2)=\Vert f_1-f_2\Vert_{\mathbb{G},\tau}^2= \operatorname{Var}\left(\frac{1}{\sqrt{\tau}}\int_0^{\tau} (f_1-f_2)(X_s)\d s\right),\quad f_1,f_2\in L^2(\mu), \] and $\Vert b\Vert_{D,\infty}=\sup_{x: d(x,D)\leq1}\vert b(x)\vert,$ Theorem 3.1 in \cite{dexheimeraihp} gives by choosing $m_T=(p/\kappa)\log(T),$ that there exists $\tau\in [m_t,2m_T],$ such that for large enough $T$
	\begin{align*}
		&\E\left[\sup_{g\in\mathcal{G}_{\h}} \vert \mathbb{H}_T(gb)-\sqrt{T}\mu(gb)\vert^p\right]^{1/p}
		\\
		&\leq C_1\left(\int_0^\infty \log\left(\mathcal{N}\left(u,\mathcal{G}_{\h}b,\frac{2p\log(T)}{\kappa\sqrt{T}}d_\infty\right)\d u\right)+\int_0^\infty \sqrt{\log\left(\mathcal{N}\left(u,\mathcal{G}_{\h}b,d_{\mathbb{G},\tau }\right)\right)}  \right)\d u
		\\&\quad +4\sup_{g\in\mathcal{G}_{\h}}\left(\tilde{c}_1\frac{2p}{\kappa\sqrt{T}}\Vert gb\Vert_\infty +\tilde{c}_2\sqrt{p}\Vert gb\Vert_{\mathbb{G},\tau}+\frac{1}{2\sqrt{T}}\Vert gb\Vert_\infty  \right)
		\\
		&\leq C_1\left(\int_0^\infty \log\left(\mathcal{N}\left(u,\mathcal{G}_{\h}b,\frac{2p\log(T)}{\kappa\sqrt{T}}d_\infty\right) \right)\d u+\int_0^\infty \sqrt{\log\left(\mathcal{N}\left(u,\mathcal{G}_{\h}b,c V(\h)\psi_d(V(\h))d_\infty\right)\right)} \d u \right)
		\\&\quad +4\sup_{g\in\mathcal{G}_{\h}}\left(\tilde{c}_1\frac{2p}{\kappa\sqrt{T}}\Vert gb\Vert_\infty +c\sqrt{ p}\Vert gb\Vert_\infty  V(\h)\psi_d(V(\h))+\frac{1}{2\sqrt{T}}\Vert gb\Vert_\infty  \right)
		\\
		&\leq c\Bigg(\frac{p\log(T)}{\sqrt{T}}\int_0^{ 2\Vert b\Vert_{D,\infty} \Vert K\Vert_\infty}\log\left(\mathcal{N}\left(u,\mathcal{G}_{\h}b,d_\infty\right) \right)\d u\\&\quad+  V(\h)\psi_d(V(\h))\int_0^{2\Vert b\Vert_{D,\infty} \Vert K\Vert_\infty} \sqrt{\log\left(\mathcal{N}\left(u,\mathcal{G}_{\h}b,d_\infty\right)\right)} \d u \Bigg)
		+c\sqrt{\log(T)} V(\h)\psi_d(V(\h)),
	\end{align*}
	where we used Proposition 2.5 in \cite{dexheimeraihp} for bounding the variance distance with
	\begin{equation}\label{def: psi}
	\psi_d(x)\coloneq \begin{cases}
	\sqrt{1-\log(x)},& d\leq 2,\\
	x^{1/d-1/2}, & d\geq 3.
\end{cases} 
	\end{equation}
 For the bound of the variance distance in the case $d=1$ note that by slightly adapting the proof for $d=2$ in Proposition 2.5 of \cite{dexheimeraihp} the same result as in $d=2$ can be achieved. 
	As the proof of Lemma D.2 in \cite{aeckdriftsupp} gives for any $u>0$
	\begin{align}\label{eq: covering number bound}
		\mathcal{N}(u,\mathcal{G}_{\h}b,d_\infty)\leq \left(\frac{2L\Vert b\Vert_{D,\infty}\operatorname{diam}(D)\sum_{i=1}^{d}h_i^{-1} }{u} \right)^d,
	\end{align}
	where $L$ is the Lipschitz constant of $K,$ we get for large enough $T$
	\begin{align}
		&\E\left[\sup_{g\in\mathcal{G}_{\h}} \vert \mathbb{H}_T(gb)-\sqrt{T}\mu(gb)\vert^p\right]^{1/p}\notag
		\\
		&\leq c\Bigg(\frac{\log(T)}{\sqrt{T}} \log\left(\sum_{i=1}^{d}h_i^{-1} \right)^2 + V(\h)\psi_d(V(\h))\sqrt{\log\left(\sum_{i=1}^{d}h_i^{-1} \right) }+\sqrt{\log(T) }\notag V(\h)\psi_d(V(\h)) \Bigg)
		\\
		&\leq V(\h)^{1/2}, \label{eq: moment bound dens}
	\end{align}
	where we used the inequality 
	\begin{equation}\label{eq: sqrt entropy ineq}
		\int_0^C\sqrt{\log(M/u)}\d u\leq 4 C\sqrt{\log(M/C)},\quad\mathrm{if}\quad \log(M/C)\geq 2
	\end{equation} (see p. 592 of \cite{gine2015}).
	Furthermore, Proposition \ref{prop: cont moment bound} gives together with \eqref{eq: covering number bound} for large enough $T$
	\begin{align*}
		&\E\left[\sup_{g\in\mathcal{G}_{\h}}\vert \mathbb{M}_T(g)\vert^p\right]^{1/p}
		\\
		&\leq C_1\Bigg(\int_0^\infty \log\left(\mathcal{N}(u,\mathcal{G}_{\h},24\sqrt{\Vert a\Vert_{\infty}}((\kappa T)^{-1/4}d_\infty+(\kappa T)^{-1/8}d_{L^4(\mu)})) \right) \d u
		\\
		&\quad +\int_0^\infty\sqrt{\log\left(\mathcal{N}(u,\mathcal{G}_{\h},\sqrt{6\Vert a\Vert_{\infty}} (128^{1/4}(\kappa T)^{-1/8}d_{L^4(\mu)}+d_{L^2(\mu)}))\right) }\d u\Bigg)
		\\&\quad
		+ 24p\sqrt{\Vert a_{jj}\Vert_\infty}\tilde{c}_1\sup_{g\in\mathcal{G}_{\h}}\left((\kappa T)^{-1/4}\Vert g\Vert_\infty+(\kappa T)^{-1/8}\Vert g\Vert_{L^4(\mu)}\right)
		\\&\quad+\sqrt{6\Vert a_{jj}\Vert_\infty p}\tilde{c}_2\sup_{g\in\mathcal{G}_{\h}}\left((128)^{1/4}(\kappa T)^{-1/8}\Vert g\Vert_{L^4(\mu)}+\Vert g\Vert_{L^2(\mu)}\right),
		\\
		&\leq C_1\Bigg(24\sqrt{\Vert a\Vert_{\infty}}((\kappa T)^{-1/4}+(\kappa T)^{-1/8}(\Vert \rho\Vert_{\infty}V(\h))^{1/4})\int_0^{2\Vert K\Vert_\infty} \log\left(\mathcal{N}(u,\mathcal{G}_{\h},d_\infty) \right) \d u
		\\
		&\quad +\sqrt{6\Vert a\Vert_{\infty}} (128^{1/4}(\kappa T)^{-1/8}(\Vert \rho\Vert_{\infty}V(\h))^{1/4}+(\Vert \rho\Vert_{\infty}V(\h))^{1/2})\int_0^{2\Vert K\Vert_\infty}\sqrt{\log\left(\mathcal{N}(u,\mathcal{G}_{\h},d_\infty)\right) }\d u\Bigg)
		\\&\quad
		+ 24p\sqrt{\Vert a_{jj}\Vert_\infty}\tilde{c}_1\Vert K\Vert_\infty\left((\kappa T)^{-1/4}+(\kappa T)^{-1/8}(\Vert \rho\Vert_{\infty}V(\h))^{1/4}\right)
		\\&\quad+\sqrt{6\Vert a_{jj}\Vert_\infty p}\tilde{c}_2\Vert K\Vert_\infty\left((128)^{1/4}(\kappa T)^{-1/8}(\Vert \rho\Vert_{\infty}V(\h))^{1/4}+(\Vert \rho\Vert_{\infty}V(\h))^{1/2}\right)
		\\
		&\leq C_1\Bigg(d\sqrt{\Vert a\Vert_{\infty}\Vert \rho\Vert_{\infty}V(\h)}\log\left(\sum_{i=1}^d h_i^{-1}\right)^{-1/2}\int_0^{2\Vert K\Vert_\infty} \log\left(\frac{2L\sum_{i=1}^{d}h_i^{-1} \operatorname{diam}(D)}{u} \right) \d u
		\\
		&\quad +3\sqrt{d\Vert a\Vert_{\infty}\Vert \rho\Vert_{\infty}V(\h)} \int_0^{2\Vert K\Vert_\infty}\sqrt{\log\left(\frac{2L\sum_{i=1}^{d}h_i^{-1} \operatorname{diam}(D)}{u} \right)}\d u\Bigg)
		\\&\quad
		+4\sqrt{\theta\Vert a_{jj}\Vert_\infty\Vert \rho\Vert_{\infty} }\tilde{c}_2\Vert K\Vert_\infty V(\h)^{1/2}\log\left(\sum_{i=1}^d h_i^{-1}\right)^{1/2}
		\\
		&\leq C_1\Bigg(4d\Vert K\Vert_\infty\sqrt{\Vert a\Vert_{\infty}\Vert \rho\Vert_{\infty}V(\h)}\log\left(\sum_{i=1}^d h_i^{-1}\right)^{1/2} 
		+25\Vert K\Vert_\infty\sqrt{d\Vert a\Vert_{\infty}\Vert \rho\Vert_{\infty}V(\h)}\log\left(\sum_{i=1}^{d}h_i^{-1}  \right)^{1/2}\Bigg)
		\\&\quad+4\sqrt{\theta\Vert a_{jj}\Vert_\infty\Vert \rho\Vert_{\infty} }\tilde{c}_2\Vert K\Vert_\infty V(\h)^{1/2}\log\left(\sum_{i=1}^d h_i^{-1}\right)^{1/2}
		\\
		&\leq 29C_1d\Vert K\Vert_\infty\sqrt{\Vert a\Vert_{\infty}\Vert \rho\Vert_{\infty}V(\h)}\log\left(\sum_{i=1}^d h_i^{-1}\right)^{1/2} 
		+4\sqrt{\theta\Vert a_{jj}\Vert_\infty\Vert \rho\Vert_{\infty} }\tilde{c}_2\Vert K\Vert_\infty V(\h)^{1/2}\log\left(\sum_{i=1}^d h_i^{-1}\right)^{1/2}\stepcounter{equation}\tag{\theequation}\label{eq: moment bound cont}
	\end{align*}
	where we used $V(\h)\geq T^{-1/2}\log(\sum_{i=1}^d h_i^{-1})^4$ and \eqref{eq: sqrt entropy ineq}. Turning our attention to the jump part, we obtain for large enough $T$ by Proposition \ref{prop: jump moment bound}
	\begin{align*}
		&\E\Big[\sup_{g\in\mathcal{G}_{\h}}\vert \mathbb{J}^j_T(g)\vert^p\Big]^{1/p}
		\\ &\leq C_1\Bigg(\int_0^\infty \log\left(\mathcal{N}(u,\mathcal{G},3T^{-1/8}d_{L^4(\mu)}+3(64c_1\kappa^{-1/2}+1)T^{-1/4}d_\infty) \right) \d u
		\\
		&\quad +\int_0^\infty\sqrt{\log\left(\mathcal{N}(u,\mathcal{G},\sqrt{3}(1+c_1)d_{L^2(\mu)}+\sqrt{3072}c_1\kappa^{-1/4}T^{-1/8}d_{L^4(\mu)}) \right) }\d u\Bigg)
		\\
		&\quad+3p\tilde{c}_1\sup_{g\in\mathcal{G}_{\h}}\left(T^{-1/8}\Vert g\Vert_{L^4(\mu)}+(64c_1\kappa^{-1/2}+1)T^{-1/4}\Vert g\Vert_\infty  \right)
		\\&\quad+\sqrt{3p}\tilde{c}_2\sup_{g\in\mathcal{G}_{\h}}\left((1+c_1)\Vert g\Vert_{L^2(\mu)}+32c_1\kappa^{-1/4}T^{-1/8}\Vert g\Vert_{L^4(\mu)}\right)
		\\ &\leq C_1\Bigg((3T^{-1/8}(\Vert \rho\Vert_{\infty}V(\h))^{1/4}+3(64c_1\kappa^{-1/2}+1)T^{-1/4})\int_0^{2\Vert K\Vert_\infty} \log\left(\mathcal{N}(u,\mathcal{G},d_\infty) \right) \d u
		\\
		&\quad +(\sqrt{3}(1+c_1)(\Vert \rho\Vert_{\infty}V(\h))^{1/2}+\sqrt{3072}c_1\kappa^{-1/4}T^{-1/8}(\Vert \rho\Vert_{\infty}V(\h))^{1/4})\int_0^{2\Vert K\vert_\infty}\sqrt{\log\left(\mathcal{N}(u,\mathcal{G},d_\infty \right) }\d u\Bigg)
		\\
		&\quad+3p\tilde{c}_1\Vert K\Vert_\infty\left(T^{-1/8}(\Vert \rho\Vert_{\infty}V(\h))^{1/4}+(64c_1\kappa^{-1/2}+1)T^{-1/4}  \right)
		\\&\quad+\sqrt{3p}\tilde{c}_2\Vert K\Vert_\infty\left((1+c_1)(\Vert \rho\Vert_{\infty}V(\h))^{1/2}+32c_1\kappa^{-1/4}T^{-1/8}(\Vert \rho\Vert_{\infty}V(\h))^{1/4}\right)
		\\ 
		&\leq C_1\Bigg((\Vert \rho\Vert_{\infty}V(\h))^{1/2}\log\left(\sum_{i=1}^d h_i^{-1}\right)^{-1/2}\int_0^{2\Vert K\Vert_\infty} \log\left(\mathcal{N}(u,\mathcal{G},d_\infty) \right) \d u
		\\
		&\quad +2(1+c_1)(\Vert \rho\Vert_{\infty}V(\h))^{1/2}\int_0^{2\Vert K\Vert_\infty}\sqrt{\log\left(\mathcal{N}(u,\mathcal{G},d_\infty \right) }\d u\Bigg)
		\\&\quad+3\sqrt{\Vert \rho\Vert_{\infty}\theta}\tilde{c}_2\Vert K\Vert_\infty(1+c_1)V(\h)^{1/2}\log\left(\sum_{i=1}^d h_i^{-1}\right)^{1/2}
		\\ 
		&\leq C_1\Bigg(4\Vert K\Vert_\infty (\Vert \rho\Vert_{\infty}V(\h))^{1/2}\log\left(\sum_{i=1}^d h_i^{-1}\right)^{1/2}  
		+17\Vert K\Vert_\infty(1+c_1)(\Vert \rho\Vert_{\infty}V(\h))^{1/2}\log\left(\sum_{i=1}^d h_i^{-1}\right)^{1/2}  \Bigg)
		\\&\quad+3\sqrt{\Vert \rho\Vert_{\infty}\theta}\tilde{c}_2\Vert K\Vert_\infty(1+c_1)V(\h)^{1/2}\log\left(\sum_{i=1}^d h_i^{-1}\right)^{1/2}
		\\ 
		&\leq C_1\Vert K\Vert_\infty (\Vert \rho\Vert_{\infty}V(\h))^{1/2}\log\left(\sum_{i=1}^d h_i^{-1}\right)^{1/2}  (4
		+17(1+c_1)  )\notag
		\\
		&\quad
		+3\sqrt{\Vert \rho\Vert_{\infty}\theta}\tilde{c}_2\Vert K\Vert_\infty(1+c_1)V(\h)^{1/2}\log\left(\sum_{i=1}^d h_i^{-1}\right)^{1/2}, \stepcounter{equation}\tag{\theequation}\label{eq: moment bound jump}
	\end{align*}
	where we argued analogously to the derivation of \eqref{eq: moment bound cont}. Combining \eqref{eq: moment bound dens}, \eqref{eq: moment bound cont} and \eqref{eq: moment bound jump} with the Minkowski inequality then shows
	\begin{align}\label{eq: moment adap}
		\E\left[\sup_{g\in\mathcal{G}_{\h}} \vert \mathbb{I}^j_T(g)-\sqrt{T}\mu(gb^j)\vert^p\right]^{1/p}\leq cV(\h)^{1/2}\log\left(\sum_{i=1}^d h_i^{-1}\right)^{1/2},
	\end{align}
	which completes the proof of the first assertion as
	\begin{align*}
		\mathcal{R}^{(p)}_\infty(b^j\rho,\bar{b}^{(j)}_{\h,T};D)& \leq 	\mathcal{B}_{b^j\rho}(\h)+T^{-1/2}V(\h)^{-1}\E\left[\sup_{g\in\mathcal{G}_{\h}} \vert \mathbb{I}^j_T(g)-\sqrt{T}\mu(gb^j)\vert^p\right]^{1/p}.
	\end{align*}
\end{proof}
\begin{proof}[Proof of Theorem \ref{thm: rate trunc est}]
	Firstly, it holds
	\begin{align*}
		\mathcal{R}^{(p)}_\infty(b^j\rho,\bar{b}^{(j),\delta}_{\h,T};D)& \leq 	\mathcal{B}_{b^j\rho}(\h)+T^{-1/2}V(\h)^{-1}\E\left[\sup_{g\in\mathcal{G}_{\h}} \vert \mathbb{I}^{j,\delta}_T(g)-\sqrt{T}\mu(gb^j)\vert^p\right]^{1/p}.
	\end{align*}
	and Lemma \ref{prop: trunc decomp} gives
	\begin{align*}
		&\E\left[\sup_{g\in\mathcal{G}_{\h}} \vert \mathbb{I}^{j,\delta}_T(g)-\sqrt{T}\mu(gb^j)\vert^p\right]^{1/p}
		\\
		&\leq \E\left[\sup_{g\in\mathcal{G}_{\h}} \vert \mathbb{H}_T(gb^j)-\sqrt{T}\mu(gb^j)\vert^p\right]^{1/p}+\E\left[\sup_{g\in\mathcal{G}_{\h}}\vert \mathbb{M}^j_T(g)\vert^p\right]^{1/p}+\E\left[\sup_{g\in\mathcal{G}_{\h}}\vert \mathbb{J}^{j,\delta,1}_T(g)\vert^p\right]^{1/p}
		\\
		&\quad+\frac{\Vert \gamma\Vert_\infty\delta}{\gamma_{\min}}\E\left[\sup_{g\in\mathcal{G}_{\h}}\vert \mathbb{J}^{\delta,2}_T(g)\vert^p\right]^{1/p}+\frac{c_2}{\delta^2}\E\left[\sup_{g\in\mathcal{G}_{\h}} \vert \mathbb{H}_T(\vert g\vert)-\sqrt{T}\mu(\vert g\vert)\vert^p\right]^{1/p}+\frac{c_2\sqrt{T}}{\delta^2}\sup_{g\in\mathcal{G}_{\h}}\mu(\vert g\vert).
	\end{align*}
	Now Proposition \ref{prop: trunc chain} yields for large enough $T$
	\begin{align*}
		&\E\left[\sup_{g\in\mathcal{G}_{\h}}\vert \mathbb{J}^{j,\delta,1}_T\vert^p\right]^{1/p}
		\\
		&\leq C_1\Bigg(\int_0^\infty \log\left(\mathcal{N}\left(u,\mathcal{G}_{\h},3\left(T^{-1/8}d_{L^4(\mu)}+T^{-1/4}\left(1+64\cE_\alpha\kappa^{-1/2} \right)d_\infty\right)\right) \right) \d u
		\\
		&\quad +\int_0^\infty\sqrt{\log\left(\mathcal{N}\left(u,\mathcal{G}_{\h},\sqrt{3}\left((1+\cE_\alpha)d_{L^2(\mu)}+32\cE_\alpha T^{-1/8}\kappa^{-1/4}d_{L^4(\mu)} \right)\right)\right) }\d u\Bigg)
		\\
		&\quad+\sup_{g\in\mathcal{G}_{\h}}\Bigg(3p\tilde{c}_1\left(T^{-1/8}\Vert g\Vert_{L^4(\mu)}+T^{-1/4}\left(1+64\cE_\alpha\kappa^{-1/2} \right)\Vert g\Vert_{\infty}\right)
		\\&\quad+\sqrt{3p}\tilde{c}_2\left((1+\cE_\alpha)\Vert g\Vert_{L^2(\mu)}+32\cE_\alpha T^{-1/8}\kappa^{-1/4}\Vert g\Vert_{L^4(\mu)} \right) \Bigg)
		\\
		&\leq C_1\Bigg(\int_0^\infty \log\left(\mathcal{N}\left(u,\mathcal{G}_{\h},3d_\infty\left(T^{-1/8}(\Vert \rho\Vert_{\infty}V(\h))^{1/4}+T^{-1/4}\left(1+64\cE_\alpha\kappa^{-1/2} \right)\right)\right) \right) \d u
		\\
		&\quad +\int_0^\infty\sqrt{\log\left(\mathcal{N}\left(u,\mathcal{G}_{\h},\sqrt{3}d_\infty\left((1+\cE_\alpha)(\Vert \rho\Vert_{\infty}V(\h))^{1/2}+32\cE_\alpha T^{-1/8}(\Vert \rho\Vert_{\infty}V(\h))^{1/4}\kappa^{-1/4} \right)\right)\right) }\d u\Bigg)
		\\
		&\quad+\sup_{g\in\mathcal{G}_{\h}}\Bigg(3p\tilde{c}_1\left(T^{-1/8}(\Vert \rho\Vert_{\infty}V(\h))^{1/4}\Vert g\Vert_\infty+T^{-1/4}\left(1+64\cE_\alpha\kappa^{-1/2} \right)\Vert g\Vert_{\infty}\right)
		\\&\quad+\sqrt{3p}\tilde{c}_2\left((1+\cE_\alpha)(\Vert \rho\Vert_{\infty}V(\h))^{1/2}\Vert g\Vert_\infty+32\cE_\alpha T^{-1/8}\kappa^{-1/4}(\Vert \rho\Vert_{\infty}V(\h))^{1/4}\Vert g\Vert_{\infty} \right) \Bigg)
		\\
		&\leq C_1\Bigg(4(\Vert \rho\Vert_{\infty}V(\h))^{1/2}\log(\sum_{i=1}^d h_i^{-1})^{-1}\int_0^{2\Vert K\Vert_\infty} \log\left(\mathcal{N}\left(u,\mathcal{G}_{\h},d_\infty\right)\right)  \d u
		\\
		&\quad +3(1+\cE_\alpha)(\Vert \rho\Vert_{\infty}V(\h))^{1/2}\int_0^{2\Vert K\Vert_\infty}\sqrt{\log\left(\mathcal{N}\left(u,\mathcal{G}_{\h},d_\infty\right)\right) }\d u\Bigg)
		\\&\quad+3\sqrt{\Vert \rho\Vert_{\infty}\theta\log\left(\sum_{i=1}^d h_i^{-1}\right)}\tilde{c}_2(1+\cE_\alpha)V(\h)^{1/2}\Vert K\Vert_\infty
		\\
		&\leq C_1\Bigg(5dV(\h)^{1/2}\log(\sum_{i=1}^d h_i^{-1})^{-1}\int_0^{2\Vert K\Vert_\infty} \log\left(\frac{\sum_{i=1}^{d}h_i^{-1} }{u}\right)  \d u
		\\
		&\quad +4\sqrt{d}(1+\cE_\alpha)V(\h)^{1/2}\int_0^{2\Vert K\Vert_\infty}\sqrt{\log\left(\frac{\sum_{i=1}^{d}h_i^{-1}}{u}\right) }\d u\Bigg)
		\\&\quad+3\sqrt{\theta\log\left(\sum_{i=1}^d h_i^{-1}\right)}\tilde{c}_2(1+\cE_\alpha)V(\h)^{1/2}\Vert K\Vert_\infty
		\\
		&\leq 
		V(\h)^{1/2}\sqrt{\Vert \rho\Vert_{\infty}\log\left(\sum_{i=1}^{d}h_i^{-1}\right) }\Vert K\Vert_\infty(1+\cE_\alpha)\left(64C_1 \sqrt{d}+3\sqrt{\theta}\tilde{c}_2\right),
	\end{align*}
	where we argued analogously to the derivation of \eqref{eq: moment bound cont}. Similarly it also follows by Proposition \ref{prop: trunc chain} that for large enough $T$
	\begin{align*}
		&\delta\E\left[\sup_{g\in\mathcal{G}_{\h}}\vert \mathbb{J}^{\delta,2}_T(g)\vert^p\right]^{1/p}
		\\
		&\leq \delta C_1\Bigg(\int_0^\infty \log\left(\mathcal{N}\left(u,\mathcal{G},3\delta^{-1}\left(T^{-1/8}d_{L^4(\mu)}+T^{-1/4}d_\infty\left(1+32\exp(\alpha)\nu_2\Vert \gamma\Vert_\infty^2\kappa^{-1/2}\right) \right)\right) \right) \d u
		\\
		&\quad +\int_0^\infty\sqrt{\log\left(\mathcal{N}\left(u,\mathcal{G}_{\h},\sqrt{3}\delta^{-1}\left(\left(1+\frac{\nu_2\Vert \gamma\Vert_\infty^2}{2}\exp(\alpha)\right)d_{L^2(\mu)}+16\exp(\alpha)\nu_2\Vert \gamma\Vert_\infty^2\kappa^{-1/4}T^{-1/8}d_{L^4(\mu)}\right)\right)\right) }\d u\Bigg)
		\\
		&\quad+\sup_{g\in\mathcal{G}_{\h}}\Bigg(3p\tilde{c}_1\left(T^{-1/8}\Vert g\Vert_{L^4(\mu)}+T^{-1/4}\Vert g\Vert_\infty\left(1+32\exp(\alpha)\nu_2\Vert \gamma\Vert_\infty^2\kappa^{-1/2}\right) \right)
		\\&\quad+\sqrt{3p}\tilde{c}_2\left(\left(1+\frac{\nu_2\Vert \gamma\Vert_\infty^2}{2}\exp(\alpha)\right)\Vert g\Vert_{L^2(\mu)}+16\exp(\alpha)\nu_2\Vert \gamma\Vert_\infty^2\kappa^{-1/4}T^{-1/8}\Vert g\Vert_{L^4(\mu)}\right) \Bigg)
		\\
		&\leq V(\h)^{1/2}\sqrt{\Vert \rho\Vert_{\infty}\log\left(\sum_{i=1}^{d}h_i^{-1}\right) }\Vert K\Vert_\infty\left(1+\frac{\nu_2\Vert \gamma\Vert_\infty^2}{2}\exp(\alpha)\right)\left(64\sqrt{d}C_1+3\sqrt{\theta}\tilde{c}_2\right).
	\end{align*}
	Arguing as in the proof of Theorem \ref{thm: general rate exp}, i.e.\ applying Proposition \ref{prop: cont moment bound} and Theorem 3.1 in \cite{dexheimer2020mixing} now concludes the proof.
\end{proof}
\begin{proof}[Proof of Corollary \ref{cor: rate}]
	As the proofs of \hyperref[cor: a]{a)} and \hyperref[cor: b]{b)} are analogous, we only give the proof for the first assertion.
	Arguing as in the derivation of \eqref{eq: moment bound dens} we obtain that for fixed $\alpha>0$ and large enough $T$ it holds for any $p\in[1,\alpha \log(T)]$
	\begin{align*}
		&\E[\Vert \hat{\rho}_{\h^\rho,T}-\E[\hat{\rho}_{\h^\rho,T}]\Vert_{L^\infty(D)}^p]^{1/p}
		\\
		&\leq cT^{-1/2}V(\h^\rho)^{-1}\left(\log(T)^3T^{-1/2}+\log(T)^{1/2}V(\h^\rho)\psi_d(V(\h^\rho))\right),
	\end{align*}
	where $\psi_d$ is defined in \eqref{def: psi}. Together with the standard bias bound under Hölder smoothness assumptions (see, e.g.\ Proposition 1 in \cite{comte2013}) and the specific choice of $\h^\rho$, this yields for large enough $T,$ that for any $p\in[1,\alpha\log(T)]$
	\begin{align}
		&\E[\Vert \hat{\rho}_{\h^\rho,T}-\rho\Vert_{L^\infty(D)}^p]^{1/p}\notag
		\\
		&\leq c\left(\sum_{i=1}^{d}(h_i^\rho)^{\beta_i}+T^{-1/2}V(\h^\rho)^{-1}\left(\log(T)^3T^{-1/2}+\log(T)^{1/2}V(\h^\rho)\psi_d(V(\h^\rho))\right)\right)\notag
		\\
		&\leq c\Phi_{d,\bbeta}(T),\label{eq: cor 2}
	\end{align}
	where we also used $\bar{\bbeta}>d\land 2$.
	Thus for large enough $T,$ Markov's inequality entails for the event $B_T\coloneq\{\Vert \hat{\rho}_{\h^\rho,T}-\rho\Vert_{L^\infty(D)}\leq r(T) \}$ that for any $p\in [1,\alpha\log(T)]$
	\[\P(B_T^\mathsf{c})\leq c^p \exp(- p\sqrt{\log(T)}).  \]
	Hence we obtain if \ref{ass: exp mom} holds
	\begin{align*}
		&\E\left[\Vert(\hat{b}^{(j)}_{\h^b,\h^\rho,T}-b^j)\rho\Vert_{L^\infty(D)}\1_{B^\mathsf{c}_T}  \right]
		\\
		&\leq \E\left[\Vert\hat{b}^{(j)}_{\h^b,\h^\rho,T}\rho\Vert_{L^\infty(D)}\1_{B^\mathsf{c}_T}  \right]+\E\left[\Vert b^j\rho\Vert_{L^\infty(D)}\1_{B^\mathsf{c}_T}  \right]
		\\
		&\leq \E\left[\Vert\hat{b}^{(j)}_{\h^b,\h^\rho,T}\rho\Vert_{L^\infty(D)}^2 \right]^{1/2} c^{p/2}\exp(- p\sqrt{\log(T)}/2)+c^p \exp(- p\sqrt{\log(T)})
		\\
		&\leq r(T)^{-1} c^{p/2} \exp(- p\sqrt{\log(T)}/2)+c^p \exp(- p\sqrt{\log(T)}),
		\\
		&\leq \sqrt{T} c^{p/2} \exp(- p\sqrt{\log(T)}/2)+c^p \exp(- p\sqrt{\log(T)}),
	\end{align*}
	where we used Theorem \ref{thm: general rate exp} in the second to last step, together with the aforementioned bias bound and that $\bar{\bbeta}>d/2$ entails $\h^b\in\mathsf{H}_T$. Choosing $p=5\sqrt{\log(T)}$ then gives for large enough $T$
	\begin{equation}\label{eq: cor 1}
		\E\left[\Vert(\hat{b}^{(j)}_{\h^b,\h^\rho,T}-b^j)\rho\Vert_{L^\infty(D)}\1_{B^\mathsf{c}_T}  \right]
		\leq cT^{-2}.
	\end{equation}
	Furthermore the triangle inequality implies on the event $B_T$ that $\rho/(\vert\hat{\rho}_{\h^\rho,T}\vert +r_T )\leq 1,$ which together with Theorem \ref{thm: general rate exp} gives by arguing as above
	\begin{align*}
		&\E\left[\Vert(\hat{b}^{(j)}_{\h^b,\h^\rho,T}-b^j)\rho\Vert_{L^\infty(D)}\1_{B_T}  \right]
		\\
		&\leq \E\left[\left\Vert\left(\hat{b}^{(j)}_{\h^b,\h^\rho,T}-\frac{b^j\rho}{\vert\hat{\rho}_{\h^\rho,T}\vert+r(T) }\right)\rho\right\Vert_{L^\infty(D)}\1_{B_T}  \right]+\E\left[\left\Vert\left(\frac{b^j\rho}{\vert\hat{\rho}_{\h^\rho,T}\vert+r(T) }-b^j\right)\rho\right\Vert_{L^\infty(D)}\1_{B_T}  \right]
		\\
		&\leq c\left(\left(\frac{\log(T)}{T}\right)^{\frac{\bar{\bbeta}}{2\bar{\bbeta}+d}}+r_T+\Phi_{d,\bbeta}(T) \right),
	\end{align*}
	where we also used \eqref{eq: cor 2}. Combining this with \eqref{eq: cor 1} completes the proof. 
\end{proof}
\section{Proofs for Section \ref{sec: adap}}
Our proofs for the adaptive estimators rely on the following concentration inequalities.
\begin{lemma}\label{lemma: adap}
\begin{enumerate}
	For fixed $\theta>0$ the following holds true:
\item[a)] Let everything be given as in Theorem \ref{thm: general rate exp}. Then for large enough $T$ it holds for any $\h\in\mathscr{H}_T,j\in[d]$
\begin{align*}
	\Pro\left(\sup_{g\in\mathcal{G}_{\h}} \vert \mathbb{I}^j_T(g)-\sqrt{T}\mu(gb^j)\vert\geq T^{1/2}V(\h)A_{T,1}(\h,\theta) \right)&\leq \left(\sum_{i=1}^{d}h_i^{-1} \right)^{-\theta}.
\end{align*}
\item[b)] Let everything be given as in Theorem \ref{thm: rate trunc est}. Then for large enough $T$ it holds for any $\h\in\mathscr{H}_T,j\in[d]$
\begin{align*}
	\Pro\left(\sup_{g\in\mathcal{G}_{\h}} \vert \mathbb{I}^{j,\delta}_T(g)-\sqrt{T}\mu(gb^j)\vert\geq T^{1/2}V(\h)A_{T,2}(\h,\theta,\alpha) \right)&\leq \left(\sum_{i=1}^{d}h_i^{-1} \right)^{-\theta}.
\end{align*}
\end{enumerate}
\end{lemma}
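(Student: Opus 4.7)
The plan is to translate the uniform moment bounds of Section \ref{sec: unif mom} into concentration inequalities via a carefully calibrated application of Markov's inequality.

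For part a), I would recompute the chain of estimates \eqref{eq: moment bound dens}, \eqref{eq: moment bound cont} and \eqref{eq: moment bound jump} from the proof of Theorem \ref{thm: general rate exp}, retaining throughout the explicit numerical constants rather than collapsing them into a generic symbol $c$. For the specific choice $p = \theta \log(\sum_{i=1}^d h_i^{-1})$, which lies in $[1,\infty)$ for large enough $T$ since every $\h \in \mathscr{H}_T$ satisfies $h_i \leq \log(1+T)^{-1}$, the decomposition $\mathbb{I}^j_T(g) = \mathbb{H}_T(gb^j) + \mathbb{M}^j_T(g) + \mathbb{J}^j_T(g)$ together with Minkowski's inequality and Theorem 3.1 of \cite{dexheimeraihp}, Proposition \ref{prop: cont moment bound} and Proposition \ref{prop: jump moment bound} should yield
\[
\E\Big[\sup_{g\in\mathcal{G}_{\h}} |\mathbb{I}^j_T(g) - \sqrt{T}\mu(gb^j)|^p\Big]^{1/p} \leq \frac{T^{1/2} V(\h)\, A_{T,1}(\h,\theta)}{2\e}.
\]
The key observation is that with this choice of $p$, the $\sqrt{p}$-terms arising in Propositions \ref{prop: cont moment bound} and \ref{prop: jump moment bound} become $\sqrt{\theta}\, \log(\sum_{i=1}^d h_i^{-1})^{1/2}$, so the factor $\log(\sum_{i=1}^d h_i^{-1})^{1/2}$ can be pulled out in front together with the analogous factor produced by the entropy integrals (evaluated using the covering number bound \eqref{eq: covering number bound} and the inequality \eqref{eq: sqrt entropy ineq}), leaving $\sqrt{\theta}$ multiplying the $\tilde{c}_2$-block, precisely matching the form of $A_{T,1}$. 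The contribution of $\mathbb{H}_T$ and the subleading $\tilde{c}_1$-terms linear in $p$ are absorbed into the factor $2$ sitting inside $2\e$ thanks to the growth constraint $V(\h) \geq T^{-1/2} \log(\sum_{i=1}^d h_i^{-1})^4$ built into $\mathscr{H}_T$.

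Once this moment bound is in place, Markov's inequality immediately gives
\[
\Pro\Big(\sup_{g\in\mathcal{G}_{\h}} |\mathbb{I}^j_T(g) - \sqrt{T}\mu(gb^j)| \geq T^{1/2} V(\h)\, A_{T,1}(\h,\theta)\Big) \leq (2\e)^{-p} \leq \e^{-p} = \Big(\sum_{i=1}^d h_i^{-1}\Big)^{-\theta},
\]
which is the assertion of part a). Part b) follows by the same argument, but with Lemma \ref{prop: trunc decomp} together with Proposition \ref{prop: trunc chain} in place of Proposition \ref{prop: jump moment bound}, so that the explicit constants $\cE_\alpha$, $\nu_2$, $\Vert \gamma\Vert_\infty$, $\gamma_{\min}$ are seen to line up with the definition of $A_{T,2}$.

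The main obstacle is not conceptual but strictly one of bookkeeping: every numerical coefficient appearing in the definitions of $A_{T,1}$ and $A_{T,2}$, e.g.\ $21$, $29 d\sqrt{\Vert a\Vert_{\infty}}$, $17 c_1$, $3 + 3 c_1$, and the analogous combinations involving the jump-related constants $\cE_\alpha$, $\nu_2$, $\Vert\gamma\Vert_\infty$, $\gamma_{\min}$, must be traced through the various entropy integrals and verified to match the explicit upper bounds coming out of the three relevant propositions. Conceptually, once this accounting is carried out, the passage from the explicit moment bound to the tail bound via Markov's inequality is immediate.
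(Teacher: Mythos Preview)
Your proposal is correct and follows essentially the same approach as the paper: set $p=\theta\log\bigl(\sum_{i=1}^d h_i^{-1}\bigr)$, read off from the proofs of Theorem~\ref{thm: general rate exp} (respectively Theorem~\ref{thm: rate trunc est}) an explicit $p$-th moment bound of the form $\e^{-1}T^{1/2}V(\h)A_{T,\cdot}$, and apply Markov's inequality to obtain $\e^{-p}=\bigl(\sum_{i=1}^d h_i^{-1}\bigr)^{-\theta}$. The only cosmetic difference is that the paper gets the factor $\e^{-1}$ rather than your $(2\e)^{-1}$ in front of the threshold, but since $(2\e)^{-p}\le \e^{-p}$ this does not affect the conclusion.
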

\begin{proof}
	For fixed $\theta>0,$ set $p^*(\h)=\theta \log\left(\sum_{i=1}^{d}h_i^{-1} \right)$ for $\h\in\mathscr{H}_T$ and note that $\mathscr{H}_T\subset \mathsf{H}_T$. Inspecting the proof of Theorem \ref{thm: general rate exp} then gives that under the given assumptions for large enough $T$ it holds for any $\h\in\mathscr{H}_T$ 
	\begin{align*}
		&\E\left[\sup_{g\in\mathcal{G}_{\h}} \vert \mathbb{I}^j_T(g)-\sqrt{T}\mu(gb^j)\vert^{p^*(\h)}\right]^{1/p^*(\h)}
		\\
		&\leq 2V(\h)^{1/2}\log\left(\sum_{i=1}^d h_i^{-1}\right)^{1/2}\Vert K\Vert_\infty\Bigg(C_1\left(21+29d\sqrt{\Vert a\Vert_{\infty}}
		+17c_1 \right) 
		\\&\quad+\sqrt{\theta}\tilde{c}_2\left(3+4\sqrt{\Vert a_{jj}\Vert_\infty }
		+3c_1\right)\Bigg)
		\\
		&=\e^{-1}T^{1/2}V(\h)A_{T,1}(\h,\theta).
	\end{align*}
	Hence, Markov's inequality entails
	\begin{align*}
		\Pro\left(\sup_{g\in\mathcal{G}_{\h}} \vert \mathbb{I}^j_T(g)-\sqrt{T}\mu(gb^j)\vert\geq T^{1/2}V(\h)A_{T,1}(\h,\theta) \right)&\leq \exp(-p^*(\h))=\left(\sum_{i=1}^{d}h_i^{-1} \right)^{-\theta}.
	\end{align*}
	Similarly, the proof of Theorem \ref{thm: rate trunc est} gives that the imposed assumptions entail for large enough $T$ that for any $\h\in\mathscr{H}_T$ it holds
	\begin{align*}
		&\E\left[\sup_{g\in\mathcal{G}_{\h}} \vert \mathbb{I}^{j,\delta}_T(g)-\sqrt{T}\mu(gb^j)\vert^{p^*(\h)}\right]^{1/p^*(\h)}
		\\
		&\leq 2V(\h)^{1/2}\log\left(\sum_{i=1}^d h_i^{-1}\right)^{1/2}\Vert K\Vert_\infty\Bigg(29C_1d\sqrt{\Vert a\Vert_{\infty}}
		+4\sqrt{\theta\Vert a_{jj}\Vert_\infty }\tilde{c}_2
		\\
		&\quad +(1+\cE_\alpha)\left(64C_1 \sqrt{d}+3\sqrt{\theta}\tilde{c}_2\right)
		\\
		&\quad+\frac{\Vert \gamma\Vert_\infty}{\gamma_{\min}}\left(1+\frac{\nu_2\Vert \gamma\Vert_\infty^2}{2}\exp(\alpha)\right)\left(64\sqrt{d}C_1+3\sqrt{\theta}\tilde{c}_2\right)\Bigg)
		\\
		&=\e^{-1}T^{1/2}V(\h)A_{T,2}(\h,\theta,\alpha),
	\end{align*}
	which concludes the proof through Markov's inequality.
\end{proof}
\begin{proof}[Proof of Proposition \ref{thm: adap}]
	In the following we usually suppress the dependencies on $j\in[d]$. For any $\h\in\mathscr{H}_T$ define for $x\in\R^d$
	\begin{align*}
		s_{\h,T}(x)\coloneq \mu(K_{\h}(x-\cdot)b^j(\cdot)),\quad \cS_{\h,T}(x)\coloneq \bar{b}_{\h,T}(x)-s_{\h,T}(x),
	\end{align*}
	and
	\begin{align*}
		s_{\h,\boldsymbol{\eta},T}(x)\coloneq \mu((K_{\h}\star K_{\boldsymbol{\eta}})(\cdot-x)b^j(\cdot)).
	\end{align*}
	Furthermore set
	\begin{align*}
		\zeta_T&\coloneq \sup_{\h\in \cH_T}\left(\Vert\cS^1_{\h,T}\Vert_{\tilde{D},\infty}-A_{T,1}(\h,d)  \right)_+,
	\end{align*}
	where 
	\[\tilde{D}\coloneq \{x\in\R^d\colon d(x,D)\leq 1 \}. \]
	Now for any $\h\in\mathscr{H}_T$ it holds
	\begin{align}\notag
		\Vert \bar{b}_{\hat{\h}_{1,T},T}-b^j\rho\Vert_{D,\infty}&\leq \Vert\bar{b}_{\hat{\h}_{1,T},T}- \bar{b}_{\hat{\h}_{1,T},\h,T}\Vert_{D,\infty}+\Vert\bar{b}_{\hat{\h}_{1,T},\h,T}- \bar{b}_{\h,T}\Vert_{D,\infty}+\Vert \bar{b}_{\h,T}-b^j\rho\Vert_{D,\infty}
		\\\notag
		&\leq \Upsilon_{T,1}(\h)+A_{T,1}(\h,d)+\Upsilon_{T,1}(\hat{\h}_{1,T})+A_{T,1}(\hat{\h}_{1,T},d)+\Vert \bar{b}_{\h,T}-b^j\rho\Vert_{D,\infty}
		\\\notag
		&\leq 2(\Upsilon_{T,1}(\h)+A_{T,1}(\h,d))+\Vert \bar{b}_{\h,T}-b^j\rho\Vert_{D,\infty}
		\\\notag
		&\leq 2(\Upsilon_{T,1}(\h)+A_{T,1}(\h,d))+\Vert\cS_{\h,T}\Vert_{D,\infty}+\cB_{b^j\rho}(\h)
		\\\label{eq: adap 1}
		&\leq 2(\Upsilon_{T,1}(\h)+A_{T,1}(\h,d))+\zeta_T+A_{T,1}(\h,d)+\cB_{b^j\rho}(\h),
	\end{align}
	where we used $\bar{b}_{\hat{\h}_{1,T},\h,T}=\bar{b}_{\h,\hat{\h}_{1,T},T},$ together with the definition of $\hat{\h}_{1,T}$.
	Furthermore Fubini's Theorem for stochastic integrals (see Theorem 65 in \cite{protter2004}) and the symmetry of $K$ give for any $\h,\boldsymbol{\eta}\in\mathscr{H}_T$
	\begin{align*}
		&\Vert\bar{b}_{\h,\boldsymbol{\eta},T}-s_{\boldsymbol{\eta},T} \Vert_{D,\infty}
		\\
		&\leq \Vert\bar{b}_{\h,\boldsymbol{\eta},T}-s_{\h,\boldsymbol{\eta},T} \Vert_{D,\infty}+\Vert s_{\boldsymbol{\eta},T}-s_{\h,\boldsymbol{\eta},T} \Vert_{D,\infty}
		\\
		&=\sup_{x\in D}\left\vert\int_{\R^d} K_{\boldsymbol{\eta}}(u)\frac{1}{T}\int_0^T  K_{\h}(u-(X_{s-}-x))\d X^j_s\d u-\int_{\R^d}K_{\boldsymbol{\eta}}(u) \int_{\R^d} b^j(z)K_{\h}(u-(z-x))\mu(\d z)\d u \right\vert
		\\&\quad +\sup_{x\in D}\left\vert\int_{\R^d}K_{\boldsymbol{\eta}}(x-z)  b^j(z)\mu(\d z)-\int_{\R^d}K_{\boldsymbol{\eta}}(u) \int_{\R^d} b^j(z)K_{\h}(u-(z-x))\mu(\d z)\d u \right\vert
		\\
		&=\sup_{x\in D}\left\vert\int_{\R^d} K_{\boldsymbol{\eta}}(u)\S^1_{\h,T}(u+x)\d u \right\vert
		+\sup_{x\in D}\left\vert\int_{\R^d}K_{\boldsymbol{\eta}}(u)  ((b^j\rho)(u+x)- ((b^j\rho)\ast K_{\h})(u+x))\d u \right\vert
		\\
		&=\Vert K\Vert_{L^1(\R^d)}(\Vert \cS_{\h,T}\Vert_{\tilde{D},\infty}+\cB_{b^j\rho}(\h)),
	\end{align*}
	which implies for any $\h\in\mathscr{H}_T$
	\begin{align*}
		\Upsilon_{T,1}(\h)&\leq \sup_{\eta\in \cH_T}\left(\Vert K\Vert_{L^1(\R^d)}(\Vert \cS_{\h,T}\Vert_{\tilde{D},\infty}+\cB_{b^j\rho}(\h))+\Vert \cS_{\boldsymbol{\eta},T}\Vert_{D,\infty}-A_{T,1}(\boldsymbol{\eta},\ldots) \right)_+
		\\
		&\leq \Vert K\Vert_{L^1(\R^d)}(\zeta_T+A_{T,1}(\h,d)+\cB_{b^j\rho}(\h))+\zeta_T 
		\\
		&\leq (1\lor\Vert K\Vert_{L^1(\R^d)})(2\zeta_T+A_{T,1}(\h,d)+\cB_{b^j\rho}(\h) ).
	\end{align*}
	Combining this with \eqref{eq: adap 1} gives for any $\h\in\mathscr{H}_T$
	\begin{align*}
		&\E\left[\Vert \bar{b}_{\hat{\h}_{1,T},T}-b^j\rho\Vert_\infty \right]
		\leq (1\lor\Vert K\Vert_{L^1(\R^d)})(3\cB_{b^j\rho}(\h) +5A_{T,1}(\h,d)+5\E[\zeta_T]).
	\end{align*}
	Thus it only remains to bound $\E[\zeta_T]$. For this note first that by Hölder's inequality, Lemma \ref{lemma: adap} and \eqref{eq: moment adap} there exists a constant $c>0$ such that for large enough $T$ it holds for any $\h\in\mathscr{H}_T,$ 
	\begin{align*}
		&\E\left[\left(\Vert\cS_{\h,T}\Vert_{\tilde{D},\infty}-A_{T,1}(\h,d)  \right)_+\right]
		\\
		&\leq \sqrt{\E\left[\left(\Vert\cS_{\h,T}\Vert_{\tilde{D},\infty}-A_{T,1}(\h,d)  \right)^2\right]\Pro\left(\Vert\cS_{\h,T}\Vert_\infty\geq A_{T,1}(\h,d)\right)}
		\\
		&\leq c T^{-1/2} V(\h)^{-1/2} \log\left(\sum_{i=1}^{d}h_i^{-1}\right)^{1/2}\left(\sum_{i=1}^{d}h_i^{-1} \right)^{-\theta/2}
		\\
		&= c T^{-1/2}  \log\left(\sum_{i=1}^{d}h_i^{-1}\right)^{1/2}\left(\frac{\left(\prod_{i=1}^dh_i^{-1}\right)^{1/d}}{\sum_{i=1}^{d}h_i^{-1}} \right)^{d/2}
		\\
		&\leq  cd^{-d/2} T^{-1/2}  \log\left(\sum_{i=1}^{d}h_i^{-1}\right)^{1/2},
	\end{align*}
	where we also used the AM--GM inequality in the last step. Hence,
	\begin{align*}
		\E[\zeta_T]&\leq \sum_{\h\in\cH_T}\E\left[\left(\Vert\cS_{\h,T}\Vert_{\tilde{D},\infty}-A_{T,1}(\h,d)  \right)_+\right]
		\\
		&\leq c\operatorname{card}(\cH_T)\sqrt{\frac{\log(T)}{T}}
		\\
		&\leq c \log(T)^d\sqrt{\frac{\log(T)}{T}},
	\end{align*}
	which concludes the proof of the first assertion. The second assertion follows analogously from the results of Section \ref{sec: drift est} and Lemma \ref{lemma: adap}.
\end{proof}
\begin{proof}[Proof of Theorem \ref{thm: adap final}]
	As the proofs of both assertions are almost identical, we omit the proof of the second statement. Fix $j\in[d]$. Noting that for any $\h\in\mathscr{H}_T,x\in D$
	\begin{align*}
		\vert b^j(x)-\hat{b}^{(j)}_{\mathrm{adap},T}(x)\vert
		&\leq \rho_{\min}^{-1}\vert b^j(x)(\hat{\rho}_{\hat{\h}_\rho,T}(x)\lor \rho_{\min})-\bar{b}^{(j)}_{\hat{\h}^j_1,T}(x) \vert 
		\\
		&\leq \rho_{\min}^{-1}\left(\vert b^j(x)(\hat{\rho}_{\hat{\h}_\rho,T}(x)\lor \rho_{\min})-(b^j\rho)(x) \vert+\vert (b^j\rho)(x)-\bar{b}^{(j)}_{\hat{\h}^j_1,T}(x) \vert\right) 
		\\
		&\leq \rho_{\min}^{-1}\left(\Vert b^j\Vert_{L^\infty(D)}\vert \hat{\rho}_{\hat{\h}_\rho,T}(x)-\rho(x) \vert+\vert (b^j\rho)(x)-\bar{b}^{(j)}_{\hat{\h}^j_1,T}(x) \vert\right), 
	\end{align*}
	completes the proof by applying Proposition \ref{thm: adap}, together with the classical anisotropic bias bound, since the optimal bandwidth choices in Corollary \ref{cor: rate} are contained in $\mathscr{H}_T$ if $\bar{\bbeta}>d/2$.
\end{proof}
\printbibliography
\end{document}